\newtheorem{propo}{Proposition}[section]
\newtheorem{corol}[propo]{Corollary}
\newtheorem{theor}[propo]{Theorem}
\newtheorem{lemma}[propo]{Lemma}
\theoremstyle{definition}
\newtheorem{defin}[propo]{Definition}
\newtheorem{examp}[propo]{Example}
\theoremstyle{remark}
\newtheorem{remar}[propo]{Remark}
\numberwithin{equation}{section}
\newcommand{\NN }{\mathbb{N}}
\newcommand{\CC }{\mathbb{C}}
\newcommand{\ZZ }{\mathbb{Z}}
\newcommand{\ii }{\mathrm{i}}
\DeclareMathOperator{\SL}{SL}
\DeclareMathOperator{\md}{mod}
\newcommand{\Fc }{\mathcal F}
\title[Frieze patterns over integers and other subsets of the complex numbers]
{Frieze patterns over integers and other\\ subsets of the complex numbers}
\author{Michael~Cuntz}
\address{Michael Cuntz, Leibniz Universit\"at Hannover,
Institut f\"ur Algebra, Zahlentheorie und Diskrete Mathematik,
Fakult\"at f\"ur Mathematik und Physik,
Welfengarten 1,
D-30167 Hannover, Germany}
\email{cuntz@math.uni-hannover.de}
\urladdr{https://www.iazd.uni-hannover.de/cuntz.html}
\author{Thorsten~Holm}
\address{Thorsten Holm, Leibniz Universit\"at Hannover,
Institut f\"ur Algebra, Zahlentheorie und Diskrete Mathematik,
Fakult\"at f\"ur Mathematik und Physik,
Welfengarten 1,
D-30167 Hannover, Germany}
\email{holm@math.uni-hannover.de}
\urladdr{http://www2.iazd.uni-hannover.de/\~{ }tholm}
\keywords{Cluster algebra, frieze pattern, polygon, quiddity cycle, triangulation}
\subjclass[2010]{05E15, 05E99, 13F60, 51M20}
\begin{document}

\begin{abstract}
We study (tame) frieze patterns over subsets of the complex numbers, with particular emphasis on the corresponding quiddity cycles. We provide new general transformations for quiddity cycles of frieze patterns. As one application, we present a combinatorial model for obtaining the quiddity cycles of all tame frieze patterns over the integers (with zero entries allowed),
generalising the classic Conway-Coxeter theory. This model is thus also a model for the set of specializations of cluster algebras of Dynkin type $A$ in which all cluster variables are integers.

Moreover, we address the question of whether for a given height there are only finitely many non-zero frieze patterns over a given subset $R$ of the complex numbers. Under certain conditions on $R$, we show upper bounds for the absolute values of entries in the quiddity cycles. As a consequence, we obtain that if $R$ is a discrete subset of the complex numbers then for every height there are only finitely many non-zero frieze patterns over $R$. Using this, we disprove a conjecture of Fontaine,
by showing that for a complex $d$-th root of unity $\zeta_d$ there are only finitely many non-zero frieze patterns for a given height over $R=\mathbb{Z}[\zeta_d]$ if and only if $d\in \{1,2,3,4,6\}$.
\end{abstract}

\maketitle

\section{Introduction}

Frieze patterns have been introduced by Coxeter \cite{Cox71}. Shortly afterwards,
Conway and Coxeter presented a beautiful theory for frieze patterns over natural numbers \cite{CC73},
among other things showing that there is a bijection between frieze patterns with entries in $\mathbb{N}$
and triangulations of polygons. This result has recently been generalized to $p$-angulations of polygons
and certain frieze patterns over positive real numbers \cite{HJ17}.

Since the invention of cluster algebras by Fomin and Zelevinsky around 2000, 
frieze patterns have attracted renewed interest because of a close connection to 
cluster algebras. In fact, if one allows Laurent polynomials as entries of a frieze pattern
then starting with a set of indeterminates produces the cluster variables of the cluster algebra of
Dynkin type $A$ as entries in the frieze pattern.

So it is very natural to consider frieze patterns over other 
sets of numbers than integers. In this paper we start out by considering 
frieze patterns with entries in subsets of the field of
complex numbers.
In general there are far too many frieze patterns, see for instance \cite{Cuntz17},
and a general frieze pattern does not share any periodicity properties. Therefore, one usually 
restricts to studying tame frieze patterns \cite{BR10}. This is a very large class of frieze
patterns including basically
all interesting classes studied so far (like friezes corresponding to cluster algebras, the Conway-Coxeter
friezes etc.), sharing nice symmetry properties and allowing unified methods to be applied to study them.
For more details on frieze patterns we refer the reader to a nice survey by Morier-Genoud \cite{MG15}.

However, it is still very subtle to describe all tame frieze patterns 
with entries from a given set $R$ of numbers. For $R=\mathbb{N}$ this is the classic Conway-Coxeter
theory. Frieze patterns over $R=\mathbb{Z}\setminus \{0\}$ have been described by Fontaine
\cite{F14}; here it turns out that only very few new frieze patterns appear in addition to the 
ones over $\mathbb{N}$. The situation changes drastically for tame frieze patterns over $\mathbb{Z}$,
i.e.\ when zeroes are allowed as entries. Then infinitely many new frieze patterns appear. As one of the 
main results of this paper we show how every such frieze pattern can be obtained from a new
combinatorial model. This combinatorial model generalizes the combinatorial model via triangulations from the Conway-Coxeter theory (moreover it can be seen as a special case of the model proposed in
a later paper \cite{C17c}). See Section \ref{sec:Zfriezes} 
for the description of frieze patterns over $\mathbb{Z}$ and Section \ref{sec:combmodel} 
for the corresponding combinatorial model.

As main tools for achieving this, we provide new general transformations on frieze patterns, or more precisely
on quiddity cycles, which might turn out to be useful in other situations as well.
See Section \ref{sec:transform} for precise statements. These transformations substantially generalise
the classic Conway-Coxeter theory, where for inductively 
proving the bijection between frieze patterns over $\mathbb{N}$ and 
triangulations of polygons one only needs transformations which insert/remove an entry 1 in the quiddity
cycle. Over other subsets $R\subseteq \mathbb{C}$, quiddity cycles do not necessarily contain a 1.
However, we can show in Section \ref{sec:bounds} that each quiddity cycle over the complex 
numbers contains an entry (even two entries) of absolute value less than 2; see Corollary 
\ref{cor:small}.

The possible absence of 1's in quiddity cycles is one of the reasons why combinatorially describing all frieze patterns 
over a given set $R$ can be quite hard. It is even a non-trivial problem to decide whether for a given height
there are finitely or infinitely many (non-zero) frieze patterns over $R$.  

As one main result of this paper we provide in Theorem \ref{prop:bound}
a criterion for having finitely many non-zero
frieze patterns of any given height
over certain subsets $R$. As we show in Corollary \ref{cor:discrete}, this criterion implies 
that for any discrete subset $R\subseteq \mathbb{C}$ there are only finitely many non-zero frieze patterns 
over $R$ for each height.

As an application we can disprove a conjecture by Fontaine \cite[Conjecture 6.2]{F14}
on frieze patterns over rings of the form 
$R= \mathbb{Z}[\zeta_d]$ where $\zeta_d\in\CC$ is a primitive $d$-th root of unity.
Namely, we show in Corollary \ref{cor:zeta_d} that there are only finitely many 
non-zero frieze patterns of height $n$ over $\mathbb{Z}[\zeta_d]$ if and only if $d\in\{1,2,3,4,6\}$
(independent of the height of the frieze patterns).  

\medskip

\begin{center}
{\sc Acknowledgement}
\end{center}
We thank Sophie Morier-Genoud for answering questions about some aspects of this paper. 
We are grateful to the referee for a very knowledgable report and many useful suggestions.

\section{Quiddity cycles}

In this section we collect some fundamental definitions and results which are later needed in the 
paper.

\begin{defin} \label{def:frieze}
Let $R\subseteq\CC$ be a subset of the complex numbers.
\smallskip

\noindent
(1) A \emph{frieze pattern} over $R$ 
is an array $\Fc$ of the form
\[
\begin{array}{ccccccccccc}
 & & \ddots & & & &\ddots  & & & \\
0 & 1 & c_{i-1,i+1} & c_{i-1,i+2} & \cdots & \cdots & c_{i-1,n+i} & 1 & 0 & & \\
& 0 & 1 & c_{i,i+2} & c_{i,i+3} & \cdots & \cdots & c_{i,n+i+1} & 1 & 0 & \\
& & 0 & 1 & c_{i+1,i+3} & c_{i+1,i+4} & \cdots & \cdots & c_{i+1,n+i+2} & 1 & 0 \\
 & & & & \ddots  & & & &\ddots  & 
\end{array}
\]
where $c_{i,j}$ are numbers in $R$, and such that every
(complete) adjacent $2\times 2$ submatrix has determinant $1$.
We call $n$ the \emph{height} of the frieze pattern $\Fc$.
We say that the frieze pattern $\Fc$ is
\emph{periodic} with period $m>0$ if $c_{i,j}=c_{i+m,j+m}$ for all $i,j$.
\smallskip

\noindent
(2) A frieze pattern is called {\em tame} if every adjacent $3\times 3$-submatrix has
determinant 0.
\end{defin}

Frieze patterns with non-zero entries are always tame, due to Sylvester's theorem, see for example \cite{Cuntz17}.

Frieze patterns have been introduced by Coxeter \cite{Cox71} and studied further by Conway and Coxeter \cite{CC73}. More precisely, Conway and Coxeter studied frieze patterns over $\mathbb{N}$, i.e.\ frieze patterns with positive
integral entries. If one allows the entries in a frieze pattern to be rational functions over
$\mathbb{Q}$, still observing the local condition on $2\times 2$-determinants, then starting with a set of 
indeterminates, one obtains the cluster variables of Fomin and Zelevinsky's cluster algebras of Dynkin type $A$ as 
entries in the frieze pattern. This is one instance, among others, which shows that for applications
of frieze patterns to other areas of mathematics (e.g.\ geometry, representation theory, integrable systems)
it is useful to allow entries from various (semi-)rings.

\begin{examp} \label{ex:friezes}
\ \\
(1) {\em Conway-Coxeter frieze patterns} are exactly the frieze patterns over $\mathbb{N}$.
An intriguing feature of these frieze patterns is that there is a bijection between 
the frieze patterns of height $n$ and the triangulations of a regular $(n+3)$-gon.
In particular, every frieze pattern over $\mathbb{N}$ of height $n$ is periodic with 
period $n+3$. The following is an example:
\begin{figure}[h]
  \centering
  \begin{minipage}[b]{0.4\textwidth}
 $$   \begin{array}{cccccccccccc}
 &  & \ddots &  &  &  &  &  &  &  &  & \\
 &  &  & &  &  &  &  &  &  &  & \\
 0 & 1 & 1 & 3 & 2 & 1 & 0 &  &  &  &  & \\
& 0 & 1 & 4 & 3 & 2 & 1 & 0 &  & & & \\
 & & 0 & 1 & 1 & 1 & 1 & 1 & 0 & & & \\
 & &  & 0 & 1 & 2 & 3 & 4 & 1 & 0 & & \\
 & &  &  & 0 & 1 & 2 & 3 & 1 & 1 & 0 & \\
 & &  &  &  & 0 & 1 & 2 & 1 & 2 & 1 & 0 \\
 &  &  &  &  &  &  &  &  & \ddots &  &
\end{array}
$$
  \end{minipage}
  \hfill
  \begin{minipage}[b]{0.4\textwidth}
    \begin{tikzpicture}[auto,baseline=(s.center)]
    \node[name=s, draw, shape=regular polygon, regular polygon sides=6, minimum size=2.8cm] {};
    \draw[thick] (s.corner 1) to (s.corner 3);
    \draw[thick] (s.corner 6) to (s.corner 3);
    \draw[thick] (s.corner 5) to (s.corner 3);
   \draw[shift=(s.corner 1)]  node[above]  {{\small 2}};
  \draw[shift=(s.corner 2)]  node[above]  {{\small 1}};
  \draw[shift=(s.corner 3)]  node[left]  {{\small 4}};
  \draw[shift=(s.corner 4)]  node[below]  {{\small 1}};
  \draw[shift=(s.corner 5)]  node[below]  {{\small 2}};
  \draw[shift=(s.corner 6)]  node[right]  {{\small 2}};
   \end{tikzpicture}
  \end{minipage}
\end{figure}
The numbers at the vertices of the hexagon are the numbers of triangles attached; and these numbers 
(in counterclockwise order) yield the first diagonal in the corresponding frieze pattern on the left.    
\smallskip

\noindent
(2) The array
\[ \begin{array}{rrrrrrrrrrrr}
&  &  & \ddots &  &  &  &  & & & & \\
0 & 1 & -\ii + 1 & 1 & \ii + 1 & 1 & 0 &    &    &    &    &   \\
   & 0 & 1 & \ii + 1 & 2\ii + 1 & 2 & 1 & 0 &    &    &    &   \\
   &    & 0 & 1 & 2 & -2\ii + 1 & -\ii + 1 & 1 & 0 &    &    &   \\
   &    &    & 0 & 1 & -\ii + 1 & 1 & \ii + 1 & 1 & 0 &    &   \\
   &    &    &    & 0 & 1 & \ii + 1 & 2\ii + 1 & 2 & 1 & 0 &   \\
   &    &    &    &    & 0 & 1 & 2 & -2\ii + 1 & -\ii + 1 & 1 & 0\\
&  &  &  &  &  &  &  &\ddots & & &
\end{array} \]
repeated infinitely many times to both sides,
is a frieze pattern over the Gaussian integers $\ZZ[\ii]$; it is periodic with period 6.
\smallskip

\noindent
(3)
The fact that the frieze patterns in the above example are periodic, follows from some general 
results on frieze patterns. In fact, if all entries $c_{i,j}$ in a frieze pattern of height $n$
are non-zero, then the frieze pattern is periodic with period $n+3$; see Proposition \ref{perck}
below for details. A frieze pattern with zero entries might not be periodic at all. For instance,
for every sequences $(a_i)_{i\in\mathbb{Z}}$ and $(b_i)_{i\in\mathbb{Z}}$ we have a frieze
pattern of the form
\[
\begin{array}{cccccccccccc}
 &  &  & \ddots &  &  &  &  &  &  &  & \\
 &  &  & &  &  &  &  &  &  &  & \\
 0 & 1 & a_1 & -1 & b_1 & 1 & 0 &  &  &  &  & \\
& 0 & 1 & 0 & -1 & 0 & 1 & 0 &  & & & \\
 & & 0 & 1 & a_2 & -1 & b_2 & 1 & 0 & & & \\
 & &  & 0 & 1 & 0 & -1 & 0 & 1 & 0 & & \\
 & &  &  & 0 & 1 & a_3 & -1 & b_3 & 1 & 0 & \\
 & &  &  &  & 0 & 1 & 0 & -1 & 0 & 1 & 0 \\
 &  &  &  &  &  &  &  &  & \ddots &  &
\end{array}
\]
\end{examp}

It follows directly from the definition that every non-zero frieze pattern is uniquely determined 
by the entries $c_{i,i+2}$ in the first diagonal of the frieze pattern (cf.\ Definition \ref{def:frieze});
in fact, if all entries are non-zero then one can use the condition for the $2\times 2$-determinants
to be 1 to compute successively the second diagonal, the third diagonal etc.
(But note that the above Example \ref{ex:friezes}\,(3) shows that this is no longer true 
if zero entries appear.) 
\smallskip

It is a priori not clear which sequences of numbers actually yield a frieze pattern. For 
getting a criterion we need the following matrices which play a crucial role in the theory of 
frieze patterns:

\begin{defin} \label{def:etamatrix}
For $c\in \CC$, let
$\eta(c) = \begin{pmatrix} c & -1 \\ 1 & 0 \end{pmatrix}$. Moreover, for any sequence
$c_1,\ldots,c_{\ell}$ of complex numbers and any $1\le i-1\le j\le \ell$ we set 
$M_{i,j} = \prod_{k=i}^{j} \eta(c_k).
$
\end{defin}

The following result collects several fundamental properties of the above matrices and explains
their importance in the context of frieze patterns. These results are known but not well 
documented in the literature, thus we give a precise statement and include a proof.
\smallskip

As usual, for any matrix $A$ we denote by $A_{r,s}$ the entry in row $r$ and column $s$.

\begin{propo}\label{perck} 
Let $R\subseteq \mathbb{C}$ be a subset.  
\begin{enumerate}
\item
Let $\Fc$ be a frieze pattern over $R$ of height $n$, with entries $c_{i,j}$ as in Definition
\ref{def:frieze}. We set  
$c_k:=c_{k,k+2}$ for $k\in\ZZ$.
If 
$\Fc$ is a tame frieze pattern then $\Fc$ is
periodic with period $m=n+3$.
Furthermore,
\[
\prod_{k=1}^{m} \eta(c_k) = 
\begin{pmatrix} -1 & 0 \\ 0 & -1 \end{pmatrix},
\quad\text{and}\quad
c_{i,j+2} = (M_{i,j})_{1,1}. \]
(Notice that we may assume $j\ge i$ in the product defining $M_{i,j}$ by possibly adding multiples of $2m$ to $j$.)
\item Suppose that $(c_1,\ldots,c_m)\in R^m$ satisfies 
$\prod_{k=1}^{m} \eta(c_k) = \begin{pmatrix} -1 & 0 \\ 0 & -1 \end{pmatrix}$. We define 
$c_k$ for all $k\in\mathbb{Z}$ by repeating the sequence $(c_1,\ldots,c_m)$ periodically.  
Then the array
\[ (a_{i,j+2})_{i,j} = ((M_{i,j})_{1,1})_{i,j} \]
(where $i-1\le j\le m+i-3$)
defines a periodic frieze pattern over $\mathbb{C}$ with period $m$ and height $m-3$.
If $R$ is a ring, then the entries of this frieze pattern are in $R$.
Moreover, this frieze pattern is tame.  
\end{enumerate}
\end{propo}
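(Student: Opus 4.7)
The core tool in both directions is that the matrix $\eta(c)$ encodes the two-step linear recurrence $x_{j+1} = c\, x_j - x_{j-1}$ through $(x_{j+1}, x_j)^T = \eta(c)(x_j, x_{j-1})^T$, so iterated products of $\eta$-matrices applied to $(1, 0)^T$ produce continuants. The whole proof amounts to matching this algebraic structure with the combinatorial structure of the frieze.

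For part (1), the first task is to extract from tameness the continuant recurrence
\[
c_{i, j+1} \;=\; c_{j-1}\, c_{i, j} \;-\; c_{i, j-1}
\]
for the entries of each row, where $c_k = c_{k, k+2}$. Expanding an adjacent $3 \times 3$ submatrix along its first row, with the outer $2 \times 2$ cofactors collapsing to $1$, produces a recurrence of this form with coefficient $\beta_j$ independent of $i$; evaluating on a suitable near-boundary row identifies $\beta_j = c_{j-1}$. Combined with the left boundary $c_{i, i} = 0,\; c_{i, i+1} = 1$, induction on $j - i$ then yields $c_{i, j+2} = (M_{i, j})_{1, 1}$, and a parallel induction gives the companion formula $(M_{i, j})_{2, 1} = c_{i+1, j+2}$. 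To close into the matrix identity, I plug the right boundary $c_{i, i+n+2} = 1,\; c_{i, i+n+3} = 0$ into these formulas: this forces the first row of $M_{i, i+n+1}$ to equal $(0, -1)$, and multiplying by $\eta(c_{i+n+2})$ gives the first row of $M_{i, i+n+2}$ equal to $(-1, 0)$. Applying the same argument to row $i+1$ and using the factorization $M_{i+1, i+n+2} = \eta(c_i)^{-1} M_{i, i+n+2}$ then pins the second row of $M_{i, i+n+2}$ as $(0, -1)$. Hence $M_{i, i+n+2} = -I$ for every $i$, which with $m = n + 3$ is the asserted matrix equation. Periodicity $c_k = c_{k+m}$ drops out by comparing $M_{i, i+m-1} = -I = M_{i+1, i+m}$ to equate $\eta(c_i) = \eta(c_{i+m})$, and periodicity of the full frieze then follows from the formula $c_{i, j+2} = (M_{i, j})_{1, 1}$.

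For part (2), I set $a_{i, j+2} := (M_{i, j})_{1, 1}$ and declare $a_{i, i} = 0$ by hand. A cyclic conjugation argument promotes $\prod_{k=1}^m \eta(c_k) = -I$ to $M_{i, i+m-1} = -I$ for every $i$; direct $2 \times 2$ matrix computation then yields the right boundary $a_{i, i+m-1} = 1$ and $a_{i, i+m} = 0$ (the left boundary $a_{i, i+1} = 1$ is the empty product). The $2 \times 2$ determinant condition reduces, via the companion identity $(M_{i, j})_{2, 1} = a_{i+1, j+2}$ and the factorization $M_{i, j} = M_{i, j-1}\, \eta(c_j)$, to $\det M_{i, j-1} = 1$, which holds because each $\eta(c_k)$ has determinant $1$. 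Tameness is automatic: the relation $M_{i+1, j} = \eta(c_i)^{-1} M_{i, j}$ gives the three-row linear identity $a_{i+2, k} = c_i\, a_{i+1, k} - a_{i, k}$, so any three consecutive rows are linearly dependent and every adjacent $3 \times 3$ submatrix has rank at most $2$. Finally, $(M_{i, j})_{1, 1}$ is a continuant polynomial in $c_i, \ldots, c_j$ with integer coefficients, so the entries lie in $R$ whenever $R$ is a ring.

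The main obstacle is the identification $\beta_j = c_{j-1}$ in Part (1) when zero entries are allowed: the classical argument divides by $c_{i, j}$, which is unavailable here, so the $3 \times 3$ submatrix and the evaluation row must be chosen carefully so that the derivation goes through uniformly. Once this row recurrence is in place, the rest is essentially formal manipulation of $2 \times 2$ matrices.
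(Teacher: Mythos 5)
Your proposal follows essentially the same route as the paper's proof: tameness together with the unit $2\times 2$ determinants gives the three-term recurrence between consecutive columns with coefficient depending only on the column index, the boundary entries $0,1$ identify that coefficient with the quiddity entry and force $M_{i,i+m-1}=-I$ (hence periodicity), and in part (2) the determinant, boundary-of-ones and tameness checks are the same manipulations of $M_{i,j}$ (you use the row relation $a_{i,k}+a_{i+2,k}=c_i\,a_{i+1,k}$ where the paper uses the analogous dependence of three consecutive columns, and you evaluate at a near-boundary row where the paper instead extends the frieze by a diagonal of $-1$'s). The only point you flag without resolving — why the recurrence coefficient is independent of the row index when zero entries are allowed — is treated with the same brevity in the paper and is settled by the standard overlap argument (two consecutive entries of a column cannot both vanish since adjacent $2\times2$ determinants equal $1$), so your sketch matches the paper's argument in both structure and level of detail.
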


\begin{proof}
(1)
Let $\Fc=(c_{i,j})$ be a frieze pattern over $R$.
Consider an adjacent $3\times 3$-submatrix $M$ of $\Fc$.
The first two columns of $M$ cannot be linearly dependent because the upper left $2\times 2$-submatrix has determinant $1$.
But then since $\Fc$ is tame, the determinant of $M$ is zero, so
\[ M =
\begin{pmatrix}
a & b & sa+tb \\
c & d & sc+td \\
e & f & se+tf
\end{pmatrix}
\]
for suitable $a,b,c,d,e,f,s,t$.
Now the fact that all adjacent $2\times 2$-determinants are $1$ implies
$$1=b(sc+td)-d(sa+tb)=s(bc-ad)=-s,$$
so $s=-1$.
Thus setting $a=c_{i,j}$, $b=c_{i,j+1}$
we see that for fixed $j$, there is a $t_j$ such that
\begin{equation}\label{nextcol}
\eta(t_j) \begin{pmatrix} c_{i,j+1} \\ c_{i,j} \end{pmatrix} =
\begin{pmatrix} -c_{i,j}+t_j c_{i,j+1} \\ c_{i,j+1} \end{pmatrix} =
\begin{pmatrix} c_{i,j+2} \\ c_{i,j+1} \end{pmatrix}
\end{equation}
for all $i$.
If we extend the frieze by a row of $0$'s and $-1$'s on both sides, then the $\SL_2$-condition is still satisfied and in each consecutive pair of rows we find
$\begin{pmatrix} 0 & 1 \\ -1 & 0 \end{pmatrix}$ at the beginning and $\begin{pmatrix} 0 & -1 \\ 1 & 0 \end{pmatrix}$ at the end of the frieze pattern.
Since next to the left matrix there is $\begin{pmatrix} 1 & c_{i,i+2} \\ 0 & 1 \end{pmatrix}$, Equation (\ref{nextcol}) implies $t_i=c_{i,i+2}$.
Moreover, from this it is clear that the product of the matrices $\eta(c_{k,k+2})$, $k=i,\ldots,i+m-1$ (from left to right) in each row $i$ is the negative of the identity matrix.
Further, by Equation (\ref{nextcol}), the entries in the frieze pattern are just the top left entries in the intermediate products.
For the periodicity notice that by the above consideration, comparing two consecutive rows gives sequences $t_1,\ldots,t_m$ and $t_2,\ldots,t_{m+1}$ such that
\[ \prod_{i=1}^m \eta(t_i) = \prod_{i=2}^{m+1} \eta(t_i) = 
\begin{pmatrix} -1 & 0 \\ 0 & -1 \end{pmatrix}, \]
which implies $t_1=t_{m+1}$ and hence $c_{i,i+2}=c_{i+m,i+m+2}$ for all $i$. Since all entries in $\Fc$ are determined by these numbers, the whole frieze is periodic.
\smallskip

\noindent
(2) There are several items to check, namely the determinants of the adjacent $2\times 2$-submatrices, the tameness,
the height (i.e.\ that the pattern ends with a diagonal of 1's), and the periodicity.

We start by considering the determinants of adjacent 
$2\times 2$-submatrices.
Because of $$\eta(c_i) M_{i+1,j} = M_{i,j}$$ we have $(M_{i+1,j})_{1,1}=(M_{i,j})_{2,1}$. Similarly, we obtain
\begin{equation}\label{entries}
(M_{i+1,j})_{1,1}=-(M_{i+1,j+1})_{1,2}=(M_{i,j})_{2,1}=-(M_{i,j+1})_{2,2}.
\end{equation}
From this we get
\begin{eqnarray*}
\det \begin{pmatrix} a_{i,j+1} & a_{i,j+2} \\ a_{i+1,j+1} & a_{i+1,j+2} \end{pmatrix}
&=& \det \begin{pmatrix} (M_{i,j-1})_{1,1} & (M_{i,j})_{1,1} \\ (M_{i+1,j-1})_{1,1} & (M_{i+1,j})_{1,1} \end{pmatrix} \\
&=& \det \begin{pmatrix} -(M_{i,j})_{1,2} & (M_{i,j})_{1,1} \\ -(M_{i,j})_{2,2} & (M_{i,j})_{2,1} \end{pmatrix}
= \det M_{i,j} = 1.
\end{eqnarray*}
Thus, the proposed array satisfies the
condition for the entries of a frieze pattern.
Equation (\ref{entries}) and $M_{i,j+1}=M_{i,j}\eta(c_{j+1})$ also give
\begin{eqnarray*}
\begin{pmatrix}
a_{i,j+1} & a_{i,j+2} & a_{i,j+3}
\end{pmatrix}
&=&
\begin{pmatrix}
-(M_{i,j})_{1,2} & (M_{i,j})_{1,1} & (M_{i,j+1})_{1,1}
\end{pmatrix}
\\ &=&
\begin{pmatrix}
-(M_{i,j})_{1,2} & (M_{i,j})_{1,1} & (M_{i,j})_{1,2} + c_{j+1}(M_{i,j})_{1,1}
\end{pmatrix}.
\end{eqnarray*}
This shows that any three consecutive columns in the array are linear dependent,
hence the frieze pattern is tame.

For showing that the array is a frieze pattern of height $m-3$, we have to show that 
the pattern has a bounding diagonal of 1's. With the labelling
as in Definition \ref{def:frieze} this means that we have to show that
$a_{i,i+1}=1$ and $a_{i,i+m-1}=1$. But $a_{i,i+1}=(M_{i,i+2m-1})_{1,1}=1$ because this is the top left entry in $(\prod_{k=1}^{m} \eta(c_k))^2 = \begin{pmatrix} 1 & 0 \\ 0 & 1 \end{pmatrix}$. Moreover, $a_{i,i+m-1}=(M_{i,i+m-3})_{1,1}=1$ because
\begin{eqnarray*}
M_{i,i+m-3} & = & \prod_{k=i}^{i+m-3} \eta(c_k)
=  \left(\prod_{k=i}^{i+m-1} \eta(c_k)\right)\eta(c_{i+m-1})^{-1}\eta(c_{i+m-2})^{-1} \\
& = & - \eta(c_{i+m-1})^{-1}\eta(c_{i+m-2})^{-1} = 
\begin{pmatrix} 1 & -c_{i+m-2} \\ c_{i+m-1} & 1-c_mc_{i+m-2} \end{pmatrix},
\end{eqnarray*}
so we get the desired diagonal of 1's.

Finally, the periodicity follows immediately from the periodicity of the sequence $(c_k)_{k\in \mathbb{Z}}$.
\end{proof}

\begin{defin} \label{def:quiddity}
Let $R\subseteq \mathbb{C}$ be a subset.
A \emph{quiddity cycle} over $R$ is a sequence $(c_1,\ldots,c_m)\in R^m$ 
satisfying
\begin{equation}\label{etaid}
\prod_{k=1}^{m} \eta(c_k) = \begin{pmatrix}-1 & 0 \\ 0 & -1 \end{pmatrix}.
\end{equation}
To each quiddity cycle we have a {\em corresponding tame frieze pattern} by Proposition 
\ref{perck}.
\end{defin}

\begin{remar} \label{rem:qcreverse}
Let $(c_1,\ldots,c_m)\in R^m$ be a quiddity cycle.
\smallskip

\noindent
(1) The rotated cycle $(c_m,c_1,\ldots,c_{m-1})$ and
the reversed cycle $(c_m,c_{m-1},\ldots,c_1)$ are again quiddity cycles.

In fact, the first assertion easily follows from the fact that the negative identity matrix
commutes with every matrix. The second already appeared in \cite[Proposition 5.3\,(3)]{CH09}; for
completeness we include the argument. Let $\tau=\begin{pmatrix} 0 & 1 \\ 1 & 0 \end{pmatrix}$;
note that $\tau^2$ is the identity matrix and that 
for every $c\in R$ we have $\tau \eta(c) \tau = \begin{pmatrix} 0 & 1 \\ -1 & c \end{pmatrix} = 
\eta(c)^{-1}$. It follows that
\begin{eqnarray*}
\eta(c_m)\eta(c_{m-1})\ldots \eta(c_1) & = & (\tau\eta(c_m)^{-1}\tau)(\tau\eta(c_{m-1})^{-1}\tau)\ldots
(\tau\eta(c_1)^{-1}\tau) \\ 
& = & \tau(\eta(c_1)\ldots \eta(c_{m-1})\eta(c_m))^{-1} \tau 
 = \begin{pmatrix} -1 & 0 \\ 0 & -1 \end{pmatrix}
\end{eqnarray*}
(where in the last equation we have used that $(c_1,\ldots,c_m)$ is a quiddity cycle).
\smallskip

\noindent
(2) We have $\prod_{i=1}^m \eta(-c_i) = (-1)^{m+1} \begin{pmatrix} 1 & 0 \\ 0 & 1 \end{pmatrix}$.
In particular, if $m$ is even, $(-c_1,\ldots,-c_m)$ is again a quiddity cycle.

In fact, set $T=\begin{pmatrix} \ii & 0 \\ 0 & -\ii \end{pmatrix}$, where $\ii\in\mathbb{C}$ is the 
imaginary unit. A direct computation shows that 
$T\eta(c)T=\eta(-c)$ for all $c\in \mathbb{C}$. Moreover, since $T^2$ is the negative of the identity  
matrix, we get
\begin{eqnarray*} 
\prod_{k=1}^m \eta(-c_k) & = & \prod_{k=1}^m T\eta(c_k)T = (-1)^{m-1} T (\prod_{k=1}^m \eta(c_k)) T
 =  (-1)^m T^2 = (-1)^{m+1} \begin{pmatrix} 1 & 0 \\ 0 & 1 \end{pmatrix}.
\end{eqnarray*}
\end{remar}

\begin{examp}
\label{ex:quiddity}
From the definition of the matrices $\eta(c)$ it is clear that there is no
quiddity cycle of length $m=1$. A straightforward calculation yields 
$$\eta(c_1)\eta(c_2) = \begin{pmatrix} c_1c_2-1 & -c_1 \\ c_2 & -1 \end{pmatrix}
$$
from which it follows that the only quiddity cycle of length $m=2$ is $(0,0)$.
For $m=3$ we compute
$$ \eta(c_1)\eta(c_2)\eta(c_3) = 
\begin{pmatrix} c_1c_2c_3-c_3-c_1 & -c_1c_2+1 \\ c_2c_3-1 & -c_2 \end{pmatrix}
$$
and deduce that the only quiddity cycle of length $m=3$ is $(1,1,1)$. For $m=4$ 
we obtain
$$ \eta(c_1)\eta(c_2)\eta(c_3)\eta(c_4) = 
\begin{pmatrix} c_1c_2c_3c_4-c_1c_4-c_3c_4-c_1c_2+1 & -c_1c_2c_3+c_1+c_3 \\ 
c_2c_3c_4-c_4-c_2 & -c_2c_3+1 \end{pmatrix}.
$$
For this to become the negative identity matrix we get $c_2c_3=2$ (from the $(2,2)$-entry)
and then $c_1=c_3$ and $c_2=c_4$ from the off-diagonal entries; with these conditions
the $(1,1)$-entry becomes $-1$. Hence the quiddity
cycles of length $m=4$ are precisely $(c_1,2c_1^{-1},c_1,2c_1^{-1})$ with $c_1\neq 0$.
\end{examp}

\begin{remar} \label{rem:quiddity}
For fixed $n\in\NN$, there may be different periodic frieze patterns 
with period $m=n+3$ and of height $n$ but with the same quiddity cycle: the array
\[
\begin{array}{ccccccccccc}
 &  & \ddots  & &  &  &  &  &  & & \\
 &  &  &  & &  &  &  &  & & \\
1 & a & -1 & d & 1 & 0 &  &  &  & & \\
0 & 1 & 0 & -1 & 0 & 1 & 0 & & &  & \\
 & 0 & 1 & b & -1 & e & 1 & 0 &  &  & \\
 &  & 0 & 1 & 0 & -1 & 0 & 1 & 0 &  & \\
 &  & & 0 & 1 & c & -1 & f & 1 & 0 & \\
 &  &  &  & 0 & 1 & 0 & -1 & 0 & 1 & 0 \\
 &  &  &  &  &  &  &  & \ddots & &
\end{array}
\]
(repeated to both sides) is a frieze pattern for arbitrary $a,b,c,d,e,f$, and it is periodic with period 6.
On the other hand, a direct matrix calculation shows that 
$$\eta(a)\eta(0)\eta(b)\eta(0)\eta(c)\eta(0) = 
\begin{pmatrix} -1 & -a-b-c \\ 0 & -1
\end{pmatrix}.
$$
By Definition \ref{def:quiddity},
the sequence $(a,0,b,0,c,0)$ is a quiddity cycle if and only if $a+b+c=0$.
So for example if $a=1$, $b=1$, $c=-2$, then $(a,0,b,0,c,0)$ is a quiddity cycle and the above array is a frieze pattern for arbitrary $d,e,f$, but tame only if $a=e$, $d=c$, $b=f$.
\end{remar}


\section{Bounds and finiteness}
\label{sec:bounds}

When considering frieze patterns over a subset $R\subseteq \mathbb{C}$, one of the most
fundamental questions is whether for a given $n\in \mathbb{N}$ there are finitely or infinitely
many frieze patterns over $R$ of height $n$. We have seen above (e.g.\ in Remark \ref{rem:quiddity})
that if zeroes are allowed as entries then usually one will get infinitely many frieze patterns
and the situation is hard to control. Therefore, for the questions of finiteness addressed
in this section we shall restrict to non-zero frieze patterns.

If for a certain subset $R$ there are only finitely many non-zero frieze patterns over $R$ of any
height $n$ then the additional
question arises whether one can find a useful combinatorial model of these frieze patterns.

Both questions are very hard in general and so far have only received answers in very few
cases. In fact, for $R=\mathbb{N}$ the classic Conway-Coxeter results \cite{CC73}
yield a bijection
between frieze patterns over $\mathbb{N}$ of height $n$ and triangulations of the regular
$(n+3)$-gon. In particular, the frieze patterns over $\mathbb{N}$ are counted by the 
famous Catalan numbers.

This result has been extended by Fontaine \cite{F14} to the case $R=\mathbb{Z}\setminus\{0\}$,
here only few new frieze patterns appear; more precisely, if $n$ is even
then every non-zero frieze pattern of height $n$ 
over the integers is a Conway-Coxeter frieze pattern 
and if $n$ is odd then there are twice as many non-zero frieze
patterns of height $n$ over the integers as Conway-Coxeter frieze patterns, and the new ones are 
obtained by multiplying in the Conway-Coxeter frieze patterns every second row by $-1$.

In this section we will provide rather general criteria for when there are
only finitely many non-zero frieze patterns over a subset $R\subseteq \mathbb{C}$.

As mentioned above, a non-zero frieze pattern is uniquely determined by its quiddity 
cycle. Our approach here is to guarantee small entries and to give upper bounds for all 
entries in a quiddity cycle. For this we use the absolute value $|\cdot|$ of complex
numbers and its well-known properties, without further mentioning.
\bigskip

The following useful lemma is inspired by old results on continued fractions.

\begin{lemma}\label{bound2}
Let $c_1,\ldots,c_m,d,e \in \CC$ with
$|c_m|\ge 1$ and
$|c_1e-d|>|e|$; moreover suppose that
\[ \prod_{j=1}^m \eta(c_j) = \begin{pmatrix} d & * \\ e & * \end{pmatrix}. \]
(Here the $*$'s denote arbitrary entries, not necessarily the same.)
Then there exists an index $j\in\{2,\ldots,m-1\}$ with $|c_j|<2$.
\end{lemma}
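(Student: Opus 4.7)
\emph{Proof plan.} The approach is to encode the first column of $\prod_{j=1}^m \eta(c_j)$ via the classical continuant sequence read from the right end of $(c_1,\ldots,c_m)$. I would define $r_{m+2}:=0$, $r_{m+1}:=1$, and then, descending, $r_j := c_j r_{j+1} - r_{j+2}$ for $j=m,m-1,\ldots,1$. A short induction on $m$ (grouping the product as $\eta(c_1)\cdot\prod_{j=2}^m\eta(c_j)$) shows that the first column of $\prod_{j=1}^m \eta(c_j)$ equals $\begin{pmatrix} r_1 \\ r_2 \end{pmatrix}$; in particular $d=r_1$ and $e=r_2$.

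With this in place the hypotheses translate cleanly. The recurrence at $j=1$ gives $r_1 = c_1 r_2 - r_3$, hence $c_1 e - d = r_3$, and the hypothesis $|c_1 e - d| > |e|$ becomes $|r_3| > |r_2|$. Likewise $r_m = c_m\cdot 1 - 0 = c_m$, so $|c_m|\geq 1 = |r_{m+1}|$ says $|r_m|\geq |r_{m+1}|$.

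The conclusion then follows by contradiction. Assume that $|c_j|\geq 2$ for every $j\in\{2,\ldots,m-1\}$, and prove by descending induction on $j$, from $j=m$ down to $j=2$, that $|r_j|\geq |r_{j+1}|$. The base case $|r_m|\geq|r_{m+1}|$ is the translated hypothesis. For the inductive step, from $|r_{j+1}|\geq |r_{j+2}|$ and $|c_j|\geq 2$ the recurrence and the triangle inequality yield
$$|r_j| \;=\; |c_j r_{j+1} - r_{j+2}| \;\geq\; |c_j|\,|r_{j+1}| - |r_{j+2}| \;\geq\; 2|r_{j+1}| - |r_{j+1}| \;=\; |r_{j+1}|.$$
Iterating down to $j=2$ gives $|r_2|\geq |r_3|$, contradicting $|r_3|>|r_2|$.

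The only real conceptual step is recognising that the continuants of the tails $c_j,c_{j+1},\ldots,c_m$ are the natural quantities to track; the hint in the lemma statement about continued fractions points directly to this choice, after which both the translation of the hypotheses and the monotonicity induction are routine. One small sanity check worth making is that the case $m=2$ is vacuous: then the product is $\bigl(\begin{smallmatrix} c_1c_2-1 & -c_1 \\ c_2 & -1 \end{smallmatrix}\bigr)$, so $c_1e-d=1$, and the two hypotheses together would force $1 > |c_2| \geq 1$.
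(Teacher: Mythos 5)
Your proof is correct and is essentially the paper's argument in different notation: your continuants $r_j$ are exactly the first-column entries of the partial products $\prod_{k=j}^{m}\eta(c_k)$ that the paper propagates as vectors, and your inductive inequality $|c_jr_{j+1}-r_{j+2}|\ge|r_{j+1}|$ is the paper's key observation that multiplication by $\eta(c)$ with $|c|\ge 2$ preserves the dominance of the first entry. The translation of the hypotheses ($e=r_2$, $c_1e-d=r_3$) and the final contradiction match the paper's proof step for step.
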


\begin{remar}
One might wonder that the conclusion of the lemma only applies if $m\ge 3$. In fact,
for $m=1$ and $m=2$ the assumptions of the lemma can not be satisfied. For $m=1$ one would get 
$e=1$ and $d=c_1$ and then $|c_1e-d|=0$. For $m=2$ one has
$$\eta(c_1)\eta(c_2) = \begin{pmatrix} c_1c_2-1 & -c_1 \\ c_2 & -1 \end{pmatrix},
$$
thus $e=c_2$ and $d=c_1c_2-1$. But then the assumptions would yield 
$1\le |c_2|=|e| < |c_1e-d|= 1$, a contradiction.
\end{remar}

\begin{proof}
Let $a,b\in \CC$ with $|a|\ge |b|$ and $|c|\ge 2$. Then
\[ |ac-b|\ge |ac|-|b| = |a|(|c|-1)+|a|-|b| \ge |a|(|c|-1)\ge |a|. \]
From this inequality and
\begin{equation} \label{eq:eta}
\eta(c)\begin{pmatrix} a \\ b \end{pmatrix} = 
\begin{pmatrix} c & -1 \\ 1 & 0 \end{pmatrix} \begin{pmatrix} a \\ b \end{pmatrix} =
\begin{pmatrix} ac-b \\ a \end{pmatrix}, 
\end{equation}
we see that multiplying vectors in $\CC^2$ from the left with $\eta(c)$, where $|c|\ge 2$,
preserves the property that the absolute value of the first entry is greater or equal to the absolute value of the second entry.

Now
\[ \eta(c_m)\begin{pmatrix} 1 \\ 0 \end{pmatrix} = 
\begin{pmatrix} c_m & -1 \\ 1 & 0 \end{pmatrix} \begin{pmatrix} 1 \\ 0 \end{pmatrix} = 
\begin{pmatrix} c_m \\ 1 \end{pmatrix}.
\]
From this and the assumption on the shape of $\prod_{j=1}^m \eta(c_j)$ we get
\begin{eqnarray*}
\left(\prod_{j=2}^{m-1} \eta(c_j)\right) \begin{pmatrix} c_m \\ 1 \end{pmatrix}
=  \eta(c_1)^{-1} \begin{pmatrix} d & * \\ e & * \end{pmatrix} 
\begin{pmatrix} 1 \\ 0 \end{pmatrix} 
=  \begin{pmatrix} 0 & 1 \\ -1 & c_1 \end{pmatrix} \begin{pmatrix} d \\ e \end{pmatrix} 
=  \begin{pmatrix} e \\ c_1e-d \end{pmatrix}.
\end{eqnarray*}

Note that by assumption we have $|c_m|\ge 1$ and $|e|< |c_1e-d|$.
Thus $|c_2|,\ldots,|c_{m-1}|$ all greater or equal to $2$ would contradict
the property stated after Equation (\ref{eq:eta}).
\end{proof}

In a special case we can draw a stronger conclusion which will turn out to be important for some
later applications. Note that in particular the following corollary applies in the case of
quiddity cycles (cf.\ Definition \ref{def:quiddity}).

\begin{corol} \label{cor:small}
Let $(c_1,\ldots,c_m)\in \CC^m$ such that $\prod_{j=1}^m \eta(c_j)$ is a scalar multiple of the 
identity matrix.
Then there are two different indices $j,k\in\{1,\ldots,m\}$ with $|c_j|<2$ and $|c_k|<2$.
\end{corol}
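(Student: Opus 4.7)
The plan is to argue by contradiction and to apply Lemma~\ref{bound2} to a suitably rotated version of the cycle. The starting observation I would make is that $\prod_{j=1}^m \eta(c_j) = \lambda I$ forces $\lambda^2 = 1$ (since each $\eta(c_j)$ has determinant one), so $\lambda \in \{\pm 1\}$ and in particular $|\lambda|=1$. At the same time, because $\lambda I$ is central in the matrix ring, every cyclic rotation $(c_{i+1},\ldots,c_m,c_1,\ldots,c_i)$ of the sequence has the same product $\lambda I$. This cyclic freedom is what I would exploit.

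I would then suppose, for contradiction, that at most one index $j_0\in\{1,\ldots,m\}$ satisfies $|c_{j_0}|<2$. By cyclically rotating the sequence I may assume that this (hypothetical) small entry sits in position $1$, so that $|c_i|\ge 2$ for all $i\in\{2,\ldots,m\}$; in particular $|c_m|\ge 2\ge 1$. Since the first column of $\lambda I$ is $(\lambda,0)^\top$, taking $d=\lambda$ and $e=0$ gives $|c_1 e - d|=|\lambda|=1>0=|e|$, so both hypotheses of Lemma~\ref{bound2} are met. The lemma then produces an index $j\in\{2,\ldots,m-1\}$ with $|c_j|<2$, contradicting the standing assumption that every such $c_j$ has absolute value at least $2$. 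This yields the desired two distinct indices.

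The main obstacle I anticipate is conceptual rather than computational. Lemma~\ref{bound2} on its own delivers only \emph{one} small entry; the trick is to recognize that by rotating the (hypothetical) unique small entry to position $1$, one forces the lemma's conclusion range $\{2,\ldots,m-1\}$ to produce a \emph{different} small entry, contradicting uniqueness. The small cases $m\in\{1,2\}$ are handled by direct inspection: for $m=1$ the hypothesis is vacuous since $\eta(c_1)$ is never a scalar matrix, while for $m=2$ a short computation of $\eta(c_1)\eta(c_2)$ forces $(c_1,c_2)=(0,0)$, which already exhibits two small entries.
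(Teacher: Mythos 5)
Your proof is correct and rests on the same two ingredients as the paper's: Lemma~\ref{bound2} applied with $e=0$ and $d=\lambda\neq 0$ (the first column of $\lambda I$), together with the observation that centrality of $\lambda I$ lets one rotate the cycle freely; the small cases $m\in\{1,2\}$ are disposed of exactly as in Remark~\ref{rem:eta}. The only real difference is organizational: the paper argues directly, invoking Lemma~\ref{bound2} twice and splitting into cases according to whether $|c_m|<1$ or $|c_m|\ge 1$ (and then $|c_{j-1}|<1$ or not) to exhibit the two small entries explicitly, whereas you assume for contradiction that at most one entry has absolute value less than $2$, rotate it to position $1$, and then a single application of the lemma (whose hypothesis $|c_m|\ge 1$ is now automatic since $|c_m|\ge 2$) already yields a contradiction. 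Your version is a little leaner and avoids the case analysis, at the cost of being non-constructive about where the two small entries sit; both are complete proofs of the corollary.
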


\begin{remar} \label{rem:eta}
(i)
Note that by Definition \ref{def:etamatrix} the assumption of Corollary \ref{cor:small}
can only be satisfied for $m\ge 2$. For $m=2$, the only sequence $(c_1,c_2)$ with 
$\eta(c_1)\eta(c_2)=\begin{pmatrix} c_1c_2-1 & -c_1 \\ c_2 & -1 \end{pmatrix}$
a scalar multiple of the identity matrix is $(c_1,c_2)=(0,0)$.
\smallskip

(ii)
Moreover, the product $\prod_{j=1}^m \eta(c_j)$ is not the
zero matrix, since the matrices $\eta(c_j)$ have determinant 1.   
\end{remar}

\begin{proof}
The statement of the corollary clearly holds for the sequence $(0,0)$. So, according to Remark
\ref{rem:eta}\,(i) we can assume that $m\ge 3$.
\smallskip

A crucial initial observation is that scalar multiples of the identity matrix commute
with every matrix. This implies that if $(c_1,\ldots,c_m)\in \CC^m$ satisfies the assumption 
of the corollary, then also the rotated sequence $(c_m,c_1,\ldots,c_{m-1})$ does.
\smallskip

Suppose first that $|c_m|<1$. If also $|c_{m-1}|<1$, we are done. If $|c_{m-1}|\ge 1$ then we
consider the rotated sequence $(c_m,c_1,\ldots,c_{m-1})$. By the initial observation, 
this satisfies the assumptions 
of Lemma \ref{bound2} (with $e=0$, and $d\neq 0$, cf.\ Remark \ref{rem:eta}\,(ii)), 
hence we obtain an index $j\in \{1,\ldots,m-2\}$ with
$|c_j|<2$, and we are also done.

So suppose from now on that $|c_m|\ge 1$. Then we can, as before, apply Lemma \ref{bound2}
and obtain an index $j\in \{2,\ldots,m-1\}$ with $|c_j|<2$. We then consider the rotated
sequence $(c_j,c_{j+1},\ldots,c_m,c_1,\ldots,c_{j-2},c_{j-1})$. If
$|c_{j-1}|<1$ we are done (choose $k=j-1$). If $|c_{j-1}|\ge 1$ then we can again apply 
Lemma \ref{bound2} (by iterating the initial observation, the sequence 
$(c_j,c_{j+1},\ldots,c_m,c_1,\ldots,c_{j-2},c_{j-1})$) satisfies the assumptions, with $e=0$ and $d\neq 0$).
Thus we get an index $k\in \{j+1,\ldots,m,1,\ldots,j-2\}$
with $|c_k|<2$. This finishes the proof.
\end{proof}

\begin{remar}
Let us revisit again the classic case of Conway-Coxeter frieze patterns (or slightly more 
generally of frieze patterns over $\mathbb{Z}\setminus\{0\}$). Then Corollary 
\ref{cor:small} states that in the quiddity cycle of every such frieze pattern 
there are two entries equal to 1 (or $\pm 1$ for frieze patterns over $\mathbb{Z}\setminus\{0\}$).
Actually this existence of 1's in the quiddity cycle is the key for establishing in \cite{CC73}
an inductive argument for showing the Conway-Coxeter bijection between frieze patterns 
over $\mathbb{N}$ and triangulations of regular polygons.
\end{remar}

We now state our rather general criterion which gives, under certain conditions on the
subset $R\subseteq \mathbb{C}$, 
upper bounds for the absolute values of entries in the quiddity cycle of a frieze pattern
over $R\setminus\{0\}$.
The idea of the proof below is inspired by experiments in the Master thesis of D.~Azadi \cite{A16}.

\begin{theor} \label{prop:bound}
Let $R\subseteq \mathbb{C}$ be a subset such that 
$$M:=\inf\{|x|\,:\,x\in R\setminus\{0\} \}>0
$$
(i.e.\ there is a non-zero lower bound on the absolute values of non-zero elements in $R$).
Let $F$ be a frieze pattern over $R\setminus \{0\}$ with height $n\in \mathbb{N}$. Then every 
entry in the 
quiddity cycle of $F$ has absolute value at most $\frac{(n-1)+2M}{M^2}$.
\end{theor}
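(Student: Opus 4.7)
My plan is to derive an explicit formula for a chosen quiddity entry as a sum of reciprocals of entries in the next row of the frieze pattern, and then bound each summand by applying the hypothesis $|x| \ge M$ for $x \in R \setminus \{0\}$. By the rotational symmetry of quiddity cycles (Remark~\ref{rem:qcreverse}(1)), it suffices to prove the bound for the first entry $c_1$ of the quiddity cycle: the same argument, carried out with rows $j$ and $j+1$ in place of rows $1$ and $2$, then bounds any $c_j$.

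Write $y_k := c_{1, k+2}$ for the entries of row $1$ and $w_k := c_{2, k+3}$ for the entries of row $2$. The border conditions of the frieze give $y_0 = w_0 = y_{n+1} = w_{n+1} = 1$, $w_{-1} = y_{n+2} = 0$, and $y_1 = c_1$. Crucially, the $n$ non-boundary entries $w_1, \ldots, w_n$ of row $2$ lie in $R \setminus \{0\}$, hence satisfy $|w_k| \ge M$. The $2 \times 2$-minor determinant condition applied to rows $1$ and $2$ at columns $k+2, k+3$ reads
\[ y_k w_k - y_{k+1} w_{k-1} = 1, \]
and dividing by $w_k w_{k-1}$ (which is non-zero for $k = 1, \ldots, n+1$) produces the telescoping identity
\[ \frac{y_k}{w_{k-1}} - \frac{y_{k+1}}{w_k} = \frac{1}{w_k w_{k-1}}. \]

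Summing from $k = 1$ to $k = n+1$ collapses the left-hand side to $\frac{y_1}{w_0} - \frac{y_{n+2}}{w_{n+1}} = c_1 - 0$, producing the key formula
\[ c_1 = \sum_{k=1}^{n+1} \frac{1}{w_k w_{k-1}}. \]
Since $w_0 = w_{n+1} = 1$, the $k=1$ and $k=n+1$ summands equal $1/w_1$ and $1/w_n$ respectively, each bounded by $1/M$, while the remaining $n - 1$ summands (for $k = 2, \ldots, n$) each satisfy $|1/(w_k w_{k-1})| \le 1/M^2$. The triangle inequality then gives
\[ |c_1| \;\le\; \frac{2}{M} + \frac{n-1}{M^2} \;=\; \frac{n-1+2M}{M^2}, \]
which is the claimed bound.

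The only non-routine step is recognising that the $2 \times 2$-minor identity should be divided by $w_k w_{k-1}$ rather than, say, by $y_k y_{k-1}$: this precise normalisation makes the left-hand side telescope into $c_1 = y_1$, and simultaneously places the quantities that can be bounded below by $M$ into the denominators. Once this is set up, everything follows by routine termwise absolute-value estimates.
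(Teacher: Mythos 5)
Your proof is correct, and it takes a genuinely different route from the paper's. You obtain the exact identity $c_1=\sum_{k=1}^{n+1}\frac{1}{w_{k-1}w_k}$ by dividing each $2\times 2$ determinant relation $y_kw_k-y_{k+1}w_{k-1}=1$ between two consecutive rows by $w_{k-1}w_k$ and telescoping; all divisions are legitimate because $w_0=w_{n+1}=1$ and $w_1,\dots,w_n\in R\setminus\{0\}$, and the boundary values $y_{n+2}=0$, $w_0=1$ make the left side collapse to $c_1$. The bound then drops out termwise: two end terms at most $\frac{1}{M}$ and $n-1$ middle terms at most $\frac{1}{M^2}$, giving exactly $\frac{(n-1)+2M}{M^2}$ (the height-$1$ case, where the identity reads $c_1=2/w_1$, confirms the index bookkeeping). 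The paper proves the same bound by contradiction: assuming a quiddity entry exceeds $B$, it runs a two-variable induction along the pair of rows, showing the $|x_k|$ remain large while successively squeezing the $|y_k|$, until $|y_n|<M$ contradicts the definition of $M$. Your argument is shorter and yields strictly more information, namely a closed continuant-type formula expressing each quiddity entry through the reciprocals of the next row, from which the stated bound is immediate and visibly of the right shape; the paper's estimate-driven induction never writes such a formula down, though it uses the same local $2\times 2$ relations and the same lower bound $M$ on the adjacent row. One cosmetic point: the appeal to rotational symmetry of quiddity cycles is superfluous --- your own observation that the identical computation with rows $j$ and $j+1$ bounds $c_j$ already covers every entry of the quiddity cycle, since every row of the frieze has the same border structure.
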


\begin{proof} We set $B:=\frac{(n-1)+2M}{M^2}$ for abbreviation; note that $B>0$ since $M>0$
by assumption.
 
Suppose for a contradiction that there is an element $x_1$ in the 
quiddity cycle of $F$ such that 
\begin{equation} \label{eq:x1}
|x_1|> B.
\end{equation}  
Then we consider two consecutive rows in the frieze pattern $F$, as in the following figure.
$$\begin{array}{cccccccc}
0 & 1 & x_1 & \ldots & x_n & 1 & 0 & \\
  & 0 & 1 & y_1 & \ldots & y_n & 1 & 0
\end{array}
$$  
We compare neighbouring entries and proceed inductively. By the defining rule for frieze patterns
we have $x_1y_1-x_2=1$, so 
$|y_1|\le \frac{1+|x_2|}{|x_1|}$
by the triangle inequality.
By definition of $M$ as infimum and by our assumption on $x_1$ (cf.\ Equation (\ref{eq:x1})) 
we can conclude that
\begin{equation} \label{eq:y1}
M\le |y_1| \le \frac{1+|x_2|}{|x_1|} < \frac{1+|x_2|}{B}.
\end{equation}
Since $B>0$, this implies that 
\begin{equation} \label{eq:x2}
|x_2| > MB-1.
\end{equation}
 
Now we go one step further. Again, by the defining rule of frieze patterns, we have 
$x_2y_2-x_3y_1=1$, so 
\begin{equation} 
|y_2|\le \frac{1+|x_3|\cdot |y_1|}{|x_2|}.
\end{equation}
Together with the definition 
of $M$ and Equation (\ref{eq:y1}) we obtain
$$
M  \le  |y_2| \le \frac{1+|x_3|\cdot |y_1|}{|x_2|} < \frac{B+|x_3| + |x_3|\cdot |x_2|}{B|x_2|} 
  =  \frac{1}{|x_2|} + \frac{|x_3|}{B} \left( \frac{1}{|x_2|}+1 \right).
$$
Now we use Equation (\ref{eq:x2}) and get
\begin{equation} \label{eq:y2}
M\le |y_2|  <  \frac{1}{MB-1} + \frac{|x_3|}{B} \cdot \frac{MB}{MB-1}
 = \frac{1}{MB-1}(1+M|x_3|).
\end{equation}
Solving this inequality for $|x_3|$ yields
\begin{equation} \label{eq:x3}
|x_3| > \frac{M(MB-1)-1}{M}.
\end{equation}
(Note that $MB-1 = \frac{n-1+2M}{M}-1 = \frac{(n-1)+M}{M} >0$ so the inequality sign is not reversed.)

\smallskip

Now we suppose inductively that we have already shown
\begin{equation} \label{eq:xk}
|x_k| > \frac{M(MB-1)-(k-2)}{M} \mbox{~~~~~~~for $k=2,\ldots, i+1$}
\end{equation}
and
\begin{equation} \label{eq:yk}
|y_k| < \frac{M}{M(MB-1)-(k-2)} (1+M|x_{k+1}|) \mbox{~~~~~~~for $k=2,\ldots, i$}.
\end{equation}

Note that Equations (\ref{eq:x2}) and (\ref{eq:x3}) are the special case of Equation  
(\ref{eq:xk}) for $k=2$ and $k=3$; moreover, Equation (\ref{eq:y2}) shows that Equation 
(\ref{eq:yk}) is valid for $k=2$.   
This means that the induction base for $i=2$ has been settled above.
\smallskip

For the induction step we now consider $y_{i+1}$ (where $i+1\le n$); 
by the defining rule for frieze patterns this has the 
form $y_{i+1} = \frac{1+x_{i+2}y_i}{x_{i+1}}$. Analogous to the arguments for the induction base
we get the following series of inequalities by using the triangle inequality and the induction
hypotheses for $|y_i|$ in Equation (\ref{eq:yk}) and for $|x_{i+1}|$ in Equation (\ref{eq:xk}),
respectively.
\begin{eqnarray*}
|y_{i+1}| & \le & \frac{1+|x_{i+2}|\cdot |y_i|}{|x_{i+1}|}  \\
& < & \frac{1}{|x_{i+1}|} + \frac{M|x_{i+2}|}{M(MB-1)-(i-2)} \left( \frac{1}{|x_{i+1}|} + M\right) \\
& < & \frac{M}{M(MB-1)-(i-1)} + \frac{M|x_{i+2}|}{M(MB-1)-(i-2)} 
\left( \frac{M}{M(MB-1)-(i-1)} + M\right) \\
& = & \frac{M}{M(MB-1)-(i-1)} \left( 1 + M |x_{i+2}| \right).
\end{eqnarray*}
This shows the induction step for Equation (\ref{eq:yk}). But using that $M\le |y_{i+1}|$
and then solving the above inequality for $|x_{i+2}|$ yields
$$|x_{i+2}| > \frac{M(MB-1)-i}{M}
$$
and this gives the induction step for Equation (\ref{eq:xk}) as well.
(Note again that multiplication does not reverse the inequality sign since 
$M(MB-1)-(i-1) = n-1+M-(i-1) = n-i+M >0$ since $i\le n-1$.)
\smallskip

We are considering a frieze pattern $F$ of height $n$. Thus, $x_{n+1}=1$ and eventually we get from
Equation (\ref{eq:yk}) that
\begin{equation} \label{eq:yn}
|y_n| <  \frac{M}{M(MB-1)-(n-2)} \left( 1+M \right).
\end{equation}
Using the definition of $B$ we compute that 
$$M(MB-1) = M\left( \frac{(n-1)+2M}{M} -1 \right) = n-1+M.
$$
Together with Equation (\ref{eq:yn}) this yields $|y_n| < M$, contradicting the definition 
of $M$ as infimum and the fact that our frieze pattern $F$ has non-zero entries by assumption.
\end{proof}

\begin{remar}
For the classic case of Conway-Coxeter frieze patterns over $\mathbb{N}$ (or frieze
patterns over $\mathbb{Z}\setminus\{0\}$) the above theorem
yields that every entry in the quiddity cycle of such a frieze pattern of height $n$ has 
absolute value $\le n+1$. This is not hard to deduce from the bijection between frieze 
patterns over $\mathbb{N}$ 
and triangulations because the entries in the quiddity cycle are given by the 
numbers of triangles attached to the vertices.
However, the above proof does not refer to this bijection.
\end{remar}

As one of our main applications of Theorem \ref{prop:bound} we can deduce for a large class 
of subsets $R\subseteq \mathbb{C}$ that there are only finitely many frieze patterns 
over $R\setminus \{0\}$ for any given height.

\begin{corol} \label{cor:discrete}
Let $R\subseteq \CC$ be a discrete subset (i.e.\ $R$ has no accumulation point).
Then for each $n\in \mathbb{N}$ there are only finitely
many frieze patterns over $R\setminus \{0\}$ of height $n$.
\end{corol}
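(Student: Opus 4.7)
The plan is to apply Theorem \ref{prop:bound} directly, so the first task is to verify its hypothesis that $M := \inf\{|x| : x \in R \setminus \{0\}\} > 0$. Since $R$ has no accumulation point in $\CC$, in particular $0$ is not an accumulation point of $R$. Hence there is some $\varepsilon > 0$ such that $R \cap \{z \in \CC : |z| \le \varepsilon\}$ is finite (this intersection is bounded and has no accumulation point in the compact ball, so by Bolzano--Weierstrass it must be finite). Any non-zero element of $R$ in this finite set has positive absolute value, and all other elements of $R \setminus \{0\}$ have absolute value exceeding $\varepsilon$. Thus the infimum $M$ is attained over a finite collection of positive numbers and is strictly positive.

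Having secured $M > 0$, Theorem \ref{prop:bound} tells me that every entry in the quiddity cycle of a frieze pattern over $R \setminus \{0\}$ of height $n$ has absolute value at most $B := \frac{(n-1)+2M}{M^2}$. By Proposition \ref{perck}, such a quiddity cycle is a tuple of length exactly $m = n+3$ (the period equals $n+3$ since the pattern has no zero entries and is therefore tame). Each of these $n+3$ entries lies in the set $R \cap \{z \in \CC : |z| \le B\}$.

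Now I would invoke the discreteness of $R$ once more: the closed disk of radius $B$ is compact, so, as in the first step, $R \cap \{|z| \le B\}$ is a finite set. Let $N$ denote its cardinality. Then the number of possible quiddity cycles of length $n+3$ with entries in this set is at most $N^{n+3}$, which is finite. Since any frieze pattern with non-zero entries is uniquely determined by its first non-trivial diagonal $(c_{k,k+2})_k$, that is, by its quiddity cycle (as noted after Proposition \ref{perck}), this bounds the total number of frieze patterns over $R \setminus \{0\}$ of height $n$ by $N^{n+3}$, completing the proof.

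There is no real obstacle here; the whole argument is a straightforward packaging of Theorem \ref{prop:bound}. The only point that deserves a moment of care is the passage from ``$R$ has no accumulation point'' to ``$R$ meets every bounded set in finitely many points'', which is needed both to establish $M > 0$ and to conclude that only finitely many tuples of prescribed length can serve as quiddity cycles.
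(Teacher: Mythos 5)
Your proposal is correct and follows essentially the same route as the paper's own proof: discreteness gives $M>0$ so Theorem \ref{prop:bound} applies, discreteness again gives finiteness of $R$ intersected with the resulting bounded disk, and since a non-zero frieze pattern is determined by its quiddity cycle (of length $n+3$ by Proposition \ref{perck}), only finitely many patterns can occur. The extra details you supply (the Bolzano--Weierstrass justification and the explicit bound $N^{n+3}$) are just elaborations of steps the paper states without proof.
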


\begin{proof}
The subset $R$ is discrete, in particular the origin is not an accumulation point for $R$, so  
$R$ satisfies the assumption of Theorem \ref{prop:bound}. Furthermore, again by discreteness,
each closed disk of $\mathbb{C}$ contains only finitely many elements of $R$. Together with 
Theorem \ref{prop:bound} this implies that for fixed
height $n$ there are only finitely many possible elements of $R$ which can appear in a quiddity cycle
of a frieze pattern of height $n$. But any non-zero frieze pattern is uniquely determined by its quiddity cycle.
So the claim follows.
\end{proof}

For dealing with a second main application of our finiteness criterion Theorem \ref{prop:bound} 
we first state an observation which easily follows from a construction we will present in the
next section.

\begin{propo} \label{prop:units}
Let $R\subseteq \CC$ be a subring containing infinitely many divisors of $2$ (i.e.\ elements $t\in R$ s.t.\ 
$\frac{2}{t}\in R$).
Then for each $n\in \mathbb{N}$
there are infinitely many frieze patterns of height $n$ over $R\setminus \{0\}$.
\end{propo}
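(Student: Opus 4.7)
The plan is to extend the length-$4$ quiddity cycles of Example~\ref{ex:quiddity} to arbitrary length $n+3$ via a Conway--Coxeter style insertion, producing infinitely many non-zero frieze patterns of height $n$ from the hypothesis that $R$ contains infinitely many divisors of $2$.

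For $n=1$ the claim already follows directly from Example~\ref{ex:quiddity}: for each divisor $t$ of $2$ in $R$, the sequence $(t,\,2t^{-1},\,t,\,2t^{-1})$ is a quiddity cycle over $R\setminus\{0\}$, and distinct $t$ yield distinct cycles, giving by Proposition~\ref{perck}(2) infinitely many non-zero frieze patterns of height $1$ over $R$.

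For $n\geq 2$, the main tool is the $2\times 2$ matrix identity
\[
\eta(a+1)\,\eta(1)\,\eta(b+1) \;=\; \eta(a)\,\eta(b) \qquad \text{for all } a,b\in\CC,
\]
verified by a direct computation. It implies that from any quiddity cycle $(c_1,\ldots,c_m)$ over $R$ one obtains a longer quiddity cycle $(c_1+1,\,1,\,c_2+1,\,c_3,\ldots,c_m)$ of length $m+1$ over $R$ (using $1\in R$). I would apply this insertion $n-1$ times between the current first two entries, starting from $(t,\,2t^{-1},\,t,\,2t^{-1})$. A straightforward induction shows that the resulting length-$(n+3)$ quiddity cycle is
\[
\bigl(t+n-1,\ 1,\ \underbrace{2,\ldots,2}_{n-2},\ 2t^{-1}+1,\ t,\ 2t^{-1}\bigr),
\]
which via Proposition~\ref{perck}(2) produces a tame frieze pattern over $R$ of height $n$.

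Within this cycle, the only entries that could vanish in $\CC$ are $t+n-1$ and $2t^{-1}+1$, and each excludes a single value of $t$. Since by hypothesis $R$ contains infinitely many divisors of $2$, infinitely many admissible $t$ remain. Distinct $t$ give distinct first entries, hence distinct quiddity cycles, and by the unique determination of a non-zero frieze pattern by its quiddity cycle (noted after Proposition~\ref{perck}) we obtain pairwise distinct non-zero frieze patterns over $R$ of height $n$. The argument has no serious obstacle: it reduces to the displayed matrix identity, a trivial induction tracking the shape of the expanding cycle, and the exclusion of at most two bad values of $t$.
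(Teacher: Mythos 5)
Your construction is exactly the paper's: start from $(t,\tfrac{2}{t},t,\tfrac{2}{t})$ and insert $n-1$ ones via Proposition~\ref{eta_rule}\,(a) to get the quiddity cycle $(t+n-1,1,2,\ldots,2,1+\tfrac{2}{t},t,\tfrac{2}{t})$. However, there is a genuine gap in the final step. The proposition asks for frieze patterns over $R\setminus\{0\}$, i.e.\ \emph{every} entry of the frieze pattern must be non-zero, not just the entries of the quiddity cycle. Your exclusion argument ("the only entries that could vanish are $t+n-1$ and $2t^{-1}+1$") only inspects the quiddity cycle, and the appeal to unique determination of a non-zero frieze pattern by its quiddity cycle is a uniqueness statement; it does not show that the frieze pattern built from a non-zero quiddity cycle has no zero entries elsewhere. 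Concretely, for $n\ge 2$ and $t=-1$ your cycle $(n-2,1,2,\ldots,2,-1,-1,-2)$ has no zero entries, yet the corresponding frieze pattern contains the entry $t+1=0$ (the first row of the fundamental domain reads $1,\,t+n-1,\,t+n-2,\ldots,t+1,\,t,\,1$), so your admissibility criterion accepts values of $t$ that do not yield frieze patterns over $R\setminus\{0\}$.

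The missing step is to compute the whole fundamental domain of the frieze pattern, as the paper does: its entries are $t+k$ for $0\le k\le n-1$, the positive integers $1,2,\ldots,n-1$, and $1+\tfrac{2k}{t}$ for $1\le k\le n-1$, all of which lie in $R$ since $R$ is a subring containing $\tfrac{2}{t}$. From this one sees that a zero entry occurs exactly when $t\in\{-1,-2,\ldots,-(n-1)\}$ or $t\in\{-2,-4,\ldots,-2(n-1)\}$ --- a finite set, larger than the two values you exclude, but still finite, so the conclusion survives once this is checked. As written, though, your proof does not establish that all entries of the frieze pattern are non-zero, which is the very point of the statement.
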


\begin{proof}
Let $t\in R$ be an element such that $\frac{2}{t}\in R$. 

Then  
the sequence $(t,\frac{2}{t},t,\frac{2}{t})$ is the quiddity cycle of a frieze pattern of height 1
over $R\setminus \{0\}$, thus proving the claim for $n=1$.

Now fix some $n\ge 2$. For any $t\in R$ with $\frac{2}{t}\in R$, start with the quiddity cycle 
$(t,\frac{2}{t},t,\frac{2}{t})$.
Then use the rule from Proposition \ref{eta_rule} to insert $n-1$ times a 1 between 
the first and second entry in the quiddity cycle. By Proposition \ref{eta_rule} this yields a new
quiddity cycle of length $n-3$ and this has the form
$$(t+n-1,1,2,\ldots,2,1+\frac{2}{t},t,\frac{2}{t}).
$$
One checks that the fundamental domain of the corresponding frieze pattern has the following form:
$$\begin{array}{cccccccccc}
1 & t+n-1 & t+n-2 & t+n-3 & \ldots & \ldots & \ldots & t+1 & t & 1 \\
& & & & & & & & & \\
& 1 & 1 & 1 & 1 & \ldots & \ldots & 1 & 1 & \frac{2}{t} \\
& & & & & & & & & \\
& & 1 & 2 & 3 & 4 & \ldots & n-2 & n-1 & 1+\frac{2(n-1)}{t} \\
& & & & & & & & & \\
& & & 1 & 2 & 3 & \ldots & n-3 & n-2 & 1+\frac{2(n-2)}{t} \\
& & & & & & & & & \\
& & & & \ddots & \ddots & & \vdots & n-3 & 1+\frac{2(n-3)}{t} \\
& & & & & & & & & \\
& & & & & \ddots & \ddots & \vdots & \vdots & \vdots \\
& & & & & & & & & \\
& & & & & & 1 & 2 & 3 & 1+\frac{6}{t} \\
& & & & & & & & & \\
& & & & & & & 1 & 2 & 1+\frac{4}{t} \\
& & & & & & & & & \\
& & & & & & & & 1 & 1+\frac{2}{t} \\
& & & & & & & & & \\
& & & & & & & & & 1 \\
\end{array} 
$$
Note that all entries in this frieze pattern are in $R$ since $R$ is a subring and $\frac{2}{t}\in R$.

Moreover, only for finitely many $t$ some entry in the frieze pattern becomes zero, namely
for $t\in \{-1,-2,\ldots,-(n-1)\}$ and for $t\in \{-2,-4,-6,\ldots,-2(n-1)\}$. 

By assumption, there are infinitely many possible such $t$. This implies that there are
infinitely many frieze patterns of height $n$ over $R\setminus \{0\}$, as claimed.
\end{proof}

As a second main application of our results 
in this section we can provide an answer to a question posed
by Fontaine in \cite{F14}. At the end of his paper, Fontaine considered frieze patterns over rings 
$\mathbb{Z}[\zeta]$ where $\zeta\in \mathbb{C}$ is a root of unity and he stated the conjecture
that over each of these rings there are only finitely many non-zero frieze patterns of any given height
(see \cite[Conjecture 6.2]{F14}).

This conjecture has a negative answer in general for any height $n$;
only for few such rings one indeed gets finitely many 
frieze patterns. The following result gives a complete answer.

\begin{corol} \label{cor:zeta_d}
For $d\in \mathbb{N}$, let $\zeta_d\in \mathbb{C}$ be a primitive $d$-th root of unity. The
following statements are equivalent:
\begin{enumerate}
\item[{(i)}] For every $n\in \mathbb{N}$, the number of non-zero frieze patterns of height $n$ 
over $\mathbb{Z}[\zeta_d]$ is finite.
\item[{(ii)}] The set $\mathbb{Z}[\zeta_d]$ is discrete in $\mathbb{C}$.
\item[{(iii)}] The group of units of $\mathbb{Z}[\zeta_d]$ is finite. 
\item[{(iv)}] $d\in \{1,2,3,4,6\}$.
\end{enumerate}
\end{corol}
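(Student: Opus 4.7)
The plan is to close the cycle (iv) $\Rightarrow$ (ii) $\Rightarrow$ (i) $\Rightarrow$ (iii) $\Rightarrow$ (iv), which uses Corollary~\ref{cor:discrete} and Proposition~\ref{prop:units} as the two combinatorial inputs, leaving algebraic number theory to be invoked only once.

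The first leg, (iv) $\Rightarrow$ (ii), is a direct check: for $d\in\{1,2\}$ the ring equals $\mathbb{Z}$, and for $d\in\{3,4,6\}$ the element $\zeta_d$ satisfies a quadratic relation over $\mathbb{Z}$, so $\mathbb{Z}[\zeta_d]=\mathbb{Z}+\mathbb{Z}\zeta_d$. Since $\zeta_d$ is non-real, $\{1,\zeta_d\}$ is $\mathbb{R}$-linearly independent and $\mathbb{Z}[\zeta_d]$ is a full rank-$2$ lattice in $\mathbb{C}$, hence discrete. The implication (ii) $\Rightarrow$ (i) is then just Corollary~\ref{cor:discrete} applied to $R=\mathbb{Z}[\zeta_d]$.

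For (i) $\Rightarrow$ (iii) I would argue by contraposition. Every unit $u\in\mathbb{Z}[\zeta_d]^\times$ satisfies $2/u=2u^{-1}\in\mathbb{Z}[\zeta_d]$, so each unit is a divisor of $2$ in the sense of Proposition~\ref{prop:units}. If the unit group were infinite, Proposition~\ref{prop:units} would produce infinitely many frieze patterns over $\mathbb{Z}[\zeta_d]\setminus\{0\}$ of any prescribed height $n\geq 1$, contradicting (i).

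The remaining step (iii) $\Rightarrow$ (iv) is the main obstacle and is where one has to bring in external input. The ring of integers of the cyclotomic field $\mathbb{Q}(\zeta_d)$ is exactly $\mathbb{Z}[\zeta_d]$, and for $d\geq 3$ this field is totally complex of degree $\phi(d)$, so $r_1=0$ and $r_2=\phi(d)/2$; by Dirichlet's unit theorem the rank of $\mathbb{Z}[\zeta_d]^\times$ is $\phi(d)/2-1$, which vanishes exactly when $\phi(d)=2$, giving $d\in\{3,4,6\}$, while $d\in\{1,2\}$ yield the trivial unit group $\{\pm1\}$. For every other $d$ one has $\phi(d)\geq 4$ and positive rank, hence infinite unit group. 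If one prefers to avoid the full strength of Dirichlet's theorem, an alternative is to exhibit an explicit unit of infinite order; for instance, cyclotomic units of the form $(1-\zeta_d^k)/(1-\zeta_d)$ with $\gcd(k,d)=1$ lie in $\mathbb{Z}[\zeta_d]^\times$ and have absolute value different from $1$ as soon as $\phi(d)\geq 4$, so their powers already give an infinite family of distinct units.
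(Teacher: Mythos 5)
Your proof is correct and follows essentially the same route as the paper: the cycle (iv)~$\Rightarrow$~(ii)~$\Rightarrow$~(i)~$\Rightarrow$~(iii)~$\Rightarrow$~(iv), with Corollary~\ref{cor:discrete} for (ii)~$\Rightarrow$~(i), Proposition~\ref{prop:units} (units are divisors of $2$) for (i)~$\Rightarrow$~(iii), and Dirichlet's unit theorem with $\varphi(d)\le 2$ for (iii)~$\Rightarrow$~(iv). The only differences are cosmetic: you spell out the lattice argument for discreteness and the divisor-of-$2$ observation, and you add an optional explicit cyclotomic-unit argument that the paper does not need.
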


\begin{proof}
We show the implications $(iv)\Rightarrow (ii) \Rightarrow (i) \Rightarrow (iii) \Rightarrow (iv)$. 

Let us first suppose that $(iv)$ holds.
For the specific values given in $(iv)$ 
we have $\mathbb{Z}[\zeta_1]=\mathbb{Z}= \mathbb{Z}[\zeta_{2}]$, the
integers, 
$\mathbb{Z}[\zeta_4]=\mathbb{Z}[\ii]$, the Gaussian integers, and 
$\mathbb{Z}[\zeta_3]=\mathbb{Z}[\frac{1+\ii\sqrt{3}}{2}]= \mathbb{Z}[\zeta_{6}]$, the Eisenstein integers.
These rings are easily seen 
to be discrete subsets of $\mathbb{C}$, so $(ii)$ holds. 

Let us now suppose that $(ii)$ holds.
Then $(i)$ follows directly from Corollary \ref{cor:discrete}.

Now suppose that $(i)$ holds. Then it follows from Proposition \ref{prop:units} that the ring
$\mathbb{Z}[\zeta_d]$ has only finitely many units, thus $(iii)$ holds.  

We now suppose that $(iii)$ holds. 
The ring $\mathbb{Z}[\zeta_d]$ is the ring of integers of the
cyclotomic number field $\mathbb{Q}(\zeta_d)$ (see e.g.\ \cite[Proposition 10.2]{Neukirch}).
According to Dirichlet's unit theorem,  
the rank of the group of units of $\mathbb{Z}[\zeta_d]$ has the form $r_1+r_2-1$ where $r_1$ and $r_2$
are the numbers of real embeddings and pairs of complex embeddings of $\mathbb{Q}(\zeta_d)$,
respectively. Moreover,
$r_1+2r_2 = |\mathbb{Q}(\zeta_d):\mathbb{Q}|=\varphi(d)$, where $\varphi$ denotes Euler's totient
function.

By assumption $(iii)$, the rank of the group of units must be 0, thus
$r_1+r_2=1$. The are only two possibilities: we can have $r_1=1$ and $r_2=0$, and then 
$\varphi(d)=r_1+2r_2=1$; or we have $r_1=0$ and $r_2=1$, and then $\varphi(d)=2$. It is an easy exercise 
on Euler's totient function to show that $\varphi(d)=1$ if and only if $d\in \{1,2\}$ and
$\varphi(d)=2$ if and only if $d\in \{3,4,6\}$.
So statement $(iv)$ holds.
\end{proof}

\begin{remar}
For $d\in \{1,2\}$, the finitely many non-zero frieze patterns over the integers have been classified 
by Fontaine \cite{F14}. In addition to the classic Conway-Coxeter frieze patterns 
over $\mathbb{N}$, new frieze patterns over $\mathbb{Z}\setminus\{0\}$ only appear when the
height $n$ is odd, and then one gets twice as many frieze patterns as Conway-Coxeter frieze patterns.
\smallskip

\noindent
For $d=4$ and $d\in \{3,6\}$ the situation is more complex. In addition to the Conway-Coxeter
frieze patterns, there are plenty of new non-zero
frieze patterns over the Gaussian integers $\mathbb{Z}[\ii]$ and the Eisenstein integers $\mathbb{Z}[\frac{1+\ii\sqrt{3}}{2}]$.
A major complication compared to the classic
Conway-Coxeter frieze patterns (or frieze patterns over $\mathbb{Z}\setminus\{0\}$) is that in the quiddity
cycles one does not necessarily have an entry equal to 1 (or $\pm 1$). For instance, Example \ref{ex:friezes}\,(2)
gives an example of a non-zero frieze pattern over the Gaussian integers without 1's in the quiddity cycle.
However, Corollary \ref{cor:small} yields that in each non-zero frieze pattern over $\mathbb{Z}[\ii]$ one has two entries
of absolute value less than 2, i.e.\ entries in $\{\pm 1, \pm \ii, \pm (1+\ii), \pm (1-\ii)\}$. 

It seems to be a subtle problem to classify all non-zero frieze patterns over the Gaussian integers and the
Eisenstein integers. With the help of a computer we have calculated the numbers of such frieze patterns for small
heights. For $n\in \mathbb{N}$ denote by $G_n$ and $E_n$ the numbers of non-zero frieze patterns of height $n$
over the Gaussian integers and Eisenstein integers, respectively. 
Moreover, there is an action of the dihedral group $D_{n+3}$ on the set of non-zero frieze patterns of 
height $n$, coming from rotating and reflecting the corresponding quiddity cycles, see Remark \ref{rem:qcreverse}.
We have also computed the numbers of non-zero frieze patterns 
modulo this action, and denote them by 
$G'_n$ and $E'_n$. 

Then we summarize our computer calculations in the following table.  
\begin{center}
\begin{tabular}{|c||c|c|c|c|}
\hline
$n$ & 1 & 2 & 3 & 4 \\
\hline\hline
$G_n$ & 12 & 55 & 668 & 4368 \\
\hline
$G'_n$ & 6 & 7 & 81 & 323 \\
\hline
$E_n$ & 12 & 75 & 1062 & 8526 \\
\hline
$E'_n$ & 6 & 10 & 127 & 628 \\
\hline
\end{tabular}
\end{center}
\end{remar}

\section{Transformations and rules}
\label{sec:transform}

Similar to the classic Conway-Coxeter theory we would like to 
classify frieze patterns over some subset $R\subseteq \mathbb{C}$ 
via their quiddity cycles. More precisely, we want to prove that every quiddity cycle admits 
transformations leading to a shorter cycle.

All results in this section hold for matrices over an arbitrary commutative ring $R$, so we take a slightly 
more general viewpoint here, setting $\eta(c)=\begin{pmatrix} c & -1 \\ 1 & 0 \end{pmatrix}$
for $c\in R$ as in Definition \ref{def:etamatrix}.

We first recall some easy but very useful transformations, showing that in a quiddity cycle one can 
insert/delete $\pm 1$'s.

\begin{propo}[See also {\cite[Lemma 5.2]{CH09}}]
\label{eta_rule}
For all $a,b\in R$ we have
\begin{enumerate}
\item[{(a)}] 
$\eta(a)\eta(b) = \eta(a+1)\eta(1)\eta(b+1).$
\item[{(b)}] $\eta(a)\eta(b) = -\eta(a-1)\eta(-1)\eta(b-1).$
\end{enumerate}
\end{propo}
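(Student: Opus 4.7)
My plan is to verify both identities by direct $2\times 2$ matrix multiplication. Each side is a polynomial expression in $a$ and $b$, so no invertibility, scalar extension, or any further structure on $R$ is needed; the argument therefore works verbatim over an arbitrary commutative ring, matching the generality announced at the start of the section.

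For (a) I would first expand the left-hand side once, obtaining
\[
\eta(a)\eta(b) = \begin{pmatrix} ab-1 & -a \\ b & -1 \end{pmatrix},
\]
and then multiply out the right-hand side. To keep the arithmetic short I would group the two rightmost factors first, since $\eta(1)\eta(b+1)$ is independent of $a$; a single further left-multiplication by $\eta(a+1)$ then produces the same $2\times 2$ matrix, and comparing the four entries finishes (a). For (b) the same strategy applies: I compute $\eta(-1)\eta(b-1)$ first, then multiply by $\eta(a-1)$ on the left, and observe that the result is exactly $-\eta(a)\eta(b)$, so the overall sign in front of the triple product on the right-hand side restores the identity.

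There is essentially no obstacle: the assertion is a pair of polynomial identities in four scalar entries, and the verification is a short computation. One could try to bootstrap (b) from (a) by a substitution that shifts $a,b$ and changes signs — for instance via the conjugation trick used in Remark \ref{rem:qcreverse}(2), where $T\eta(c)T = \eta(-c)$ for a suitable $T$ — but that argument uses an element with $T^2=-I$ and is not available in an arbitrary commutative ring, so the direct calculation is both cleaner and preserves the full generality claimed in the section.
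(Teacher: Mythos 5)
Your proposal is correct and follows the same route as the paper, which simply verifies both identities by direct $2\times 2$ matrix computation (the paper leaves the multiplication to the reader). Your remark that the argument is a polynomial identity valid over any commutative ring matches the generality the paper claims for this section.
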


\begin{proof} Both formulas can easily be verified by direct calculations.
\end{proof}

The following lemma contains some of the (apparently) most useful rules concerning products of matrices of 
the form $\eta(a)$, $a\in\CC$. We will need them later in Section \ref{sec:Zfriezes}
for the main result about quiddity cycles over $\ZZ$.

\begin{lemma}\label{auvb}
Let $a,u,v,b,\lambda\in R$ and assume that $uv-1$ and $\lambda$ are invertible in $R$. Then we have:
\begin{eqnarray}
\label{eqauvb}\quad\quad \eta(a)\eta(u)\eta(v)\eta(b) &=& \eta\left(a+\frac{1-v}{uv-1}\right)\eta(uv-1)\eta\left(b+\frac{1-u}{uv-1}\right) 
\\
\quad\quad \eta(a)\eta(u)\eta(v)\eta(b) &=& \eta\left(a+\frac{(\frac{1}{\lambda}-1)v}{uv-1}\right)
\eta(\lambda u)\eta\left(\frac{v}{\lambda}\right)\eta\left(b+\frac{(\lambda-1)u}{uv-1}\right)
\\
\label{eqa0b}\quad\quad \eta(a)\eta(0)\eta(b) &=& \eta(a+u)\eta(0)\eta(b-u) = -\eta(a+b)
\end{eqnarray}
\end{lemma}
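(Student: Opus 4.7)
\medskip

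\noindent\textbf{Proof plan.}
All three identities are pure matrix identities in $M_2(R)$, so the natural plan is direct matrix computation; the content lies in identifying the correct coefficients. I would start with the simplest identity (\ref{eqa0b}). Since $\eta(0)=\begin{pmatrix}0 & -1\\ 1 & 0\end{pmatrix}$, one quickly obtains $\eta(a)\eta(0) = \begin{pmatrix}-1 & -a\\ 0 & -1\end{pmatrix}$, and then
$$\eta(a)\eta(0)\eta(b) = \begin{pmatrix} -(a+b) & 1\\ -1 & 0 \end{pmatrix} = -\eta(a+b).$$
The first equality in (\ref{eqa0b}) is then immediate from $\eta(a+u)\eta(0)\eta(b-u)=-\eta((a+u)+(b-u))=-\eta(a+b)$, and in particular does not require $u$ to have any special form.

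For (\ref{eqauvb}) I would first compute the ``inner block'' $\eta(u)\eta(v)=\begin{pmatrix}uv-1 & -u\\ v & -1\end{pmatrix}$, and then multiply out both sides of the claimed identity. Matching the $(2,2)$-entry on both sides is trivial ($-(uv-1)$ on each), and matching the $(1,2)$- and $(2,1)$-entries respectively yields the two linear conditions $\alpha(uv-1)=1-v$ and $\beta(uv-1)=1-u$, where $\alpha,\beta$ are the corrections added to $a$ and $b$. This is where the invertibility of $uv-1$ in $R$ is used, and it forces the values $\alpha=\frac{1-v}{uv-1}$, $\beta=\frac{1-u}{uv-1}$ stated in the lemma. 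Once $\alpha,\beta$ are fixed and three entries agree, the fourth entry (the $(1,1)$-entry) must agree automatically since both sides are products of matrices of determinant $1$, hence themselves of determinant $1$.

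For the second identity in (\ref{eqauvb}), rather than redoing the whole calculation I would derive it from the first identity applied twice. The key observation is that $(\lambda u)(v/\lambda)=uv$, so $\eta(\lambda u)\eta(v/\lambda)$ has the same ``middle entry'' $uv-1$ as $\eta(u)\eta(v)$. Applying the first identity to both $\eta(a)\eta(u)\eta(v)\eta(b)$ and $\eta(a+\alpha)\eta(\lambda u)\eta(v/\lambda)\eta(b+\beta)$ rewrites each as $\eta(\cdot)\eta(uv-1)\eta(\cdot)$, and equating the two outer arguments gives
$$\alpha \;=\; \frac{1-v}{uv-1}-\frac{1-v/\lambda}{uv-1}\;=\;\frac{(\tfrac{1}{\lambda}-1)v}{uv-1},\qquad \beta\;=\;\frac{1-u}{uv-1}-\frac{1-\lambda u}{uv-1}\;=\;\frac{(\lambda-1)u}{uv-1}.$$
This is exactly what is claimed, and it is the only step requiring the hypothesis that $\lambda$ is a unit (so that $v/\lambda$ makes sense).

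The main ``obstacle'' here is really just bookkeeping: there is no conceptual difficulty, only the need to keep track of denominators and to invoke the two invertibility hypotheses on $uv-1$ and $\lambda$ at the correct places. No further ingredients beyond Definition \ref{def:etamatrix} are needed.
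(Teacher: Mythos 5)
Your proposal is correct and takes essentially the same route as the paper, whose proof simply states that all three formulas follow from straightforward (if tedious) matrix computations — which is what you carry out, with the added efficiency of deducing the second identity in (\ref{eqauvb}) from the first via $(\lambda u)(v/\lambda)=uv$. One small point worth making explicit: your determinant shortcut for the $(1,1)$-entry requires cancelling the common $(2,2)$-entry $-(uv-1)$, which is legitimate precisely because $uv-1$ is assumed invertible in $R$.
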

\begin{proof}
Each formula can be verified by straightforward (though slightly tedious) matrix computations.
We leave the details to the reader.
\end{proof}

Finally, we mention the following very useful general transformation. It is not immediately applied in the present paper but
it is the key to reductions for quiddity cycles over several subsets of $\CC$.  

\begin{lemma}\label{fabb}
Let $a,b\in R$ and  assume that $u,z$ are invertible in $R$. Then
\[
\begin{pmatrix} \frac{1}{z} & 0 \\ 0 & z \end{pmatrix} \eta(a)\eta(u)\eta(b) \begin{pmatrix} 
z & 0 \\ 0 & \frac{1}{z} \end{pmatrix} =
\eta\left(\frac{a}{z^{2}} -\frac{\frac{1}{z^2}-1}{u}\right) \eta(u) 
\eta\left(z^2 b-\frac{z^2-1}{u}\right).
\]
\end{lemma}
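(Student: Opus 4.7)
The plan is to verify the identity by a direct entry-by-entry comparison of both sides, exploiting the fact that both matrices have determinant $1$ so that only three of the four entries need to be checked.

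First I would record the effect of conjugation by the diagonal matrix. Setting $D=\operatorname{diag}(1/z,z)$, one checks immediately that for any $2\times 2$ matrix $M=(m_{ij})$,
\[
D M D^{-1} = \begin{pmatrix} m_{11} & m_{12}/z^2 \\ z^2 m_{21} & m_{22} \end{pmatrix}.
\]
So conjugation leaves the diagonal entries fixed, multiplies the $(1,2)$-entry by $1/z^2$, and multiplies the $(2,1)$-entry by $z^2$. Next I would compute, by straightforward expansion,
\[
\eta(a)\eta(u)\eta(b) = \begin{pmatrix} aub-a-b & 1-au \\ ub-1 & -u \end{pmatrix},
\]
so that the left-hand side of the claimed identity equals
\[
\begin{pmatrix} aub-a-b & (1-au)/z^2 \\ z^2(ub-1) & -u \end{pmatrix}.
\]

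Then I would expand the right-hand side, writing $a'=\tfrac{a}{z^2}-\tfrac{1/z^2-1}{u}$ and $b'=z^2b-\tfrac{z^2-1}{u}$ and using the same formula for $\eta(a')\eta(u)\eta(b')$. Now the $(2,2)$-entry of the RHS is $-u$, matching the LHS. For the $(1,2)$-entry, one verifies $1-a'u=(1-au)/z^2$, which is a one-line algebraic manipulation starting from the definition of $a'$. Similarly, for the $(2,1)$-entry, one verifies $ub'-1=z^2(ub-1)$ directly from the definition of $b'$ (both computations require $u$ to be invertible so that $a'$ and $b'$ make sense, which is given).

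At this point three of the four entries of the two matrices agree. Since both matrices have determinant $1$ (the LHS as a conjugate of a product of matrices of determinant $1$, and the RHS as a product of three $\eta$-matrices), and since the $(2,2)$-entry $-u$ is invertible by hypothesis, the $(1,1)$-entry of each matrix is forced: it equals $(1+m_{12}m_{21})/m_{22}$ in terms of the other three entries. Hence the $(1,1)$-entries also coincide, and the identity follows. The only potential obstacle is bookkeeping in the algebra, but no step requires anything beyond elementary manipulation in the commutative ring $R$, so no essential difficulty arises.
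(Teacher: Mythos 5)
Your proof is correct and takes essentially the same route as the paper, whose proof is simply a direct computational verification. Your bookkeeping---noting that conjugation by $\operatorname{diag}(1/z,z)$ fixes the diagonal and scales the off-diagonal entries by $z^{\mp 2}$, matching three entries explicitly, and then using that both sides have determinant $1$ and that the $(2,2)$-entry $-u$ is invertible to force agreement of the $(1,1)$-entries---is just a tidy way of organizing that same direct computation.
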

\begin{proof}
The formula can easily be verified by direct computation.
\end{proof}

\section{Frieze patterns as specializations of clusters algebras}

In this section we consider the close connection between frieze patterns and Fomin and 
Zelevinsky's cluster algebras in Dynkin type $A$. If you place indeterminates $x_1,\ldots,x_n$
on a row of a frieze pattern of height $n$ (say on the positions labelled 
$c_{0,2},c_{0,3},\ldots,c_{0,n+1}$ in Definition \ref{def:frieze}) and still follow the 
local condition $ad-bc=1$, then one obtains
a frieze pattern over the rational function field $\mathbb{Q}(x_1,\ldots,x_n)$ with entries
being the cluster variables of a cluster algebra of Dynkin type $A_n$. See Figure \ref{fig:A2} for
the case $n=2$.

\begin{figure} 
 $$   \begin{array}{ccccccccccc}
 &  & \ddots &  &  &  &  &  & &  & \\
 &  &  & &  &  &  &  &  & & \\
 0 & 1 & x_1 & x_2 & 1 & 0 &  &  &  &  & \\
& 0 & 1 & \frac{1+x_2}{x_1} & \frac{1+x_1+x_2}{x_1x_2} & 1 & 0 &  & & & \\
 & & 0 & 1 & \frac{1+x_1}{x_2} & x_1 & 1 & 0 & & & \\
 & &  & 0 & 1 & x_2 & \frac{1+x_2}{x_1} & 1 & 0 & & \\
 & &  &  & 0 & 1 & \frac{1+x_1+x_2}{x_1x_2} & \frac{1+x_1}{x_2} & 1 & 0 & \\
 &  &  &  &  &  &  &  & \ddots &  &
\end{array}
$$
\caption{The frieze pattern corresponding to the cluster algebra of Dynkin type $A_2$.}
\label{fig:A2}
\end{figure}

These frieze patterns corresponding to cluster algebras of Dynkin type $A_n$
are periodic of period $n+3$ (cf.
Proposition \ref{perck}), and since they are tame, they have an additional glide symmetry. 
In other words, a fundamental domain
for the entries in these frieze patterns (containing each cluster variable once) is given
by a triangular shape.
\smallskip

Note that the pairs of indices of the elements in this fundamental region are in bijection 
with pairs of different numbers from $\{0,\ldots,n+2\}$, and these are in bijection with the edges 
and the diagonals of a regular $(n+3)$-gon $\mathcal{P}$ (with vertices labelled consecutively).

It is well known that there is an alternative description of frieze patterns with such a glide symmetry.
Namely, such a frieze pattern can be seen as an assignment of numbers, called labels, to the edges and the
diagonals of $\mathcal{P}$ such that the labels of edges are 1 and such that for each pair of crossing
diagonals $(i,j)$ and $(k,\ell)$ the Ptolemy condition 
$$c_{i,j}c_{k,\ell} = c_{i,k}c_{j,\ell} + c_{i,\ell}c_{j,k}
$$
is satisfied; see Figure \ref{fig:Ptolemy}.

\begin{figure}
\[
  \begin{tikzpicture}[auto]
    \node[name=s, shape=regular polygon, regular polygon sides=20, minimum size=5cm, draw] {}; 
    \draw[thick, dotted] (s.corner 5) to node[very near start,below=24pt] {$c_{i,k}$} (s.corner 15);
    \draw[shift=(s.corner 5)] node[left] {$i$};
    \draw[shift=(s.corner 2)] node[below=17pt] {$c_{i,\ell}$};
    \draw[shift=(s.corner 12)] node[above=14pt] {$c_{j,k}$};
    \draw[shift=(s.corner 17)] node[left=11pt] {$c_{j,\ell}$};
    \draw[shift=(s.corner 2)] node[below=50pt] {$c_{i,j}$};
    \draw[shift=(s.corner 11)] node[above=40pt] {$c_{k,\ell}$};
    \draw[shift=(s.corner 15)] node[right] {$j$};
    \draw[thick, dotted] (s.corner 9) to (s.corner 19);
    \draw[shift=(s.corner 9)] node[left] {$k$};
    \draw[shift=(s.corner 19)] node[right] {$\ell$};
    \draw[thick] (s.corner 5) to (s.corner 9);
    \draw[thick] (s.corner 5) to (s.corner 19);
    \draw[thick] (s.corner 15) to (s.corner 9);
    \draw[thick] (s.corner 15) to (s.corner 19);
  \end{tikzpicture} 
\]
\caption{The Ptolemy condition.}
\label{fig:Ptolemy}
\end{figure}

Any maximal set of pairwise non-crossing diagonals of $\mathcal{P}$ is called a {\em cluster}.
Note that the diagonals of a cluster form a triangulation of the polygon $\mathcal{P}$. 
If $\mathcal{F}$ is a tame frieze pattern of height $n$ we also call any set of entries of $\mathcal{F}$
placed at positions corresponding to a cluster of $\mathcal{P}$ a {\em cluster of $\mathcal{F}$}.

This notion is perfectly in line with the fundamental notion of cluster in the cluster algebras 
of Dynkin type $A$. In fact, a subset of cluster variables forms a cluster of the cluster algebra precisely
when the positions in the frieze pattern yield a triangulation, i.e.\ a cluster of $\mathcal{P}$
(and mutation of clusters corresponds to flipping diagonals in the triangulation).

A deep theorem on cluster algebras, the Laurent phenomenon, states that if you start with placing 
indeterminates $x_1,\ldots,x_n$ on the positions of a cluster then all cluster variables are
rational functions with denominator a monomial $x_1^{a_1}\ldots x_n^{a_n}$; see for instance
\cite[Theorem 3.3.1]{FWZ}.

We want to study what happens when the indeterminates, placed on the positions of a cluster,
are specialized to values of $\mathbb{C}$.  
From the above mentioned Laurent phenomenon it is clear that one can not specialize an indeterminate
to zero (because this resulted in undefined denominators).

We call a tame frieze pattern over $\mathbb{C}$ of height $n$ a {\em specialization} of the cluster 
algebra of Dynkin type $A_n$ if the frieze pattern contains a cluster with only non-zero entries.
\medskip

The main result of this section will describe the frieze patterns over $\mathbb{C}$ which are
specializations of the cluster algebras of Dynkin type $A$.
For proving this result we need some preparation.

\begin{propo}\label{cor:t}
Suppose $m\in \mathbb{N}$ is even.
Let $\underline{c}=(c_1,\ldots,c_m)\in \CC^m$ be a quiddity cycle, and $0\ne t\in\CC$. Then the following
hold:
\begin{enumerate}
\item[{(a)}] $\underline{c'}=(c'_1,\ldots,c'_m)=(t c_1, t^{-1} c_2, t c_3, \ldots, t^{-1} c_m)$ 
is a quiddity cycle as well.
\item[{(b)}] The 
frieze patterns
$\mathcal{F}$ and $\mathcal{F}'$ corresponding to $\underline{c}$ and $\underline{c'}$ have their zero
entries at exactly the same positions.
\end{enumerate}
\end{propo}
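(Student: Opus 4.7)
My plan for part (a) rests on the single matrix identity
\begin{equation*}
\eta(tc)\,\eta(t^{-1}d) \;=\; D\,\eta(c)\,\eta(d)\,D^{-1}, \qquad D := \begin{pmatrix} t & 0 \\ 0 & 1 \end{pmatrix},
\end{equation*}
which is verified by a direct $2\times 2$ matrix computation. Since $m$ is even, the product $\prod_{k=1}^{m}\eta(c'_k)$ decomposes into $m/2$ consecutive pairs $\eta(tc_{2j-1})\eta(t^{-1}c_{2j})$. Applying the identity to each pair, the intermediate factors $D^{-1}D$ cancel, and I obtain
\begin{equation*}
\prod_{k=1}^{m} \eta(c'_k) \;=\; D\Bigl(\prod_{k=1}^{m}\eta(c_k)\Bigr)D^{-1} \;=\; D(-I)D^{-1} \;=\; -I,
\end{equation*}
so $\underline{c'}$ is again a quiddity cycle.

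For part (b) I invoke Proposition~\ref{perck}(2), which identifies the entries of the frieze pattern with $(M_{i,j})_{1,1}$, where $M_{i,j}=\prod_{k=i}^{j}\eta(c_k)$ is taken with respect to the periodic extension of the cycle. The plan is to show that the analogous product $M'_{i,j}$ for $\underline{c'}$ admits a factorization
\begin{equation*}
M'_{i,j} \;=\; A\,M_{i,j}\,B
\end{equation*}
with $A,B$ diagonal matrices whose top-left entries are non-zero powers of $t$. Since $A$ and $B$ are diagonal, $(AM_{i,j}B)_{1,1}=A_{1,1}\,(M_{i,j})_{1,1}\,B_{1,1}$; as $A_{1,1}B_{1,1}\ne 0$, this forces $(M'_{i,j})_{1,1}=0$ exactly when $(M_{i,j})_{1,1}=0$, which is statement (b).

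To produce the factorization I case-split on the parities of $i$ and $j$. When $j-i+1$ is even I pair consecutive factors and reapply the identity from (a) to get $M'_{i,j}=D^{\varepsilon}M_{i,j}D^{-\varepsilon}$ with $\varepsilon\in\{\pm 1\}$. When $j-i+1$ is odd, one factor is left unpaired at the end; here I plug in the elementary decompositions $\eta(tc)=D\,\eta(c)\,\mathrm{diag}(1,t^{-1})$ and $\eta(t^{-1}c)=D^{-1}\eta(c)\,\mathrm{diag}(1,t)$, which contribute an extra diagonal factor of the form $\mathrm{diag}(1,t^{\pm 1})$ on the right of $M_{i,j}$. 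Nothing here is conceptually hard; the only real obstacle is the parity bookkeeping---one must verify in each of the four cases that the resulting $A$ and $B$ both have non-zero $(1,1)$-entry. Once that is checked, the whole argument reduces to the one identity established for (a).
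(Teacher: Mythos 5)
Your argument is correct and follows essentially the same route as the paper: both proofs realize the passage from $\underline{c}$ to $\underline{c'}$ as conjugation by a diagonal matrix, establish (a) by letting the intermediate diagonal factors cancel in the length-$m$ product, and get (b) from Proposition \ref{perck} plus the observation that diagonal factors on either side of $M_{i,j}$ scale its $(1,1)$-entry by a non-zero power of $t$, so the zeros sit at the same positions. The only difference is cosmetic: the paper conjugates each single factor by $T=\mathrm{diag}(\sqrt t,\,1/\sqrt t)$ via $\eta(tc)=T\eta(c)T$ and $\eta(t^{-1}c)=T^{-1}\eta(c)T^{-1}$, which yields the uniform formula $M'_{i,j}=T^{\delta}M_{i,j}T^{\epsilon}$ with $\delta,\epsilon\in\{\pm1\}$ and sidesteps your pairing and leftover-factor case analysis, whereas your choice $D=\mathrm{diag}(t,1)$ avoids introducing a square root of $t$.
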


\begin{proof}
(a) For $t\in \mathbb{C}\setminus \{0\}$ we consider the complex matrix
\[ T := \begin{pmatrix} \sqrt{t} & 0 \\ 0 & \frac{1}{\sqrt{t}} \end{pmatrix}. \]
Then a simple computation yields that for every $c\in \CC$ we have
\begin{equation}\label{TT}
\eta(tc) = T \eta(c) T, \quad \eta\left(t^{-1}c\right) = T^{-1} \eta(c) T^{-1}.
\end{equation}
This implies that 
\begin{equation*}
\eta(tc_1)\eta(t^{-1}c_2)\ldots \eta(tc_{m-1})\eta(t^{-1}c_m) = T
\eta(c_1)\eta(c_2) \ldots \eta(c_{m-1})\eta(c_m) T^{-1} = 
\begin{pmatrix} -1 & 0 \\ 0 & -1 \end{pmatrix} 
\end{equation*}
where for the last equality we have used the assumption that $\underline{c}$ is a quiddity cycle
(cf.\ Definition \ref{def:quiddity}). Thus, $\underline{c}'$ is again a quiddity cycle.
\smallskip

\noindent
(b) We denote the entries in the frieze patterns $\mathcal{F}$ and $\mathcal{F}'$ by 
$c_{i,j}$ and $c'_{i,j}$, respectively. According to Proposition \ref{perck} the entries in the
frieze patterns are given by the $(1,1)$-entry in some product of $\eta$-matrices. More precisely,
we get  by using Equation (\ref{TT}) that for all $i\le j-2$:
$$c'_{i,j} = (\prod_{k=i}^{j-2} \eta(c'_k))_{1,1} = (T^{\delta} (\prod_{k=i}^{j-2} \eta(c_k))T^{\epsilon})_{1,1}
$$
with some $\delta,\epsilon\in \{1,-1\}$. But it is straightforward to check that for every complex
$2\times 2$-matrix $A$ we have 
$$ (TAT)_{1,1} = t A_{1,1}, \quad (TAT^{-1})_{1,1} = A_{1,1},
\quad(T^{-1}AT)_{1,1} = A_{1,1}, \quad(T^{-1}AT^{-1})_{1,1} = t^{-1} A_{1,1}.
$$
So the entries in the frieze patterns corresponding to $\underline{c}$ and 
$\underline{c}'$ only differ up to (possibly) multiplication with $t$ or $t^{-1}$.
In particular, the non-zero entries appear at the same positions.
\end{proof}

\begin{lemma}\label{lem:00111}
Suppose that $m\in \mathbb{N}$ is odd.
Let $\underline{c}=(c_1,\ldots,c_m)\in \CC^m$ be a quiddity cycle, and assume
that for all $i=1,\ldots,m$ we have $c_i c_{i+1}=1$ or $c_i=c_{i+1}=0$.
Then $\underline{c}=(1,\ldots,1)$ and $m\equiv 3\:(\md 6)$.
\end{lemma}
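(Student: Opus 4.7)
The plan is to show first that no entry in the quiddity cycle can be zero, and then to deduce that the cycle must be constantly equal to $\pm 1$, after which a direct order computation of $\eta(\pm 1)$ settles both the sign and the congruence on $m$.

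First I would dispose of the zero entries. Suppose some $c_i=0$. Since the hypothesis requires either $c_{i-1}c_i = 1$ or $c_{i-1}=c_i=0$, the first option is impossible when $c_i=0$, so $c_{i-1}=0$; symmetrically $c_{i+1}=0$. Propagating this around the cycle gives $\underline c = (0,\ldots,0)$. Since $\eta(0)=\begin{pmatrix} 0 & -1 \\ 1 & 0\end{pmatrix}$ has order $4$, the identity $\eta(0)^m = -I$ forces $m\equiv 2\pmod 4$, contradicting the assumption that $m$ is odd.

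Hence every $c_i\ne 0$ and $c_ic_{i+1}=1$ holds for all $i$ (indices mod $m$). This means $c_{i+1}=c_i^{-1}$, so the cycle alternates between $c_1$ and $c_1^{-1}$. Because $m$ is odd, the wrap-around entry $c_m$ equals $c_1$, and the cyclic relation $c_mc_1=1$ forces $c_1^2=1$, i.e.\ $c_1\in\{1,-1\}$, and thus $\underline c = (1,\ldots,1)$ or $\underline c=(-1,\ldots,-1)$.

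Finally I would compute the orders of $\eta(1)$ and $\eta(-1)$. The characteristic polynomial of $\eta(1)$ is $x^2-x+1$, whose roots are primitive $6$th roots of unity; hence $\eta(1)^6 = I$ and in fact $\eta(1)^3 = -I$. Therefore $\eta(1)^m = -I$ if and only if $m\equiv 3\pmod 6$. The characteristic polynomial of $\eta(-1)$ is $x^2+x+1$, giving $\eta(-1)^3=I$; the powers $I,\eta(-1),\eta(-1)^2$ do not include $-I$, so no exponent yields $-I$. This rules out $c_1=-1$ and confirms that $\underline c=(1,\ldots,1)$ with $m\equiv 3\pmod 6$, completing the proof. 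None of the steps is really an obstacle; the only thing to watch is the wrap-around step where oddness of $m$ is used, which is exactly where the $c_1^2=1$ constraint is extracted.
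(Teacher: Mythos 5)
Your proof is correct, and in the second half it takes a genuinely different route from the paper. The treatment of the zero case is the same in both (all entries must vanish, and $\eta(0)^2=-I$ forces $m\equiv 2\pmod 4$, contradicting $m$ odd). In the non-zero case, however, you exploit the cyclic wrap-around instance of the hypothesis, $c_mc_1=1$ with $c_m=c_1$ (as $m$ is odd), to pin down $c_1=\pm1$ immediately, and then finish by computing the orders of the two concrete matrices $\eta(1)$ (order $6$, with $\eta(1)^3=-I$, giving $m\equiv 3\pmod 6$) and $\eta(-1)$ (order $3$, whose powers never equal $-I$, ruling out $(-1,\ldots,-1)$). The paper instead keeps $c_1$ arbitrary non-zero: it observes that the alternating product satisfies $\bigl(\eta(c_1)\eta(c_1^{-1})\bigr)^3=I$, reduces to the cases $m\in\{1,3,5\}$, and a direct matrix computation there forces $m\equiv 3\pmod 6$ and $c_1=1$. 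Your reliance on the condition at $i=m$ is legitimate, since the hypothesis ranges over $i=1,\ldots,m$ and $c_{m+1}$ can only be read cyclically as $c_1$ (which is also how the lemma is invoked in the proof of Theorem \ref{thm:clusters}); still, it is worth noting that the paper's argument never uses that wrap-around instance, so it proves a formally slightly stronger statement, whereas your version is shorter and replaces the generic-$c_1$ computations by two order computations for fixed matrices.
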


\begin{proof}
We claim that $c_1\neq 0$. In fact, if $c_1=0$ then it follows directly from the assumption that $c_1=c_2=\ldots=c_m=0$, i.e.
$\underline{c}=(0,\ldots,0)$. However, one easily computes that
$\eta(0)\eta(0)$ is the negative of the identity matrix. This implies that 
$(0,\ldots,0)\in \mathbb{C}^m$ is a quiddity cycle if and only if $m\equiv 2\:(\md 4)$.
Since $m$ is odd by assumption, we get a contradiction.

So we assume now that $c_1\ne 0$. But then $c_2=c_1^{-1}$ by the assumption, and 
inductively $\underline{c} = (c_1,c_1^{-1},c_1,\ldots,c_1^{-1},c_1)$ (use that $m$ is odd).
By assumption, $\underline{c}$ is a quiddity cycle, i.e.\ $\prod_{k=1}^m \eta(c_k)$ is
minus the identity matrix (cf.\ Definition \ref{def:quiddity}).  
We claim that this can only happen if $m\equiv 3\:(\md 6)$ and that in this case $c_1=1$, i.e.
$\underline{c}=(1,\ldots,1)$. In fact, one easily computes
that $\eta(c_1)\eta(c_1^{-1})\eta(c_1)\eta(c_1^{-1})\eta(c_1)\eta(c_1^{-1})$ equals the 
identity matrix. So it suffices to consider the cases $m\in \{1,3,5\}$.
Clearly, this can not
happen for $m=1$. For $m=3$ we have 
$$\eta(c_1)\eta(c_1^{-1})\eta(c_1) = \begin{pmatrix} -c_1 & 0 \\ 0 & -c_1^{-1} \end{pmatrix}
$$
and this implies $c_1=1$. For $m=5$ we compute
$$(\eta(c_1)\eta(c_1^{-1})^2\eta(c_1) = \begin{pmatrix} 0 & 1 \\ -1 & c_1^{-1} \end{pmatrix}
$$
which is clearly not a scalar multiple of the identity matrix.
\end{proof}

\begin{figure}
\begin{center}
\begin{minipage}[b]{0.4\textwidth}
\vspace{20pt}
\includegraphics[width=\textwidth]{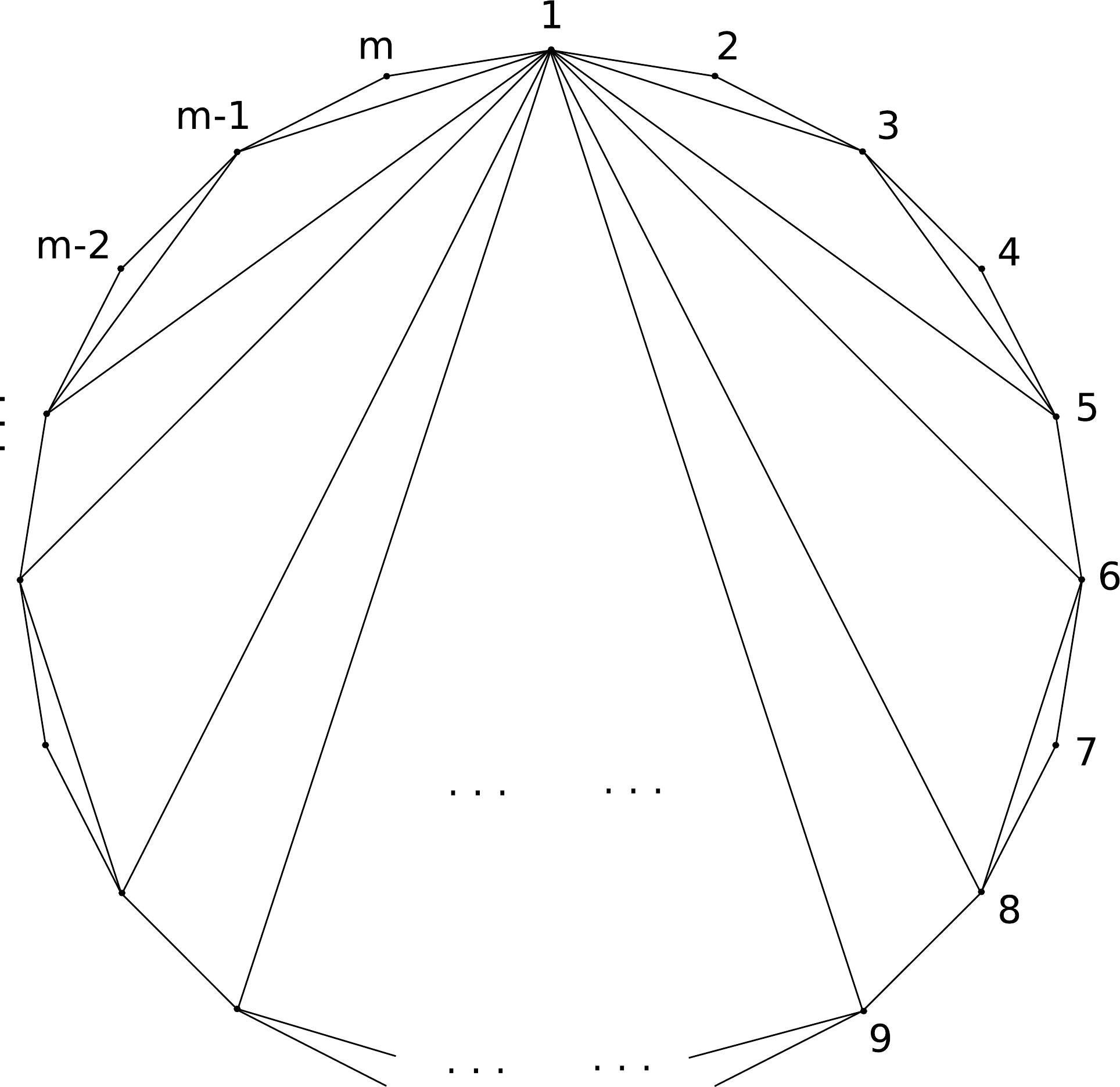}
\end{minipage}
\end{center}
\caption{The cluster appearing in Proposition \ref{prop:111} \label{p111}}
\end{figure}

\begin{propo}\label{prop:111}
Let $\underline{c}=(1,\ldots,1)\in \CC^m$ be a quiddity cycle and let $\Fc$ be the corresponding 
tame frieze pattern. Then there exists a cluster of $\Fc$ without zero entry.
\end{propo}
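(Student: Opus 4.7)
The plan is to translate the claim into a combinatorial statement about triangulations of the $m$-gon $\mathcal{P}$. First I would compute directly that $\eta(1)^3 = -I$, hence $\eta(1)^6 = I$, and that the sequence $(\eta(1)^\ell)_{1,1}$ for $\ell = 0, 1, 2, 3, 4, 5$ reads $1, 1, 0, -1, -1, 0$. Since $\underline{c} = (1, \dots, 1)$ is a quiddity cycle, $\eta(1)^m = -I$ forces $m \equiv 3 \pmod 6$ (which also follows from Lemma \ref{lem:00111}); write $m = 6k + 3$. By Proposition \ref{perck}(2), the frieze entry $c_{i,j}$ equals $(\eta(1)^{j - i - 1})_{1,1}$, so $c_{i,j} = 0$ precisely when $j - i \equiv 0 \pmod 3$.

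Since a cluster of $\Fc$ corresponds to a triangulation of $\mathcal{P}$, the task reduces to exhibiting a triangulation of the $(6k+3)$-gon whose $m - 3 = 6k$ diagonals $(i, j)$ all satisfy $j - i \not\equiv 0 \pmod 3$. The case $k = 0$ is trivial (there are no diagonals). For $k \geq 1$ I would construct such a triangulation explicitly: take the \emph{restricted fan} from vertex $0$, consisting of all diagonals $(0, j)$ for $2 \leq j \leq m - 2$ with $j \not\equiv 0 \pmod 3$ (there are $4k$ of them), together with the short diagonals $(3i - 1, 3i + 1)$ for $i = 1, \dots, 2k$. The skipped fan diagonals $(0, 3i)$ leave quadrilateral regions with vertices $\{0, 3i - 1, 3i, 3i + 1\}$, each filled precisely by the corresponding short diagonal; altogether this yields $4k + 2k = 6k$ non-crossing diagonals, i.e.\ a valid triangulation. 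Every used diagonal has $j - i \not\equiv 0 \pmod 3$: the fan ones by construction, the short ones since $j - i = 2$.

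The main obstacle is arriving at the construction. A naive full fan from vertex $0$ fails because it uses the bad diagonals $(0, 3i)$; worse, the natural attempt of first cutting off the class-$1$ ears $(3i, 3i+1, 3i+2)$ removes every vertex with residue $1$ modulo $3$, so the remaining polygon contains only two residue classes and inevitably forces a bad diagonal. The key insight is that two consecutive fan indices can never both be bad (since the bad indices are spaced three apart), so each omitted fan diagonal leaves only a quadrilateral gap, and the non-fan diagonal of such a quadrilateral is short of difference $2$ and therefore automatically good.
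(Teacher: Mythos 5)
Your proposal is correct and follows essentially the same route as the paper: compute the entries of the frieze via powers of $\eta(1)$ modulo $6$, observe that zeroes occur exactly at positions $(i,j)$ with $j-i\equiv 0\ (\mathrm{mod}\ 3)$, and exhibit a triangulation avoiding such diagonals by a fan from one vertex with the bad fan diagonals replaced by short diagonals of difference $2$. Your "restricted fan" is, up to relabelling the vertices, exactly the triangulation $C$ given in the paper's proof.
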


\begin{proof}
We have seen in the proof of Lemma \ref{lem:00111} that if $(1,\ldots,1)$ is a quiddity cycle then 
$m\equiv 3 \:(\md 6)$. Let $c_{i,j}$ denote the entries in the corresponding frieze pattern.
From Proposition \ref{perck} we know that these entries appear as the top left entry in 
matrices of the form $\eta(1)^{k}$ for $1\le k\le 6$
(note that since $\eta(1)^6$ is the identity matrix, the exponents can be reduced modulo 6).
Then an easy computation yields that for every $1\le i<j\le m$ we have
\[ c_{i,j} = \begin{cases}
1 & \text{if }j-i \equiv 1 \:(\md  6) \text{ or } j-i \equiv 2 \:(\md  6), \\
0 & \text{if }j-i \equiv 0 \:(\md  6) \text{ or } j-i \equiv 3 \:(\md  6), \\
-1 & \text{if }j-i \equiv 4 \:(\md  6) \text{ or } j-i \equiv 5 \:(\md  6).
\end{cases} \]
Thus it suffices to find a triangulation $C$ of the regular $m$-gon 
such that $j-i \not\equiv 0 \:(\md  3)$ for all diagonals $(i,j)\in C$. For example,
writing $m=6\ell +3$ we can take 
$$
C= \{(1,3)\} \cup \{(1,3k-1),(1,3k)\,|\,2\le k\le 2\ell\}\cup \{(3k,3k+2)\,|\,1\le k\le 2\ell\}
\cup \{(1,6\ell+2)\},
$$
see also Figure \ref{p111}.
\end{proof}

The following theorem is the main result of this section
and explains the connection between tame frieze patterns and specializations of cluster variables of 
Dynkin type $A$.

\begin{theor}\label{thm:clusters}
For $m\ge 4$, let $\underline{c}=(c_1,\ldots,c_m)\in \CC^m$ be a quiddity cycle with at least one 
non-zero entry and let $\Fc$ be the corresponding tame
frieze pattern. Then there exists a cluster of $\mathcal{F}$ without zero entry.
In particular, every such quiddity cycle defines a specialization of the variables of a cluster algebra 
of Dynkin type $A$ to complex numbers.
\end{theor}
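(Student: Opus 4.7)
The plan is induction on $m \ge 4$. The base case $m=4$ is immediate from Example \ref{ex:quiddity}: every length-$4$ quiddity cycle has the form $(c,2/c,c,2/c)$ with $c\neq 0$, and any short diagonal already provides a zero-free cluster.

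For the inductive step with $m\ge 5$, I would set up a dichotomy suggested by Lemma \ref{lem:00111}: either (a) every consecutive pair $(c_i, c_{i+1})$ satisfies $c_ic_{i+1}=1$ or $c_i=c_{i+1}=0$, or (b) some index $i$ admits $c_ic_{i+1}\neq 1$ with $(c_i,c_{i+1})\neq (0,0)$. In case (a), the hypothesis of a non-zero entry propagates through $c_{i+1}=1/c_i$ to force $c_i\neq 0$ for all $i$ and $\underline{c}=(c,1/c,c,1/c,\ldots)$; for $m$ odd, Lemma \ref{lem:00111} collapses this to $(1,\ldots,1)$ and Proposition \ref{prop:111} supplies the desired cluster, while for $m$ even a direct computation shows $\eta(c)\eta(1/c)$ has characteristic polynomial $\lambda^2+\lambda+1$, so no positive power equals $-I$, ruling out that $\underline{c}$ is a quiddity cycle at all.

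In case (b), I would apply equation (\ref{eqauvb}) of Lemma \ref{auvb} with $u=c_i$, $v=c_{i+1}$; since $uv-1\neq 0$ the transformation is legitimate and produces a quiddity cycle $\underline{c}'$ of length $m-1$ in $\CC^{m-1}$ whose new middle entry is $c_ic_{i+1}-1\neq 0$. The inductive hypothesis then supplies a zero-free cluster $C'$ of the frieze $\Fc'$ attached to $\underline{c}'$, and I would lift $C'$ to a zero-free cluster $C$ of $\Fc$. The lifting is the step I expect to be the main obstacle: geometrically, the reduction corresponds to contracting one of the two adjacent vertices of the $m$-gon separating $c_i$ and $c_{i+1}$, so one must argue that the correspondence between diagonals of $\Fc'$ and diagonals of $\Fc$ preserves zero positions (since the reduction formulas only involve the non-zero factor $c_ic_{i+1}-1$, much in the spirit of Proposition \ref{cor:t}) and that inserting one further diagonal in the length-$3$ window $(i,i+1,i+2,i+3)$, namely whichever of $(i,i+2)$ or $(i+1,i+3)$ has non-zero entry $c_i$ or $c_{i+1}$ (at least one does by case (b)), completes $C$ into a triangulation of size $m-3$ without zero entries.
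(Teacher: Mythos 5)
Your base case, your dichotomy, and your treatment of case (a) are sound; in particular the observation that for even $m$ an alternating cycle $(c,1/c,c,1/c,\ldots)$ cannot be a quiddity cycle, since $\eta(c)\eta(1/c)$ has characteristic polynomial $\lambda^2+\lambda+1$ and hence no power equal to $-I$, is a correct substitute for the paper's rescaling argument in that degenerate situation. The genuine gap is exactly where you feared: the lifting step in case (b). It is not true that the reduction of Lemma \ref{auvb}, Equation (\ref{eqauvb}), ``preserves zero positions''. That reduction merges the two vertices carrying $c_i,c_{i+1}$ into one vertex $\bar v$ carrying $c_ic_{i+1}-1$ and changes the two neighbouring quiddity entries additively by $\frac{1-c_{i+1}}{c_ic_{i+1}-1}$ and $\frac{1-c_i}{c_ic_{i+1}-1}$. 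A label of a diagonal of the reduced $(m-1)$-gon is the $(1,1)$-entry of a product of $\eta$'s over one of the two arcs between its endpoints, and it agrees with a label of $\mathcal{F}$ only when that arc contains the modified block entirely or not at all. Diagonals avoiding $\bar v$ are therefore fine, but for a diagonal incident to $\bar v$ both arcs cut through the modified block, and its label in $\mathcal{F}'$ need not coincide with, nor even have the same vanishing behaviour as, any label of $\mathcal{F}$ (there is no analogue here of the first-row identity $\eta(1)\eta(b)$ versus $\eta(b-1)$ that makes the remove-a-$1$ reduction of Proposition \ref{eta_rule} label-preserving on all surviving diagonals). Since the zero-free cluster supplied by your induction hypothesis may contain several diagonals through $\bar v$, your ``lift'' is not even a well-defined set of non-crossing diagonals of the $m$-gon with controlled labels, and the final step of inserting $(i,i+2)$ or $(i+1,i+3)$ presupposes that the quadrilateral $(i,i+1,i+2,i+3)$ is the only untriangulated region, i.e.\ that $\bar v$ is an ear of the reduced cluster --- which nothing in your argument guarantees.

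This is precisely the difficulty the paper's proof is engineered around, and it is why its case structure looks different from yours. The paper's workhorse is the remove-a-$1$ reduction, whose geometric counterpart (cutting off an ear with quiddity entry $1$) does preserve the labels of all surviving diagonals, so the lift there is immediate after adding the short diagonal labelled $1$. When no $1$ is present, the paper first manufactures one: by rescaling with Proposition \ref{cor:t} if $m$ is even, and, if $m$ is odd, by performing your Equation (\ref{eqauvb}) reduction once and then rescaling the resulting even-length cycle so that the merged entry becomes $1$; running the $1$-case construction at that entry produces a cluster of the reduced polygon which contains the short diagonal cutting off the merged vertex, so that all other cluster diagonals avoid $\bar v$ and transfer with unchanged labels, and only then is the extension by $(k,k+2)$ or $(k+1,k+3)$ legitimate. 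To repair your induction you would need an additional argument showing that a zero-free cluster of $\mathcal{F}'$ can always be chosen (or modified) to have the merged vertex as an ear; as written, that is the missing idea, and it is essentially the content of the paper's Cases 2 and 3 via Proposition \ref{cor:t}, Lemma \ref{lem:00111} and Proposition \ref{prop:111}.
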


\begin{proof}
We proceed by induction over the length $m$ of the quiddity cycle.

For $m=4$ every quiddity cycle has the form $(c_1,\frac{2}{c_1},c_1,\frac{2}{c_1})$ with $c_1\neq 0$
(cf.\ Example \ref{ex:quiddity}). The clusters of the corresponding quadrangle consist of only one diagonal, 
and this is labelled by $c_1$ or by $\frac{2}{c_1}$; since both are non-zero, the claim holds for $m=4$.

Suppose now that $m>4$.

Case 1: Assume first that there is a $k$ with $c_k=1$. By Proposition \ref{prop:111} we can assume 
that $\underline{c}$ contains an entry not equal to 1. So we can choose $k$ maximal such that
$c_k=1$ and $c_{k+1}\neq 1$.

From Corollary \ref{eta_rule}\,(a) we know
that $\underline{c}'=(c_1,\ldots,c_{k-2},c_{k-1}-1,c_{k+1}-1,c_{k+2},\ldots,c_m)$ is again a quiddity cycle.
By maximality of $k$, it has a non-zero entry, i.e.\ it satisfies the assumption of the theorem.
Now by induction hypothesis we find a cluster without zero for the frieze pattern $\mathcal{F}'$ 
corresponding to the quiddity 
cycle $\underline{c}'$ of length $m-1$. In other words, there is a triangulation of the $(m-1)$-gon
obtained from the $m$-gon by removing the vertex $k$ such that each diagonal has a non-zero label.
The diagonal $(k-1,k+1)$ in the $m$-gon
has label $c_k=1\neq 0$. So the cluster for $\mathcal{F}'$
may be extended to a cluster without zero of the original frieze pattern $\mathcal{F}$
by adding the diagonal $(k-1,k+1)$.
\smallskip

Case 2: Suppose that there is no $1$ in the quiddity cycle and that $m$ is even.
Then we choose any $k$ with $c_k\ne 0$.
Applying Proposition \ref{cor:t} with $t=c_k$ (if $k$ is even) or $t=c_k^{-1}$ (if $k$ is odd)
we obtain a quiddity cycle of the same length, with the same clusters without zero and with a $1$ 
at position $k$. We are thus in Case 1 and hence the claim follows.
\smallskip

Case 3: Finally, suppose now that there is no $1$ in the quiddity cycle and that $m$ is odd.
Moreover, we may assume that there is a $k$ such that $c_k\ne 0$ or $c_{k+1}\ne 0$ 
and such that $c_kc_{k+1}-1\ne 0$ (otherwise we are finished by Lemma \ref{lem:00111} and Proposition \ref{prop:111}). We consider the quiddity cycle (see Lemma \ref{auvb}, Equation (\ref{eqauvb}))
$$\underline{c}'' = (c_1,\ldots,c_{k-2},c_{k-1}+\frac{1-c_{k+1}}{c_kc_{k+1}-1}, c_kc_{k+1}-1, c_{k+2}+\frac{1-c_k}{c_kc_{k+1}-1},c_{k+3},\ldots,c_m). $$
Using Proposition \ref{cor:t} (notice that $\underline{c}''$ has even length) we get a quiddity cycle 
with a $1$ instead of the entry $c_kc_{k+1}-1$. It has the same length, with the same clusters without zero; we are thus in Case 1 and find a suitable cluster for $\underline{c}''$. In particular, this cluster contains the diagonal corresponding to the entry $c_kc_{k+1}-1$. Extending this triangulation by the diagonal $(k,k+2)$ if $c_k\ne 0$ or $(k+1,k+3)$ if $c_{k+1}\ne 0$ yields a cluster for the original quiddity cycle of length $m$, since the diagonal corresponding to the entry $c_kc_{k+1}-1$ is now $(k,k+3)$.
\end{proof}

\begin{examp}
The following frieze pattern is tame 
but every cluster contains 
a zero entry:
\[
\begin{array}{rrrrrrrrrrrr}
0 & 1 & 0 & -1 & 0 & 1 & 0 &    &    &    &    &   \\
   & 0 & 1 & 0 & -1 & 0 & 1 & 0 &    &    &    &   \\
   &    & 0 & 1 & 0 & -1 & 0 & 1 & 0 &    &    &   \\
   &    &    & 0 & 1 & 0 & -1 & 0 & 1 & 0 &    &   \\
   &    &    &    & 0 & 1 & 0 & -1 & 0 & 1 & 0 &   \\
   &    &    &    &    & 0 & 1 & 0 & -1 & 0 & 1 & 0
\end{array}
\]
By Theorem \ref{thm:clusters} this is (up to repeating this picture periodically) the only such example.
\end{examp}

\section{Frieze patterns over $\mathbb{Z}$}
\label{sec:Zfriezes}

In this section we describe the quiddity cycles (and hence tame frieze patterns, cf.\ Proposition \ref{perck})
over the integers $\mathbb{Z}$. As a special case this includes the classic Conway-Coxeter
frieze patterns over $\mathbb{N}$ and also more recent work of Fontaine \cite{F14} on frieze patterns
over $\mathbb{Z}\setminus\{0\}$. It turned out that over $\mathbb{Z}\setminus\{0\}$
only very few new frieze patterns appear in addition to the Conway-Coxeter ones and the new ones 
are closely linked to the old ones. As we will show in this section
the situation changes drastically when zeroes are allowed, i.e.\ when considering quiddity cycles and
frieze patterns over $\mathbb{Z}$. Then a plethora of new frieze patterns emerges. Still, we will 
provide in the next section a nice combinatorial model for obtaining all quiddity cycles over $\mathbb{Z}$
from certain labelled triangulations.
\smallskip

Before we prove the (more technical) theorem (Thm.\ \ref{Zfriezered}) giving reductions of quiddity cycles, we consider a bigger class of cycles which contains our quiddity cycles.

\begin{defin} \label{def:idminusid}
Let $\varepsilon\in \{\pm 1\}$.
An \emph{$\varepsilon$-cycle} is a sequence $(c_1,\ldots,c_m)\in \ZZ^m$ satisfying
\begin{equation}
\prod_{k=1}^{m} \eta(c_k) = \begin{pmatrix}\varepsilon & 0 \\ 0 & \varepsilon \end{pmatrix}.
\end{equation}
\end{defin}

Such cycles with natural numbers as entries have recently been classified by Ovsienko  in terms
of new combinatorial objects called $3d$-dissections \cite{Ovs17}.

\begin{theor}
Let $(c_1,\ldots,c_m)\in \ZZ^m$ be an $\varepsilon$-cycle, $\varepsilon\in\{\pm 1\}$. Then we have at least one of the following cases:
\begin{enumerate}\setcounter{enumi}{-1}
\item\label{I0} $m=2$ and $(c_1,\ldots,c_m)=(0,0)$.
\item\label{I1} There exists an index $k$ with $c_k=1$ and 
$$(c_1,\ldots,c_{k-2},c_{k-1}-1,c_{k+1}-1,c_{k+2}\ldots,c_m)$$
is an $\varepsilon$-cycle (of length $m-1$).
\item\label{I2} There exists an index $k$ with $c_k=0$ and 
$$(c_1,\ldots,c_{k-2},c_{k-1}+c_{k+1},c_{k+2}\ldots,c_m)$$ 
is a $-\varepsilon$-cycle (of length $m-2$).
\item\label{I3} There exists an index $k$ with $c_k=-1$ and 
$$(c_1,\ldots,c_{k-2},c_{k-1}+1,c_{k+1}+1,c_{k+2}\ldots,c_m)$$ 
is a $-\varepsilon$-cycle (of length $m-1$).
\end{enumerate}
\end{theor}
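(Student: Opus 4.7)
The strategy is to combine Corollary \ref{cor:small} with the substitution identities from Section \ref{sec:transform}. Since an $\varepsilon$-cycle satisfies $\prod_{k=1}^m \eta(c_k)=\varepsilon I$ and $\varepsilon I$ is a scalar multiple of the identity, Corollary \ref{cor:small} applies and produces at least two indices with $|c_k|<2$. For integer entries this forces $c_k\in\{-1,0,1\}$, and each of these three values matches one of the three reductions listed in the theorem. So the plan is: locate such a small entry, then unroll the relevant three-term identity for $\eta(c_{k-1})\eta(c_k)\eta(c_{k+1})$.

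First I would dispose of the degenerate lengths. No $\varepsilon$-cycle exists for $m=1$ (the off-diagonal entries of $\eta(c_1)$ are $\pm 1$, never zero). For $m=2$, the direct computation in Example \ref{ex:quiddity} together with Remark \ref{rem:eta}(i) shows that the only possibility is $(c_1,c_2)=(0,0)$, which is exactly case (I0).

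For $m\ge 3$, pick any index $k$ (indices read cyclically modulo $m$) with $c_k\in\{-1,0,1\}$. If $c_k=1$, Proposition \ref{eta_rule}(a) rewritten as $\eta(a)\eta(1)\eta(b)=\eta(a-1)\eta(b-1)$ shows that substituting this identity inside $\prod_k \eta(c_k)=\varepsilon I$ leaves the matrix product unchanged; the reduced sequence of length $m-1$ is again an $\varepsilon$-cycle, giving case (I1). If $c_k=0$, equation (\ref{eqa0b}) gives $\eta(a)\eta(0)\eta(b)=-\eta(a+b)$; substituting this inside the defining identity yields an extra factor $-1$, so the resulting length-$(m-2)$ sequence $(c_1,\ldots,c_{k-2},c_{k-1}+c_{k+1},c_{k+2},\ldots,c_m)$ is a $(-\varepsilon)$-cycle, giving case (I2). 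If $c_k=-1$, Proposition \ref{eta_rule}(b) in the form $\eta(a)\eta(-1)\eta(b)=-\eta(a+1)\eta(b+1)$ again introduces a sign, so the length-$(m-1)$ reduction is a $(-\varepsilon)$-cycle, giving case (I3).

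The main substantive input is Corollary \ref{cor:small}; after it is invoked, the rest of the argument is a pattern match against the three substitution identities, which are already established. I expect the only subtle point to be bookkeeping: one must interpret $c_{k-1}$ and $c_{k+1}$ cyclically when $k\in\{1,m\}$, and for small $m$ (notably $m=3$) one should check that the candidate reduction listed in the theorem is indeed a meaningful shorter cycle (for instance, an $\varepsilon$-cycle of length $3$ of the form $(c_1,0,c_3)$ cannot occur, because $-\eta(c_1+c_3)$ is never $\pm I$; this only confirms that the corresponding case simply does not arise, rather than obstructing the proof).
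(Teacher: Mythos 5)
Your proposal is correct and follows essentially the same route as the paper's own (very terse) proof: invoke Corollary \ref{cor:small} to find an entry in $\{-1,0,1\}$, settle $m=2$ separately, and then substitute the identities of Proposition \ref{eta_rule} and Equation (\ref{eqa0b}) into the defining product, tracking the sign that turns an $\varepsilon$-cycle into a $-\varepsilon$-cycle. Your extra bookkeeping (cyclic indexing, $m=1$, $m=3$) only makes explicit what the paper leaves implicit.
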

\begin{proof}
By Corollary \ref{cor:small}, there exists a $k$ with $|c_k|<2$. But $c_k\in\ZZ$, thus $c_k\in\{-1,0,1\}$.
If $m=2$ then the cycle is $(0,0)$ (see for example the proof of Cor.\ \ref{cor:small}).
Otherwise, the rules in Equation (\ref{eqa0b})
and Corollary \ref{eta_rule} always apply; when $c_k\in\{-1,0\}$ then a sign appears and an $\varepsilon$-cycle becomes a $-\varepsilon$-cycle.
\end{proof}

To show that there are even reductions producing cycles of the same type ($\varepsilon=-1$, i.e.\ within
the class of quiddity cycles),
we first need the following refinement of Corollary\ \ref{cor:small} in the special case $R=\ZZ$:

\begin{corol} \label{cor:small_Z}
Let $(c_1,\ldots,c_m)\in \ZZ^m$ be a quiddity cycle with $m>3$.
Then there are two indices $j,k\in\{1,\ldots,m\}$ with $|j-k|>1$ and $\{j,k\}\ne\{1,m\}$ such that 
$|c_j|<2$ and $|c_k|<2$.
\end{corol}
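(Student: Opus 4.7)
The plan is to argue by contradiction. Corollary~\ref{cor:small} yields two distinct indices with $c$-value of absolute value less than $2$; under the negation of the claim every such pair must be cyclically adjacent. Since in a cycle of length $m>3$ no three distinct indices can be pairwise cyclically adjacent, this forces exactly two small-value indices, and they are consecutive. After a cyclic rotation (which preserves the quiddity-cycle property by Remark~\ref{rem:qcreverse}(1)), I may assume they sit at positions $1,2$, so $c_1,c_2\in\{-1,0,1\}$ while $|c_i|\ge 2$ for $i=3,\ldots,m$.

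From the quiddity relation $\prod_{j=1}^m \eta(c_j)=-I$ I obtain the key tail product
\[
P:=\prod_{j=3}^{m}\eta(c_j)=[\eta(c_1)\eta(c_2)]^{-1}(-I)=\begin{pmatrix} 1 & -c_1 \\ c_2 & 1-c_1c_2 \end{pmatrix}.
\]
If $c_2=0$, I apply Lemma~\ref{bound2} directly to $(c_3,\ldots,c_m)$ with $d=1$, $e=0$: the hypothesis $|c_3\cdot 0-1|=1>0$ is automatic and $|c_m|\ge 2$ is clear, so for $m\ge 5$ there is $j\in\{4,\ldots,m-1\}$ with $|c_j|<2$, contradicting the standing assumption; the case $m=4$ is settled by reading the $(2,1)$-entry of $\eta(c_3)\eta(c_4)=P$, which equals $c_4=0$. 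If $c_2\ne 0$, the naive application can fail (already at $c_3=2c_2$), so I truncate one step from the right end and work with
\[
Q:=\prod_{j=3}^{m-1}\eta(c_j)=P\,\eta(c_m)^{-1}=\begin{pmatrix} c_1 & 1-c_1c_m \\ c_1c_2-1 & c_2+(1-c_1c_2)c_m \end{pmatrix}.
\]
Applying Lemma~\ref{bound2} to $(c_3,\ldots,c_{m-1})$ with $d=c_1$ and $e=c_1c_2-1$, a short case analysis on the six possibilities $(c_1,c_2)\in\{0,\pm 1\}\times\{\pm 1\}$ shows that the hypothesis $|c_3 e-d|>|e|$ always holds for integer $c_3$ with $|c_3|\ge 2$: when $c_1c_2=1$, one has $e=0$ and $d=\pm 1$ (trivial); when $c_1c_2=-1$, it reduces to $|2c_3\pm 1|>2$; when $c_1=0$, it reduces to $|c_3|>1$. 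This yields a small entry in $\{c_4,\ldots,c_{m-2}\}$ for $m\ge 6$; the short-length cases $m=4,5$ are handled by direct inspection of $\eta(c_3)=Q$ and $\eta(c_3)\eta(c_4)=Q$, which either pin $c_3$ or $c_4$ to a value of absolute value less than $2$ or else impose the impossible identity $c_1c_2=2$.

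The main obstacle is the case $c_2\ne 0$: direct application of Lemma~\ref{bound2} to the full tail $(c_3,\ldots,c_m)$ fails precisely at the integer edge $c_3=2c_2$, and the obvious iteration (peeling off consecutive $2c_2$'s and reapplying the lemma) is workable but cumbersome. The device of shifting to the truncated product $Q$ absorbs the problematic endpoint $c_m$ into the boundary data $(d,e)$, after which the boundary inequality of Lemma~\ref{bound2} holds uniformly across all six $(c_1,c_2)$-subcases with $|c_3|\ge 2$.
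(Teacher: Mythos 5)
Your proposal is correct and follows essentially the same route as the paper: both reduce via Corollary \ref{cor:small} and a rotation to the case where the two small entries are cyclically adjacent, then invoke Lemma \ref{bound2} on the remaining product to force a third entry of absolute value less than $2$, treating the short cycles ($m=4$, resp.\ $m=4,5$) by direct inspection. The only difference is tactical: the paper reverses the cycle so that Lemma \ref{bound2} always applies to a product with first column having $e=0$ (which makes the boundary inequality automatic and avoids your truncation by $\eta(c_m)^{-1}$ and the six-case check on $(c_1,c_2)$), but this does not change the substance of the argument.
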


\begin{proof}
Before entering the general argument we deal with the case $m=4$ separately. By Example \ref{ex:quiddity}
the quiddity cycles of length 4 have the form $(c_1,\frac{2}{c_1},c_1,\frac{2}{c_1})$ with 
$c_1\neq 0$. Over the integers $\mathbb{Z}$, the only possibilities are $c_1\in \{\pm 1,\pm 2\}$.
It is easy to see that in each of these possible cases the assertion holds.
\smallskip

So from now we assume that $m\ge 5$.
Corollary \ref{cor:small} already gives us two indices $i\ne j$ with $|c_i|<2$ and $|c_j|<2$.
If $|i-j|>1$ and 
$\{i,j\}\ne\{1,m\}$ then we are done.
Otherwise, by rotating the cycle (cf.\ Remark \ref{rem:qcreverse}), 
we may assume without loss of generality
that $i=1$ and $j=m$. We distinguish two cases.

First, if $c_1=c_m=0$, then by the definition of a quiddity cycle we get
\begin{equation}
\label{eq:L3.1}
\prod_{\ell =2}^{m-1} \eta(c_{\ell}) = \eta(0)^{-1} \begin{pmatrix} -1 & 0 \\ 0 & -1 \end{pmatrix}
\eta(0)^{-1} = \begin{pmatrix} 1 & 0 \\ 0 & 1 \end{pmatrix}.
\end{equation}
If $c_{m-1}=0$, we are done. Otherwise, we can apply Lemma \ref{bound2} to the product in equation
(\ref{eq:L3.1}); this 
yields an index $k\in \{3,\ldots,m-2\}$ with $|c_k|<2$ (note that we need our assumption
$m\ge 5$ for this $k$ to exist). Then clearly either $(1,k)$ or $(k,m)$ is a pair of indices as required.

Secondly, otherwise one of $|c_1|$ or $|c_m|$ is $1$.
Reversing the quiddity cycle if required (cf.\ Remark \ref{rem:qcreverse}) 
we may assume that $|c_m|=1$, hence Lemma \ref{bound2} yields a $k\in \{2,\ldots,m-1\}$ with $|c_k|<2$ and again we are finished because $m>3$.
\end{proof}

We can now state the main result of this section, guaranteeing adequate occurrences of entries $0$, $1$ or $-1$ in
quiddity cycles over $\mathbb{Z}$ and providing reductions to shorter quiddity cycles.
This theorem will be the crucial input for the combinatorial model to be presented in the next
section.

\begin{theor} \label{Zfriezered}
Let $(c_1,\ldots,c_m)\in \ZZ^m$ be a quiddity cycle. Then we have at least one of the following cases:
\begin{enumerate}\setcounter{enumi}{-1}
\item\label{T0} $m<4$ and $(c_1,\ldots,c_m)\in \{(0,0),(1,1,1)\}$.
\item\label{T1} There exists an index $k$ with $c_k=1$ and 
$$(c_1,\ldots,c_{k-2},c_{k-1}-1,c_{k+1}-1,c_{k+2}\ldots,c_m)$$
is a quiddity cycle (of length $m-1$).
\item\label{T2} There exists an index $k$ with $c_k=0$, $m$ is odd, and 
$$(-c_1,\ldots,-c_{k-2},-c_{k-1}-c_{k+1},-c_{k+2}\ldots,-c_m)$$ 
is a quiddity cycle (of length $m-2$).
\item\label{T3} There exists an index $k$ with $c_k=-1$, $m$ is even, and 
$$(-c_1,\ldots,-c_{k-2},-c_{k-1}-1,-c_{k+1}-1,-c_{k+2}\ldots,-c_m)$$ 
is a quiddity cycle (of length $m-1$).
\item\label{T4} There exist $j,k$ with $|j-k|>1$, $c_j=c_k=0$ and $$(c_1,\ldots,c_{j-2},c_{j-1}+c_{j+1},c_{j+2}\ldots,c_{k-2},c_{k-1}+c_{k+1},c_{k+2},\ldots,c_m)$$ 
is a quiddity cycle if $|j-k|>2$, or 
$$(c_1,\ldots,c_{j-2},c_{j-1}+c_{j+1}+c_{k+1},c_{k+2},\ldots,c_m)$$ 
is a quiddity cycle if (w.l.o.g.) $j+1=k-1$ (in both cases of length $m-4$).
\item\label{T5} There exist $j,k$ with $|j-k|>1$, $c_j=c_k=-1$
and 
$$(c_1,\ldots,c_{j-2},c_{j-1}+1,c_{j+1}+1,c_{j+2},\ldots,c_{k-2},c_{k-1}+1,c_{k+1}+1,c_{k+2},\ldots,c_m)$$ 
is a quiddity cycle if $|j-k|>2$ or 
$$(c_1,\ldots,c_{j-2},c_{j-1}+1,c_{j+1}+2,c_{k+1}+1,c_{k+2},\ldots,c_m)$$ 
is a quiddity cycle if (w.l.o.g.) $j+1=k-1$ (in both cases of length $m-2$).
\end{enumerate}
\end{theor}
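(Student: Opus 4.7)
The plan is to use Corollary \ref{cor:small_Z} to locate two integer entries of absolute value less than $2$, i.e., elements of $\{-1,0,1\}$, at cyclically non-adjacent positions, and then to apply the matrix identities of Proposition \ref{eta_rule}, Lemma \ref{auvb}, and Remark \ref{rem:qcreverse}(2) to produce the shorter cycles advertised in the theorem. The base case $m<4$ giving Case (0) is handled directly from Example \ref{ex:quiddity}: length $m=1$ is impossible, $m=2$ forces $(c_1,c_2)=(0,0)$, and $m=3$ forces $(1,1,1)$.

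Henceforth assume $m\ge 4$, and let $j,k$ be the pair of indices supplied by Corollary \ref{cor:small_Z} with $|j-k|>1$, $\{j,k\}\ne\{1,m\}$, and $c_j,c_k\in\{-1,0,1\}$. If \emph{any} entry of the cycle equals $1$, then Proposition \ref{eta_rule}(a) collapses $\eta(c_{\ell-1})\eta(1)\eta(c_{\ell+1})$ into $\eta(c_{\ell-1}-1)\eta(c_{\ell+1}-1)$, leaving the total product $-I$ unchanged; this yields Case (1) and is the only case requiring no parity hypothesis. Assume from now on that no $1$ appears, so $c_j,c_k\in\{-1,0\}$.

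When $c_j=c_k=0$, the identity $\eta(a)\eta(0)\eta(b)=-\eta(a+b)$ from Lemma \ref{auvb} can be applied at positions $j$ and $k$ independently: after rotating the cycle one may assume $1<j<k<m$; the local triples around $j$ and $k$ are disjoint whenever $|j-k|>2$, the two introduced minus signs multiply to $+1$, and the resulting sequence of length $m-4$ still has product $-I$. This gives the first version of Case (4); in the remaining subcase $j+1=k-1$, the same identity applied twice in succession to the block $(c_{j-1},0,c_{j+1},0,c_{k+1})$ merges it into $\eta(c_{j-1}+c_{j+1}+c_{k+1})$ and produces the second version. Case (5), where $c_j=c_k=-1$, is entirely analogous using Proposition \ref{eta_rule}(b) in its rewritten form $\eta(a)\eta(-1)\eta(b)=-\eta(a+1)\eta(b+1)$; once again the two minus signs cancel, so the resulting cycle of length $m-2$ has product $-I$.

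The mixed subcase --- one of $c_j,c_k$ equals $0$ and the other equals $-1$ --- is handled with a \emph{single} reduction followed by a global sign flip, using Remark \ref{rem:qcreverse}(2): negating a sequence of length $\ell$ with product $I$ yields a sequence with product $(-1)^\ell I$. If $m$ is odd, pick a position with entry $0$; the single reduction via $\eta(a)\eta(0)\eta(b)=-\eta(a+b)$ produces a sequence of length $m-2$ (odd) and product $+I$, which becomes a quiddity cycle after negating every entry, giving Case (2). If $m$ is even, pick a position with entry $-1$; the single reduction via $\eta(a)\eta(-1)\eta(b)=-\eta(a+1)\eta(b+1)$ produces a sequence of length $m-1$ (odd) and product $+I$, which becomes a quiddity cycle after negation, giving Case (3). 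The main subtlety --- and essentially the only point where I need the full strength of Corollary \ref{cor:small_Z} --- is the parity bookkeeping: each removal of a $0$ or a $-1$ flips the sign of the total matrix product, so one must either cancel the flip by pairing two reductions (Cases (4) and (5)) or absorb it by a parity-sensitive global negation (Cases (2) and (3)). The existence of two cyclically non-adjacent small entries ensures that for every value configuration and every parity of $m$ at least one of these patterns applies.
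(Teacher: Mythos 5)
Your proposal is correct and follows essentially the same route as the paper: locate two non-adjacent entries in $\{-1,0,1\}$ via Corollary \ref{cor:small_Z}, dispatch case (1) with Proposition \ref{eta_rule}(a), pair up two reductions via equation (\ref{eqa0b}) or Proposition \ref{eta_rule}(b) so the signs cancel in cases (4) and (5), and in the mixed $\{0,-1\}$ situation absorb the single sign flip by a global negation (Remark \ref{rem:qcreverse}(2)) whose effect depends on the parity of $m$, exactly as in cases (2) and (3) of the paper's proof. The only deviations (testing whether \emph{any} entry equals $1$ rather than just $c_j,c_k$, and negating after rather than before the reduction) are cosmetic, and your treatment of the cyclic boundary configurations is at the same level of informality as the paper's.
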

\begin{proof}
If $m<4$, then the only quiddity cycles are $(0,0)$ or $(1,1,1)$ (cf.\ Example \ref{ex:quiddity})
and we are in case (\ref{T0}).
Otherwise, $m>3$ and by
Corollary \ref{cor:small_Z},
there are two entries equal to $-1$, $0$, or $1$, say $c_j$ and $c_k$, with $|j-k|>1$ and $\{j,k\}\ne\{1,m\}$.
If one of them is $1$, then we are in case (\ref{T1}). If $\{c_j,c_k\}=\{0,-1\}$, then depending on whether $m$ is odd or even, we are in case (\ref{T2}) or case (\ref{T3}), respectively. Otherwise, $c_j\ne 1\ne c_k$, $c_j=c_k$, and one of (\ref{T4}) or (\ref{T5}) is the case.

It remains to prove that the reduced sequences given in the respective cases are indeed quiddity cycles.

For case (1) this follows immediately from Corollary \ref{eta_rule}.

For case (2), we have a quiddity cycle $(c_1,\ldots,c_{k-1},0,c_{k+1},\ldots,c_m)$.
Using Remark \ref{rem:qcreverse}\,(2), the assumption $m$ odd and equation (\ref{eqa0b}) we deduce that
\begin{eqnarray*}
\begin{pmatrix} -1 & 0 \\ 0 & -1 \end{pmatrix}
& = & - \eta(-c_1)\ldots\eta(-c_{k-1})\eta(0)\eta(-c_{k+1})\ldots \eta(-c_m) \\
& = & \eta(-c_1)\ldots \eta(c_{k-2})\eta(-c_{k-1}-c_{k+1}) \eta(-c_{k+2})\ldots \eta(-c_m)
\end{eqnarray*}
thus $(-c_1,\ldots,-c_{k-2},-c_{k-1}-c_{k+1},-c_{k+2}\ldots,-c_m)$ is a quiddity cycle.

For case (3), the quiddity cycle has the form $(c_1,\ldots,c_{k-1},-1,c_{k+1},\ldots,c_m)$.
Similar to the previous case, we use Remark \ref{rem:qcreverse}\,(2), the assumption $m$ even 
and Corollary \ref{eta_rule} to obtain
 \begin{eqnarray*}
\begin{pmatrix} -1 & 0 \\ 0 & -1 \end{pmatrix}
& = &  \eta(-c_1)\ldots \eta(-c_{k-2})  \eta(-c_{k-1})\eta(1)\eta(-c_{k+1})\eta(-c_{k+2})
\ldots \eta(-c_m) \\
& = & \eta(-c_1)\ldots \eta(-c_{k-2})\eta(-c_{k-1}-1)\eta(-c_{k+1}-1) \eta(-c_{k+2})\ldots \eta(-c_m)
\end{eqnarray*}
thus $(-c_1,\ldots,-c_{k-2},-c_{k-1}-1,-c_{k+1}-1,-c_{k+2}\ldots,-c_m)$ is a quiddity cycle.

For case (4), we can apply equation (\ref{eqa0b}) twice and the claims follow directly in either case (note that the 
two minus signs from equation (\ref{eqa0b}) cancel).

Finally, for case (5) we apply Corollary \ref{eta_rule}\,(b) twice and the claims about the quiddity cycles follow.
\end{proof}

\begin{examp}
The following examples illustrate some of the cases of Theorem \ref{Zfriezered}.
\begin{enumerate}
\item $m$ even, no $1$ and no $-1$ in the quiddity cycle:
\[ \begin{array}{rrrrrrrrrrrr}
0 & 1 & 0 & -1 & 2 & 1 & 0 &    &    &    &    &   \\
   & 0 & 1 & 2 & -5 & -2 & 1 & 0 &    &    &    &   \\
   &    & 0 & 1 & -2 & -1 & 0 & 1 & 0 &    &    &   \\
   &    &    & 0 & 1 & 0 & -1 & 2 & 1 & 0 &    &   \\
   &    &    &    & 0 & 1 & 2 & -5 & -2 & 1 & 0 &   \\
   &    &    &    &    & 0 & 1 & -2 & -1 & 0 & 1 & 0
\end{array} \]
\item $m$ odd, no $1$ in the quiddity cycle:
\[
\begin{array}{rrrrrrrrrr}
0 & 1 & -1 & 0 & 1 & 0 &    &    &    &   \\
   & 0 & 1 & -1 & -1 & 1 & 0 &    &    &   \\
   &    & 0 & 1 & 0 & -1 & 1 & 0 &    &   \\
   &    &    & 0 & 1 & 0 & -1 & 1 & 0 &   \\
   &    &    &    & 0 & 1 & -1 & 0 & 1 & 0
\end{array}
\]
\item $m$ odd, no $1$ and no $-1$ in the quiddity cycle:
\[ \begin{array}{rrrrrrrrrrrrrrrrrr}
0 & 1 & 0 & -1 & 5 & 1 & -1 & -3 & 1 & 0 &    &    &    &    &    &    &    &   \\
   & 0 & 1 & -4 & 19 & 4 & -3 & -10 & 3 & 1 & 0 &    &    &    &    &    &    &   \\
   &    & 0 & 1 & -5 & -1 & 1 & 3 & -1 & 0 & 1 & 0 &    &    &    &    &    &   \\
   &    &    & 0 & 1 & 0 & -1 & -2 & 1 & -1 & -4 & 1 & 0 &    &    &    &    &   \\
   &    &    &    & 0 & 1 & 4 & 7 & -4 & 5 & 19 & -5 & 1 & 0 &    &    &    &   \\
   &    &    &    &    & 0 & 1 & 2 & -1 & 1 & 4 & -1 & 0 & 1 & 0 &    &    &   \\
   &    &    &    &    &    & 0 & 1 & 0 & -1 & -3 & 1 & -1 & 4 & 1 & 0 &    &   \\
   &    &    &    &    &    &    & 0 & 1 & -3 & -10 & 3 & -2 & 7 & 2 & 1 & 0 &   \\
   &    &    &    &    &    &    &    & 0 & 1 & 3 & -1 & 1 & -4 & -1 & 0 & 1 & 0
\end{array} \]
\item $m$ even, no $1$ and no $0$ in the quiddity cycle:
\[ \begin{array}{rrrrrrrrrrrrrrrr}
0 & 1 & -1 & -3 & 10 & -7 & -3 & 1 & 0 &    &    &    &    &    &    &   \\
   & 0 & 1 & 2 & -7 & 5 & 2 & -1 & 1 & 0 &    &    &    &    &    &   \\
   &    & 0 & 1 & -3 & 2 & 1 & 0 & -1 & 1 & 0 &    &    &    &    &   \\
   &    &    & 0 & 1 & -1 & 0 & 1 & -3 & 2 & 1 & 0 &    &    &    &   \\
   &    &    &    & 0 & 1 & -1 & -3 & 10 & -7 & -3 & 1 & 0 &    &    &   \\
   &    &    &    &    & 0 & 1 & 2 & -7 & 5 & 2 & -1 & 1 & 0 &    &   \\
   &    &    &    &    &    & 0 & 1 & -3 & 2 & 1 & 0 & -1 & 1 & 0 &   \\
   &    &    &    &    &    &    & 0 & 1 & -1 & 0 & 1 & -3 & 2 & 1 & 0
\end{array} \]
\end{enumerate}
\end{examp}

\section{Combinatorial model}
\label{sec:combmodel}

\begin{defin} \label{def:labelling}
For $m\in \mathbb{N}_{\ge 2}$, let $T$ be a triangulation of a regular $m$-gon. A {\em labelling} of $T$ is an assignment 
of integers $a_t$, called labels, to the triangles $t$ of $T$.
Let $d$ be the sum of the number of negative labels and half the number of labels $0$. If
$d$ is an integer, we call $(-1)^d$ the \emph{sign} of the labelling.
A labelling is called {\em admissible} if 
the following conditions are satisfied:
\begin{enumerate}
\item[{(i)}] The set of triangles $t$ with $a_t\not\in \{1,-1\}$ can be written as a disjoint union of two-element
subsets $\{t_1,t_2\}$ (called \emph{squares}) such that $t_1$ and $t_2$ have a common edge (i.e.\ are neighbouring triangles)
and $a_{t_1}=-a_{t_2}$.
\item[{(ii)}] The sign is $1$, i.e.\ the sum of the number of negative labels and half the number of labels $0$ is even.
\end{enumerate}
\end{defin}
Note that by condition (i), the sign is defined for any admissible labelling.

\begin{figure}
\begin{center}
\begin{minipage}[b]{0.5\textwidth}
\vspace{20pt}
\includegraphics[width=\textwidth]{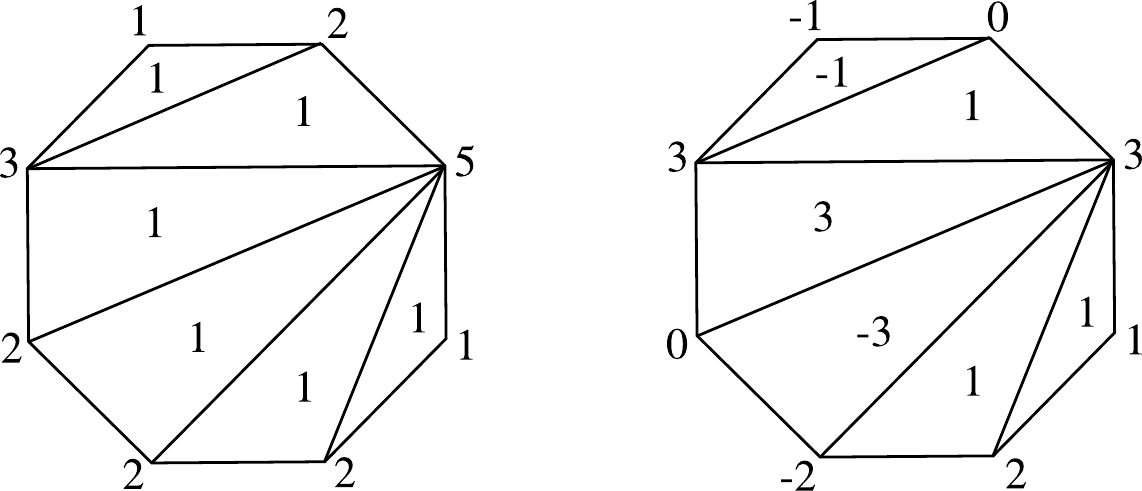}
\end{minipage}
\end{center}
\caption{Two examples for admissible labellings with their quiddity cycles as defined in Theorem \ref{theor:labelquid}\label{excZ}}
\end{figure}
\begin{examp}
Figure \ref{excZ} shows two examples of admissible labellings of a triangulation. The corresponding friezes defined by the quiddity cycles as given in Theorem \ref{theor:labelquid} are:
\[
\begin{array}{rrrrrrrrrrrrrrrr}
0 & 1 & 1 & 2 & 3 & 4 & 5 & 1 & 0 &    &    &    &    &    &    &   \\
   & 0 & 1 & 3 & 5 & 7 & 9 & 2 & 1 & 0 &    &    &    &    &    &   \\
   &    & 0 & 1 & 2 & 3 & 4 & 1 & 1 & 1 & 0 &    &    &    &    &   \\
   &    &    & 0 & 1 & 2 & 3 & 1 & 2 & 3 & 1 & 0 &    &    &    &   \\
   &    &    &    & 0 & 1 & 2 & 1 & 3 & 5 & 2 & 1 & 0 &    &    &   \\
   &    &    &    &    & 0 & 1 & 1 & 4 & 7 & 3 & 2 & 1 & 0 &    &   \\
   &    &    &    &    &    & 0 & 1 & 5 & 9 & 4 & 3 & 2 & 1 & 0 &   \\
   &    &    &    &    &    &    & 0 & 1 & 2 & 1 & 1 & 1 & 1 & 1 & 0
\end{array}
\]
\[
\begin{array}{rrrrrrrrrrrrrrrr}
0 & 1 & -1 & -4 & 1 & 2 & 3 & 1 & 0 &    &    &    &    &    &    &   \\
   & 0 & 1 & 3 & -1 & -1 & -1 & 0 & 1 & 0 &    &    &    &    &    &   \\
   &    & 0 & 1 & 0 & -1 & -2 & -1 & -1 & 1 & 0 &    &    &    &    &   \\
   &    &    & 0 & 1 & -2 & -5 & -3 & -4 & 3 & 1 & 0 &    &    &    &   \\
   &    &    &    & 0 & 1 & 2 & 1 & 1 & -1 & 0 & 1 & 0 &    &    &   \\
   &    &    &    &    & 0 & 1 & 1 & 2 & -1 & -1 & -2 & 1 & 0 &    &   \\
   &    &    &    &    &    & 0 & 1 & 3 & -1 & -2 & -5 & 2 & 1 & 0 &   \\
   &    &    &    &    &    &    & 0 & 1 & 0 & -1 & -3 & 1 & 1 & 1 & 0
\end{array}
\]
\end{examp}

\begin{theor} \label{theor:labelquid}
\begin{enumerate}
\item[{(a)}] 
Let $T$ be a triangulation of a regular $m$-gon with vertices denoted (in counterclockwise order) 
$1,2,\ldots, m$, and assume that we have an
admissible labelling of $T$. For each vertex $i$ let $c_i$ be the sum of the labels 
of the triangles attached at the vertex $i$. Then $(c_1,\ldots,c_{m})$ is a quiddity cycle over $\mathbb{Z}$
(in the sense of Definition \ref{def:quiddity}).
\item[{(b)}] Every quiddity cycle over $\mathbb{Z}$ can be obtained as in (a) from an admissible labelling.
\end{enumerate}
\end{theor}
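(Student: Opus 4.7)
Both parts proceed by induction on $m$. For \textbf{part (a)} it is convenient to prove the slightly stronger statement that \emph{any} labelling satisfying only condition (i) of Def.\ \ref{def:labelling} yields $\prod_{i=1}^m \eta(c_i) = -(-1)^d\, I$, where $d$ is the integer of Def.\ \ref{def:labelling}. The admissibility condition (ii) ($d$ even) then specializes this to $\prod \eta(c_i) = -I$, i.e.\ a quiddity cycle. The point of the strengthening is that the natural inductive reductions (removing an ear, or ``eating'' a square) do not all individually preserve admissibility, but the factor $(-1)^d$ is exactly what is tracked by the various sign changes appearing in the matrix identities of Section \ref{sec:transform}.

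The base cases $m\in\{2,3\}$ are immediate: the empty labelling of a $2$-gon gives cycle $(0,0)$, and the triangle with label $\pm 1$ gives cycle $(\pm 1,\pm 1,\pm 1)$; both sides of the formula match. For $m\ge 4$ the triangulation has at least two ears. I pick an ear $t$ at vertex $v$ and split into cases by $a_t$. If $a_t=+1$, then $c_v=1$, and removing $t$ gives an $(m-1)$-gon labelling with $d$ unchanged; Corollary \ref{eta_rule}(a) identifies the two matrix products and induction closes. If $a_t=-1$, the removal decreases $d$ by $1$, and Corollary \ref{eta_rule}(b) contributes the compensating factor $-1$. If $a_t\notin\{\pm 1\}$, then $t$ is in a square $\{t,t'\}$ with $a_{t'}=-a_t$; I remove both triangles together with $v$ (and, if its opposite exclusive vertex is also interior to the square, also that vertex). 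The associated matrix identity combines equation (\ref{eqa0b}) — applicable because the $c$-value at a square-interior vertex is $0$ — with transformations from Lemma \ref{auvb}, and in each subcase the change in $d$ matches the sign appearing in the composed identity.

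For \textbf{part (b)}, I use Theorem \ref{Zfriezered} inductively, with Case (0) providing the base. For each reduction case (1)--(5), I construct the inverse operation on labelled triangulations: Case (1) (resp.\ (3)) lifts by attaching a $+1$-labelled (resp.\ $-1$-labelled) ear at position $k$; Cases (2), (4), (5) lift by inserting a square (i.e.\ a pair of adjacent triangles with opposite labels) at the appropriate position, with labels $0,0$ for Case (2) and suitable nonzero $\pm a$ for the others, distinguishing further by whether the two reduction indices are adjacent on the cycle. Each insertion is designed so that the vertex sums at neighbouring vertices pick up exactly the corrections $\pm 1$, $c_{k-1}+c_{k+1}$, etc.\ prescribed by the respective case of Theorem \ref{Zfriezered}. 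The parity conditions attached to Cases (2), (3), (5) (e.g.\ Case (2) requires $m$ odd, Case (3) requires $m$ even) match exactly the parity of $d$ under the insertion, which ensures that the resulting labelling is admissible (not merely condition (i)).

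\textbf{Main obstacle.} The delicate step in both parts is the sign/parity bookkeeping around squares. In part (a), when every ear lies in a square one must verify that the combinatorial removal of the square (including any local re-triangulation forced by the fact that the partner's opposite vertex $u$ need not itself be an ear) leaves a smaller labelling still satisfying condition (i) and with $d$ changed by exactly the amount reflected in the composed matrix identity. In part (b), the inversions of Cases (2), (3), (5) implicitly invert the label negation built into Theorem \ref{Zfriezered}, and it is precisely the parity hypotheses on $m$ in that theorem that guarantee the inserted labelling has $d$ even.
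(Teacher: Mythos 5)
Your strategy for part (a) --- proving for every labelling satisfying condition (i) of Definition \ref{def:labelling} that $\prod_{i=1}^m\eta(c_i)$ equals $(-1)^{d+1}$ times the identity matrix, and tracking $d$ against the sign changes in Proposition \ref{eta_rule} and Equation (\ref{eqa0b}) --- is exactly the bookkeeping of the paper's proof, which phrases it as gluing building blocks onto a $2$-gon rather than peeling them off. But your inductive step has a genuine gap at the point you yourself flag: you peel an \emph{arbitrary} ear $t$ at a vertex $v$, and when $a_t\notin\{1,-1\}$ you remove its square $\{t,t'\}$. If the partner $t'=(v-1,v+1,u)$ is an interior triangle (both edges $(v-1,u)$ and $(v+1,u)$ are diagonals), then: the only vertices lying on both triangles of the square are $v-1$ and $v+1$, and these may support further triangles, so no vertex sum is forced to vanish --- your claim that ``the $c$-value at a square-interior vertex is $0$'' fails and Equation (\ref{eqa0b}) is not applicable; Equation (\ref{eqauvb}) is no substitute over $R=\ZZ$, since it requires $uv-1$ to be invertible, i.e.\ equal to $\pm 1$; and removing the two triangles disconnects the remaining triangulation into two polygons meeting only at $u$, so the induction hypothesis (stated for a triangulation of a single polygon) does not apply, while ``local re-triangulation'' has no reason to be compatible with the labelling. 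The missing idea is to peel \emph{leaf blocks} rather than arbitrary ears: condition (i) partitions the triangles into singletons and squares; contracting each square in the dual tree of $T$ gives a tree of blocks, which has a leaf. A leaf block is either an ear labelled $\pm 1$ (your first two cases) or a square meeting the rest of $T$ in a single diagonal; in the latter case the two vertices private to the square carry the sums $\pm c$ and $0$, Equation (\ref{eqa0b}) applies verbatim, and removal leaves a triangulated $(m-2)$-gon satisfying (i) with $d$ decreased by $1$. This is precisely the paper's gluing argument read backwards.

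Part (b) follows the same route as the paper (invert the reductions of Theorem \ref{Zfriezered}, starting from the $2$-gon and the cycle $(0,0)$), and the parity bookkeeping you point to is indeed where the hypotheses ``$m$ odd/even'' enter; but two of your lifts are misstated. For case (\ref{T2}) the inserted square cannot carry labels $0,0$: to split the merged entry back into $c_{k-1},0,c_{k+1}$ the square must be labelled $\pm c_{k+1}$ (or $\pm c_{k-1}$), and one must additionally negate all labels of the smaller triangulation to undo the negation built into the reduction --- it is this negation (which flips the sign exactly when $m$ is odd, a triangulated $(m-2)$-gon having $m-4$ triangles) together with the sign change from the square insertion that restores admissibility. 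And case (\ref{T5}) deletes two entries $-1$ at non-adjacent positions, so its lift attaches two triangles labelled $-1$ (it is case (\ref{T4}) that inserts two squares), and it carries no parity hypothesis. These are repairable slips within the correct strategy, but as written the insertions do not reproduce the original quiddity cycle.
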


\begin{proof}
(b) By Theorem \ref{Zfriezered}, there is a (not necessarily unique) sequence of transformations which reduces a quiddity cycle over $\mathbb{Z}$ to $(0,0)$ or $(1,1,1)$. Actually, $(0,0)$ suffices since $(1,1,1)$ can be reduced to $(0,0)$ by Corollary \ref{eta_rule} (a).
We translate each of these transformations into combinatorial pictures and obtain the figures \ref{FT1}, \ref{FT2}, \ref{FT3}, \ref{FT4}, and \ref{FT5}. (We write ``$\cdot (-1)$'' into the polygon to indicate that all entries are multiplied by $-1$.) To obtain an admissible labelling for a given quiddity cycle, just apply such a sequence of transformations in the reverse ordering to the $2$-gon.\\
(a) Let $(a_t)_{t\in T}$ be an admissible labelling of $T$.
This may be constructed inductively (although not uniquely) by successively gluing building blocks, i.e.\ triangles with entries $1$ or $-1$ and squares containing $c,-c$ for some values $c$. We start with the ``triangulation'' of the $2$-gon and labels $0,0$ at the vertices; this is admissible and gives the quiddity cycle $(0,0)$.
In each step, we glue a building block at an edge $(i,i+1)$ of a triangulation with vertices labelled as above (sum of the labels of the triangles), say $a=c_i$, $b=c_{i+1}$. For each type of block, this produces a new sequence of labels at the vertices. However, if the sign of the obtained labelling is $\varepsilon$, then the product of matrices of the form $\eta(\cdot)$ of the labels at the vertices is $\varepsilon$ times the identity matrix, thus the labels are a quiddity cycle only if $\varepsilon=1$:
\begin{itemize}[leftmargin=15pt]
\item
If we glue a triangle with a $1$: $(\ldots,a,b,\ldots)$ becomes $(\ldots,a+1,1,b+1,\ldots)$ which is compatible with $\eta(a)\eta(b)=\eta(a+1)\eta(1)\eta(b+1)$ (Corollary \ref{eta_rule}), the sign remains the same.
\item
If we glue a triangle with a $-1$: $(\ldots,a,b,\ldots)$ becomes $(\ldots,a-1,-1,b-1,\ldots)$.
Here the sign changes: Corollary \ref{eta_rule} tells us $\eta(a)\eta(b)=-\eta(a-1)\eta(-1)\eta(b-1)$.
\item
If we glue a square containing $c,-c$: $(\ldots,a,b,\ldots)$ becomes $(\ldots,a,c,0,b-c,\ldots)$ or $(\ldots,a-c,0,c,b,\ldots)$.
Here the sign changes as well: Equation (\ref{eqa0b}) tells us
$\eta(a)\eta(c)\eta(0)\eta(b-c) = -\eta(a)\eta(b) =  \eta(a-c)\eta(0)\eta(c)\eta(b)$.
\end{itemize}
In any case, the sign of the labelling is $1$ if and only if the obtained sequence is a quiddity cycle.
\end{proof}

\begin{figure}
\begin{center}
\begin{minipage}[b]{0.7\textwidth}
\vspace{20pt}
\includegraphics[width=\textwidth]{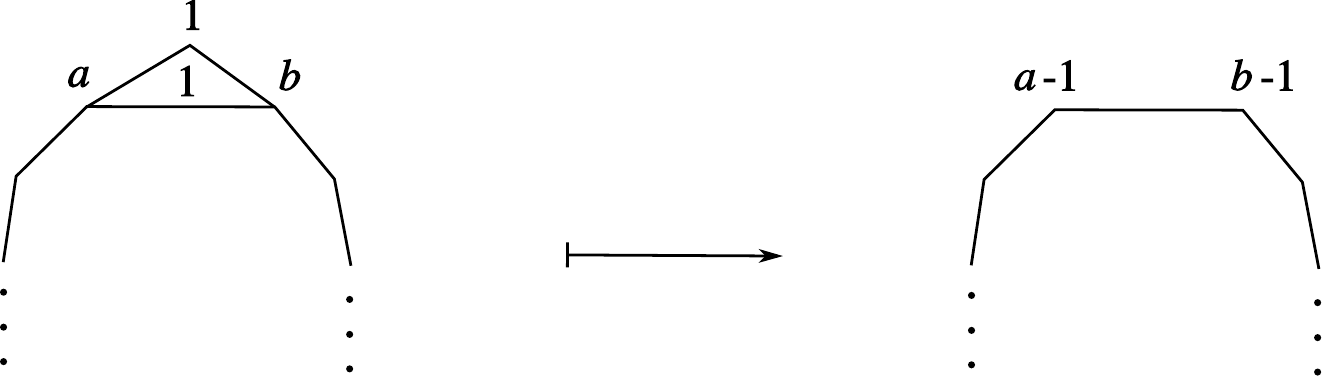}
\end{minipage}
\end{center}
\caption{Theorem \ref{Zfriezered}, case (\ref{T1}) \label{FT1}}
\end{figure}
\begin{figure}
\begin{center}
\begin{minipage}[b]{0.7\textwidth}
\includegraphics[width=\textwidth]{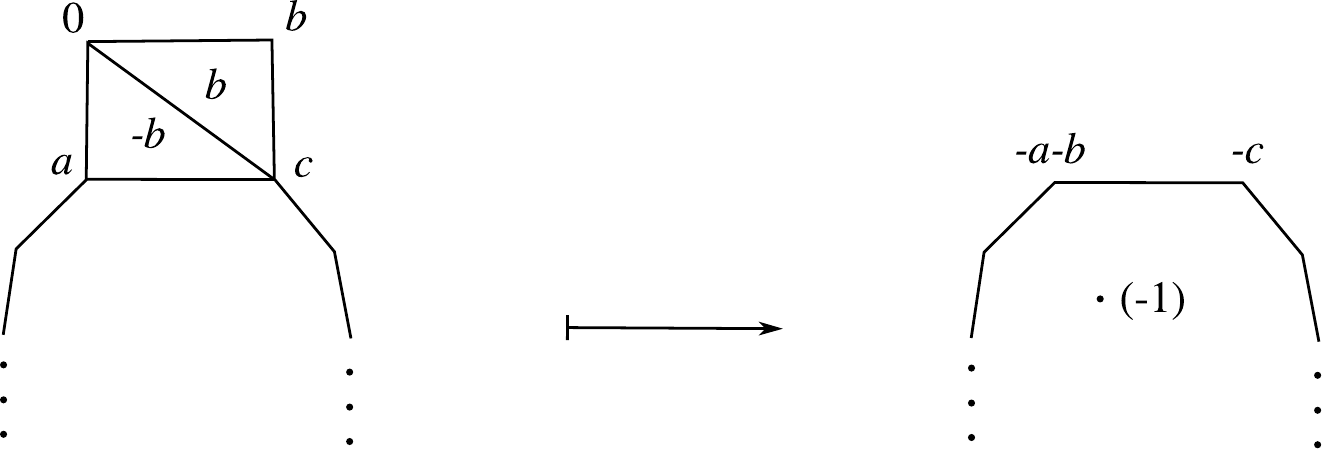}
\end{minipage}
\end{center}
\caption{Theorem \ref{Zfriezered}, case (\ref{T2}) ($m$ odd) \label{FT2}}
\end{figure}
\begin{figure}
\begin{center}
\begin{minipage}[b]{0.7\textwidth}
\includegraphics[width=\textwidth]{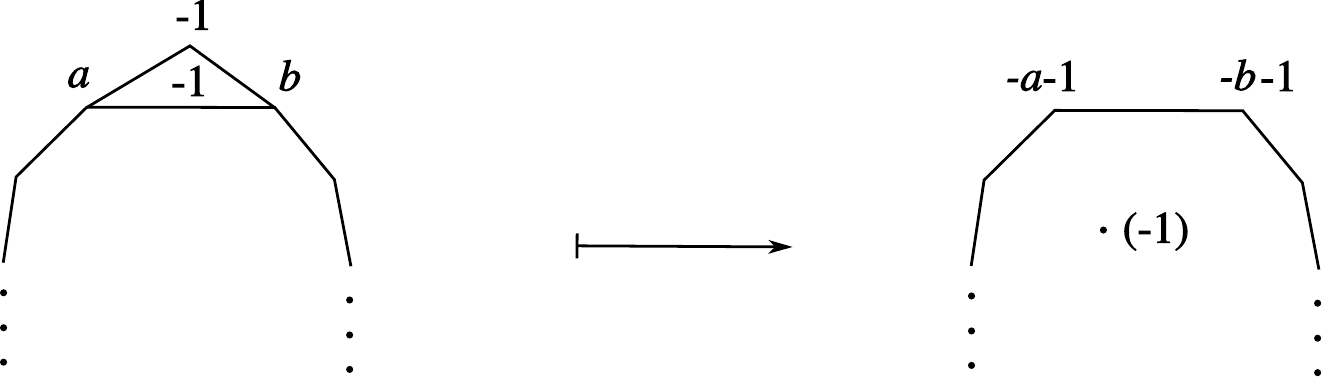}
\end{minipage}
\end{center}
\caption{Theorem \ref{Zfriezered}, case (\ref{T3}) ($m$ even) \label{FT3}}
\end{figure}
\begin{figure}
\begin{center}
\begin{minipage}[b]{0.7\textwidth}
\includegraphics[width=\textwidth]{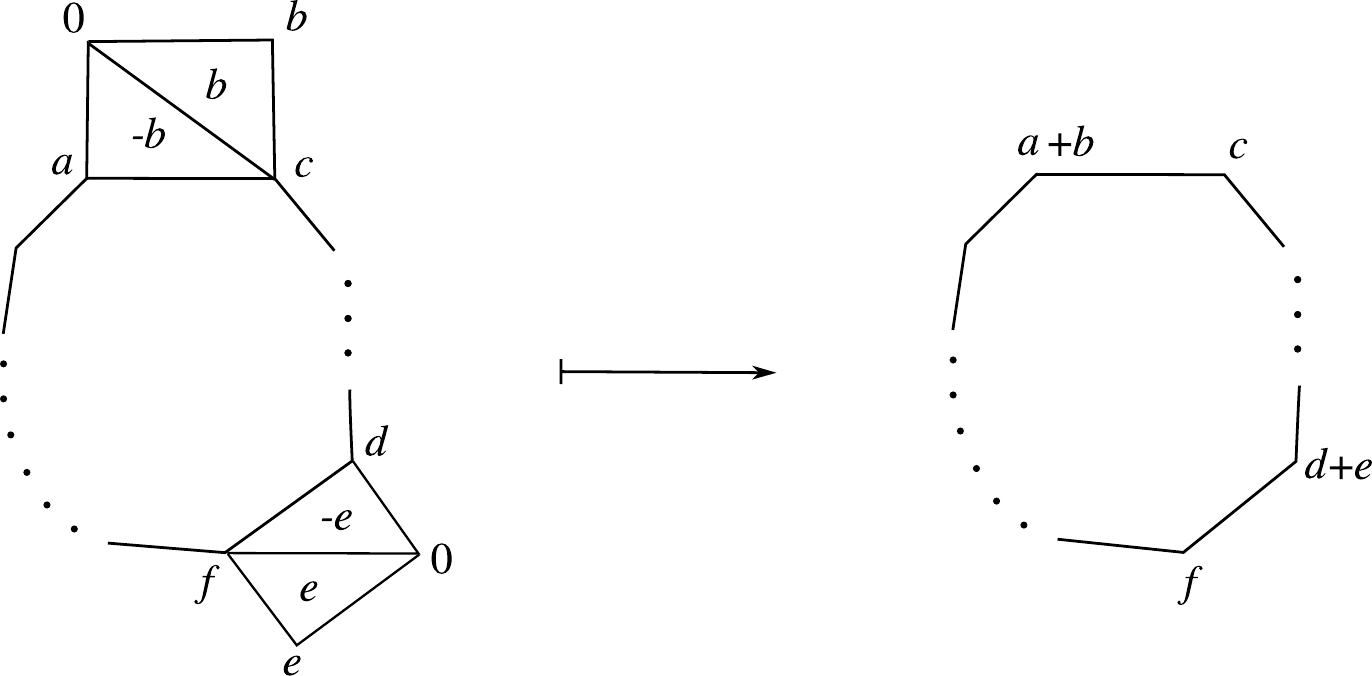}
\end{minipage}
\end{center}
\caption{Theorem \ref{Zfriezered}, case (\ref{T4}) \label{FT4}}
\end{figure}
\begin{figure}
\begin{center}
\begin{minipage}[b]{0.7\textwidth}
\includegraphics[width=\textwidth]{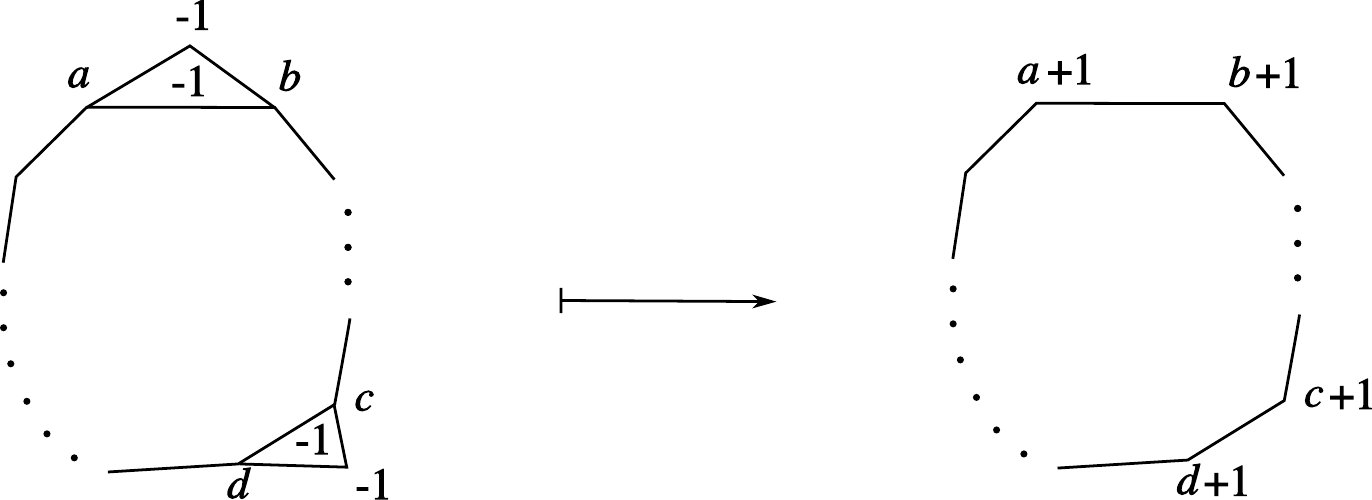}
\end{minipage}
\end{center}
\caption{Theorem \ref{Zfriezered}, case (\ref{T5}) ($m$ odd) \label{FT5}}
\end{figure}

\begin{remar}
(1) The construction described above directly generalizes the classic case of Conway-Coxeter frieze patterns 
\cite{CC73}.
In fact, in the classic case, the quiddity cycle of a frieze pattern over $\mathbb{N}$ is determined from the 
corresponding triangulation by counting the number of triangles attached to each vertex. In other words,
in the classic case one only considers admissible labellings where each triangle is labelled by 1.
\smallskip

\noindent
(2) Fontaine \cite{F14} has described frieze patterns over $\mathbb{Z}\setminus \{0\}$. His main theorem 
in this direction states that for even height $n$ each such frieze pattern is a Conway-Coxeter frieze pattern, and for
$n$ odd there are twice as many frieze patterns, obtained from the Conway-Coxeter frieze patterns by 
multiplying each second row by $-1$. In particular, in the latter case, the quiddity cycle gets multiplied
by $-1$. So in the above language of labellings this means that for $n$ odd the labellings for the 
new frieze patterns are constantly $-1$ on each triangle. (Note that for $n$ odd, the corresponding triangulation
of the $(n+3)$-gon has $n+1$ triangles; this is an even number, so condition (ii) of Definition \ref{def:labelling}
is satisfied and this labelling is admissible.)   
\smallskip

\noindent
(3) Theorem \ref{theor:labelquid} shows that in addition to the frieze patterns considered by Conway-Coxeter
and Fontaine there are infintely many other tame frieze patterns over $\mathbb{Z}$ (but these would all have at least
one zero entry). More precisely, there are four tame frieze patterns of height 1 over $\mathbb{Z}$ (all 
without zero entry). For every height $n\ge 2$ there are infinitely many tame frieze patterns over $\mathbb{Z}$.  In fact,
every regular $(n+3)$-gon for $n\ge 2$ can be given an admissible triangulation with two neighbouring triangles labelled 
by $\pm a\in \mathbb{Z}$. Then Theorem \ref{theor:labelquid} yields infinitely many quiddity cycles over $\mathbb{Z}$
of length $n+3$. Then Proposition \ref{perck}) gives infinitely many corresponding tame frieze
patterns over $\mathbb{Z}$ of height $n$. 
\smallskip

\noindent
(4) In \cite{Ovs17}, Ovsienko proves that there is a bijection between quiddity cycles with entries in $\mathbb{N}$ 
and $3d$-dissections, i.e. triangulations of regular polygons into subpolygons whose numbers of vertices are 
multiples of 3. This nicely generalizes the Conway-Coxeter bijection between quiddity cycles of frieze patterns over 
$\mathbb{N}$ and triangulations of polygons. Our Theorem \ref{theor:labelquid} yields all quiddity cycles over
$\mathbb{Z}$, via admissible labellings of triangulations. For the subset of quiddity cycles over $\mathbb{N}$ it is
not hard to show how the different combinatorial models, $3d$-dissections and admissible labellings are related, that is
how to obtain an admissible labelling from any $3d$-dissection. Since a precise description needs some technicalities
we do not address this issue here further.   
\smallskip

\noindent
(5) Theorem \ref{theor:labelquid} provides a combinatorial model for producing all quiddity cycles over 
$\mathbb{Z}$. Unfortunately, in this way 
one does not get a bijection between quiddity cycles and admissible labellings
of triangulations. There exist different admissible labellings giving the same quiddity cycle, as the following 
hexagonal example shows; for any $a\in \mathbb{Z}$, 
both admissible labellings yield the quiddity cycle $(1,1,-a,-1,-1,a)$ (starting from the 
top left vertex).  
\begin{center}
\begin{figure}[H]
  \centering
  \begin{minipage}[b]{0.4\textwidth}
   \begin{tikzpicture}[auto]
    \node[name=s, draw, shape=regular polygon, regular polygon sides=6, minimum size=3.0cm] {};
    \draw[thick] (s.corner 2) to (s.corner 4);
    \draw[thick] (s.corner 6) to (s.corner 2);
    \draw[thick] (s.corner 6) to (s.corner 4);
   \draw[shift=(s.corner 1)]  node[label={[shift={(-0.2,-0.7)}]{\small $a$}}]  {};
  \draw[shift=(s.corner 3)]  node[label={[shift={(0.5,-0.4)}]{\small $1$}}]  {};
  \draw[shift=(s.corner 5)]  node[label={[shift={(-0.12,0.06)}]{\small $-1$}}]  {};
  \draw[shift=(s.corner 6)]  node[label={[shift={(-1.5,-0.4)}]{\small $-a$}}]  {};
   \end{tikzpicture}
   \end{minipage}
   \begin{minipage}[b]{0.4\textwidth}
   \begin{tikzpicture}[auto]
    \node[name=s, draw, shape=regular polygon, regular polygon sides=6, minimum size=3.0cm] {};
    \draw[thick] (s.corner 1) to (s.corner 3);
    \draw[thick] (s.corner 6) to (s.corner 3);
    \draw[thick] (s.corner 6) to (s.corner 4);
   \draw[shift=(s.corner 1)]  node[label={[shift={(-0.2,-1.1)}]{\small $a-1$}}]  {};
  \draw[shift=(s.corner 2)]  node[label={[shift={(0.1,-0.7)}]{\small $1$}}]  {};
  \draw[shift=(s.corner 3)]  node[label={[shift={(1.0,-0.8)}]{\small $-a+1$}}]  {};
  \draw[shift=(s.corner 5)]  node[label={[shift={(-0.12,0.06)}]{\small $-1$}}] {} ;
   \end{tikzpicture}
   \end{minipage}
\end{figure}
\end{center}
It seems to be tricky to describe combinatorially precisely when two admissible labellings give the same
quiddity cycle.   
\end{remar}

We now prove an analogue to Theorem \ref{Zfriezered} for the combinatorial model.
The following lemma is the key to obtain all required reductions.

\begin{lemma}\label{L12131}
Let $\underline{c} = (c_1,...,c_m)$, $m>3$ be a quiddity cycle of a triangulation, i.e.\ the quiddity cycle of the Conway-Coxeter frieze pattern of this triangulation. Then $\underline{c}$ contains two disjoint subsequences $(1,2)$ or $(2,1)$, or it contains the subsequence $(1,3,1)$.
\end{lemma}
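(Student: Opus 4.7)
My plan is to argue via the dual tree $T^*$ of the Conway--Coxeter triangulation: its vertices are the $m-2$ triangles, edges connect triangles sharing a diagonal, every vertex has degree at most $3$ (a triangle has only three edges), and its leaves correspond precisely to entries $c_i = 1$. Writing $\epsilon$ for the number of leaves (ears), a standard tree count (from $\sum_v(\deg_v-2)=-2$) gives exactly $\epsilon - 2$ branching vertices (triangles of degree $3$ in $T^*$).

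First I would establish two preliminary facts valid for $m \geq 5$: (a) two ears are never cyclically adjacent, and (b) neither $(1,2,1)$ nor $(2,1,2)$ can occur as a consecutive subsequence. Both follow from short edge counts at a single vertex, and (b) implies that any two distinct $(1,2)$ or $(2,1)$ subsequences of $\underline{c}$ are automatically disjoint when $m \geq 5$. The crucial local analysis is then: for each ear $e$, the unique triangle $T_2^{(e)} = (e-1, e+1, j)$ sharing the diagonal $(e-1, e+1)$ with the ear-triangle satisfies the dichotomy
\begin{itemize}
\item $j = e-2$: the edge $(e-2,e-1)$ of $T_2^{(e)}$ is a boundary edge, $T_2^{(e)}$ has degree $2$ in $T^*$ and $(c_{e-1}, c_e) = (2, 1)$;
\item $j = e+2$: symmetrically, $T_2^{(e)}$ has degree $2$ and $(c_e, c_{e+1}) = (1, 2)$;
\item $j \notin \{e-2, e+2\}$: all three edges of $T_2^{(e)}$ are diagonals, so $T_2^{(e)}$ is a branching triangle and $c_{e-1}, c_{e+1} \geq 3$.
\end{itemize}
So ear $e$ produces an adjacent $(1,2)/(2,1)$ subsequence if and only if $T_2^{(e)}$ has degree $2$ in $T^*$.

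After handling $m=4,5$ by direct inspection (the quiddities are $(2,1,2,1)$ and rotations of $(3,1,2,2,1)$), for $m \geq 6$ I would split into two cases. If $\epsilon = 2$ then $T^*$ is a path and both $T_2^{(e)}$ are interior vertices of degree $2$, giving two $(1,2)/(2,1)$ subsequences, disjoint by (b). If $\epsilon \geq 3$ then $T^*$ has a branching triangle $T$; should some branching triangle possess two ear-neighbors $e_1, e_2$, the dichotomy forces $e_1, e_2$ to lie at cyclic distance $2$ and the vertex between them to have exactly three incident triangles (namely $T$ together with the two ear-triangles), producing a $(1,3,1)$ subsequence.

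Otherwise every branching triangle has at most one ear-neighbor, so the assignment $e \mapsto T_2^{(e)}$ is injective on the ears whose $T_2^{(e)}$ is branching. Suppose, for contradiction, that $\underline{c}$ contains at most one $(1,2)/(2,1)$ subsequence. Then at least $\epsilon - 1$ ears have both $c$-neighbors $\geq 3$, and by the dichotomy each such $T_2^{(e)}$ is branching; injectivity then yields $\epsilon - 1 \leq \epsilon - 2$, a contradiction. Hence $\underline{c}$ contains at least two distinct, and therefore disjoint, $(1,2)/(2,1)$ subsequences. The main obstacle I anticipate is verifying the geometric dichotomy for $T_2^{(e)}$ with full care, since the three cases hinge on when particular diagonals degenerate to boundary edges as indices wrap modulo $m$; this is precisely where the small cases require the separate treatment in the base step.
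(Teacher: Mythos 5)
Your argument is correct, but it follows a genuinely different route from the paper's. The paper assumes $(1,3,1)$ does not occur and then works entirely inside the quiddity-cycle calculus: it contracts simultaneously all entries $1$ that are flanked by entries $>2$ (using the relation $\eta(a)\eta(b)=\eta(a+1)\eta(1)\eta(b+1)$ of Proposition \ref{eta_rule}), observes that in the shortened quiddity cycle $\underline{c}'$ every $1$ must have a neighbouring $2$ and that there are at least two $1$'s, extracts either $\underline{c}'=(1,2,1,2)$ or two disjoint occurrences of $(1,2)/(2,1)$, and finally lifts these back to $\underline{c}$, the absence of $(1,3,1)$ guaranteeing that the relevant $2$'s were not created by the contraction. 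You instead argue geometrically on the triangulation itself: leaves of the dual tree are the ears (entries $1$), the identity $\sum_v(\deg v-2)=-2$ gives exactly $\epsilon-2$ branching triangles, and your local dichotomy at each ear (the second triangle $T_2^{(e)}$ has degree $2$, yielding an adjacent $(1,2)$ or $(2,1)$, or degree $3$, forcing both neighbours of the ear to be $\ge 3$) turns the conclusion into a pigeonhole count: with no $(1,3,1)$ the map $e\mapsto T_2^{(e)}$ is injective on ears flanked by $\ge 3$'s, so at most one occurrence of $(1,2)/(2,1)$ would give $\epsilon-1\le\epsilon-2$. Both proofs are elementary; the paper's stays within the reduction machinery it has already set up and needs no geometric case analysis, while yours is self-contained on the combinatorial side, gives the sharper structural fact that each ``isolated'' ear consumes its own branching triangle, and makes the role of $(1,3,1)$ (two leaves hanging on one branching triangle) transparent. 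Two small points to tighten in a full write-up: the claim that a branching triangle with two ear-neighbours forces the ears to be at cyclic distance $2$ is not literally the dichotomy but a separate (easy) argument from the fact that a triangle has only three vertices, all containing $e_i\pm1$; and you should note explicitly that for $m\ge 5$ the triangle $T_2^{(e)}$ cannot have degree $1$, so the dichotomy is exhaustive.
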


\begin{proof}
Assume that $(1,3,1)$ is not a subsequence of $\underline{c}$.
Now consider all subsequences in $\underline{c}$ of the form $(a,1,b)$ with $a,b>2$. If $(a,1,b,1,d)$ with $a,b,d>2$ is a subsequence, then $b>3$ by assumption. Thus we may apply Corollary \ref{eta_rule} (a) to all these subsequences simultaneously and obtain a new quiddity cycle $\underline{c}'$ in which $(a,1,b)$ with $a,b>2$ is never a subsequence;
this $\underline{c}'$ has length at least $4$ because it contains at least one entry bigger than $1$.
In particular, $(1,1)$ is not a subsequence of $\underline{c}'$.

Hence every $1$ in $\underline{c}'$ has a neighbouring $2$, and we have at least two $1$'s since the length of $\underline{c}'$ is greater than $3$.
If the $1$'s are at positions $k$ and $k+2$ and the neighbouring $2$'s are both at $k+1$, i.e.\ $(1,2,1)$ is a subsequence, then $\underline{c}'=(1,2,1,2)$. Otherwise we find two disjoint subsequences in $\underline{c}'$.
Now we go back to $\underline{c}$: including the $1$'s we have removed, the disjoint subsequences found in $\underline{c}'$ remain, since $(1,3,1)$ is not a subsequence of $\underline{c}$.
\end{proof}

Given a triangulation of a polygon, a vertex of the polygon is called an \emph{ear}, if it is attached to only one triangle.

The following theorem is a combinatorial reformulation of Theorem \ref{Zfriezered}.

\begin{theor}
Let $T$ be a triangulation of a regular $m$-gon with an admissible labelling.
Then we have at least one of the following cases:
\begin{enumerate}\setcounter{enumi}{-1}
\item\label{TC0} $m<4$ and the labelling is either empty or it consists of one entry equal to $1$.
\item\label{TC1} There exists an index $k$ such that $(k-1,k,k+1)$ is a triangle labelled $1$.
Removing this triangle yields an admissible labelling of a triangulation of an $(m-1)$-gon.
\item\label{TC2} There exists an index $k$ such that $(k-1,k,k+1,k+2)$ is a square labelled $(c,-c)$ and $m$ is odd.
Removing this square and multiplying each entry by $-1$ yields an admissible labelling of a triangulation of an $(m-2)$-gon.
\item\label{TC3} There exists an index $k$ such that $(k-1,k,k+1)$ is a triangle labelled $-1$ and $m$ is even.
Removing this triangle and multiplying each entry by $-1$ yields an admissible labelling of a triangulation of an $(m-1)$-gon.
\item\label{TC4} There exist $j,k$ with $|j-k|>1$ and $(j-1,j,j+1,j+2)$, $(k-1,k,k+1,k+2)$ are squares labelled $(c,-c)$, $(d,-d)$.
Removing these two squares yields an admissible labelling of a triangulation of an $(m-4)$-gon.
\item\label{TC5} There exist $j,k$ with $|j-k|>1$ and $(j-1,j,j+1)$, $(k-1,k,k+1)$ are triangles labelled $-1$.
Removing these two triangles yields an admissible labelling of a triangulation of an $(m-2)$-gon.
\end{enumerate}
\end{theor}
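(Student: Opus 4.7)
The plan is a case analysis driven by Lemma \ref{L12131}, with labels analyzed via the admissibility condition.

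For $m<4$ I would enumerate admissible labellings directly: on the $2$-gon only the empty labelling qualifies; on the $3$-gon the single triangle must carry label $1$, since any other value either violates the sign condition or the requirement that labels outside $\{1,-1\}$ appear in pairs. This gives (TC0). For $m \geq 4$, I would apply Lemma \ref{L12131} to the Conway--Coxeter quiddity cycle of the underlying triangulation $T$, obtaining either two disjoint subsequences of the form $(1,2)$ or $(2,1)$, or a subsequence $(1,3,1)$. Each such subsequence pins down concrete triangles: a $(1,2)$ at positions $(k,k+1)$ forces the ear $(k-1,k,k+1)$ together with its unique neighbour $(k-1,k+1,k+2)$ to form a quadrilateral on four consecutive vertices (the case $(2,1)$ is symmetric), while $(1,3,1)$ at $(k-1,k,k+1)$ produces two adjacent ears $\Delta_\ell=(k-2,k-1,k)$ and $\Delta_r=(k,k+1,k+2)$ sharing a single middle triangle $\Delta_m=(k-2,k,k+2)$.

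The key observation is that an ear triangle has a \emph{unique} neighbour across its single internal edge, so whenever its label lies outside $\{1,-1\}$ admissibility forces its partner in the corresponding square to be exactly that neighbour. In the $(1,2)/(2,1)$ situations this produces a square on four consecutive vertices as required by (TC2)/(TC4). In the $(1,3,1)$ situation both ears share the unique neighbour $\Delta_m$, so at most one of them can be paired with $\Delta_m$; the other ear therefore has to carry a label in $\{1,-1\}$.

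From here the case-splitting is mechanical: an ear labelled $1$ yields (TC1); two disjoint ears both labelled $-1$ yield (TC5); two disjoint four-consecutive squares yield (TC4); an ear labelled $-1$ together with $m$ even yields (TC3); and a four-consecutive square together with $m$ odd yields (TC2). The main obstacle will be the residual subcase arising from a $(1,3,1)$ with $m$ odd, in which one of the two ears is labelled $-1$ and the other is paired with $\Delta_m$ into a square on four vertices that are \emph{not} consecutive. I expect to resolve this by applying Corollary \ref{cor:small_Z} to the quiddity cycle produced by the labelling to locate an additional entry of absolute value $<2$ far from the $(1,3,1)$-block; the combinatorial analysis at the corresponding vertex of $T$, together with the admissibility sign constraint, should then exhibit a second ear or four-consecutive square on which one of the remaining cases (TC1) through (TC5) applies.
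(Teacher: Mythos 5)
Your overall route is the same as the paper's: settle $m<4$ by inspection, apply Lemma \ref{L12131} to the Conway--Coxeter quiddity cycle of $T$, translate a block $(1,2)$ or $(2,1)$ into an ear together with its unique neighbour forming a quadrilateral on four consecutive vertices, translate $(1,3,1)$ into two ears sharing one middle triangle, and then split according to whether the relevant ears are labelled $1$, labelled $-1$, or paired into squares. The genuine gap is the one you flag yourself and then leave open: the subcase of a $(1,3,1)$ with $m$ odd in which one ear is labelled $-1$ and the other ear is paired with the middle triangle, so that the only square in sight is \emph{not} on four consecutive vertices and case (\ref{TC2}) does not apply to it. Your proposed repair via Corollary \ref{cor:small_Z} is not convincing: an entry of absolute value $<2$ in the quiddity cycle attached to the labelling only says that the labels of the triangles meeting some vertex sum to $-1$, $0$ or $1$; such a vertex need not be an ear and need not support any square on consecutive vertices (a vertex meeting three triangles labelled $5,-5,1$ has quiddity entry $1$), and Corollary \ref{cor:small_Z} gives no control on the position of the small entries relative to the $(1,3,1)$-block. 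So the proof is incomplete exactly at its hardest point.

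For comparison: the paper disposes of this subcase inside its case~1 very tersely, by removing either the $-1$-triangle or the square and renormalizing with a global sign change according to the parity of $m$; your unease is justified, since the square arising from the $(1,3,1)$-pentagon sits on the vertices $(k-1,k+1,k+2,k+3)$ and hence, for $m\ge 7$, is not of the consecutive form required in (\ref{TC2}), so something beyond the pentagon is indeed needed when $m$ is odd. A purely combinatorial way to close your gap: squares are pairwise disjoint by Definition \ref{def:labelling}\,(i), so distinct squared ears have distinct neighbour triangles; if the square of an ear is not on four consecutive vertices, then all three edges of its neighbour are diagonals, i.e.\ the neighbour has degree $3$ in the dual tree of $T$; since a tree with $L$ leaves has at most $L-2$ vertices of degree at least $3$, it is impossible that all ears but one are squared with non-consecutive neighbours. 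Hence, once no ear is labelled $1$ (else (\ref{TC1})) and at most one ear is labelled $-1$ (else (\ref{TC5}); two ears are never at adjacent vertices, so $|j-k|>1$ is automatic), some square must lie on four consecutive vertices, which is (\ref{TC2}) because $m$ is odd. Finally, the ``mechanical'' part still has to be done: you must verify that the reduced labellings are admissible, e.g.\ that removing a $-1$-ear and negating all labels preserves condition (ii) of Definition \ref{def:labelling} exactly when $m$ is even, and removing one square and negating exactly when $m$ is odd; this is routine sign bookkeeping, but it is part of the statement.
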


\begin{proof}
If $m<4$ then we are in case (\ref{TC0}). Thus assume that $m>3$.
The triangulation $T$ has an ear, thus if it is labelled $1$, then we have case (\ref{TC1}). Thus assume that no ear is labelled $1$.
By Lemma \ref{L12131}, we have the following cases for the quiddity cycle $q$ of the Conway-Coxeter frieze pattern of the triangulation $T$:

1. The cycle $q$ contains $(1,3,1)$, say at position $k,k+1,k+2$. If the pentagon $(k-1,k,k+1,k+2,k+3)$ has no triangle with label $1$, then either we have two ears labelled $-1$ and we are in case (\ref{TC5}), or the pentagon consists of a triangle labelled $-1$ and a square labelled $(c,-c)$, we are then in case (\ref{TC2}) or (\ref{TC3}) depending on whether $m$ is odd or even:
If $m$ is odd, remove the triangle labelled $-1$; the sign of the smaller labelling is different, but since $m$ is odd, multiplying each entry by $-1$ remedies this. If $m$ is even, remove the square; again the sign changes and is remedied by multiplying each entry by $-1$.

2. The cycle $q$ is equal to $(1,2,1,2)$. Then both triangles are labelled $-1$ because this is the only admissible labelling without a $1$, and we are in case (\ref{TC3}).

3. The cycle $q$ contains two disjoint subsequences $(1,2)$ or $(2,1)$ and $m>4$. If both ears are labelled $-1$ then we are in case (\ref{TC5}). If both subsequences belong to squares labelled $(c,-c)$, $(d,-d)$, then we are in case (\ref{TC4}).
Otherwise, we are in case (\ref{TC2}) or (\ref{TC3}) depending on whether $m$ is odd or even (same argument as in case 1 to see that the new labelling is admissible).
\end{proof}

\end{document}